\renewcommand{\@listI}%
{\leftmargin=\parindent
%\leftmargini
\partopsep=0pt
\parskip=-5pt
\topsep=2pt
\itemsep=-2pt
\labelwidth=\leftmargini }
\renewcommand{\@listii}{\setlength{\topsep}{2pt}
} \setlength{\itemsep}{3pt} \setlength{\partopsep}{-5pt}
\makeatletter \@addtoreset {equation}{section}
\newtheorem{theorem}{Theorem}[section]
\newtheorem{lemma}[theorem]{Lemma}
\newtheorem{corollary}[theorem]{Corollary}
\newtheorem{proposition}[theorem]{Proposition}
\theoremstyle{remark}
\newtheorem{remark}{Remark}[section]
\newtheorem{assumption}{Assumption}[section]
\theoremstyle{definition}
\newcommand{\const}{{\mathop{\rm const}\nolimits}}
\newcommand{\myp}{\mbox{$\:\!$}}
\newcommand{\mypp}{\mbox{$\;\!$}}
\newcommand{\myn}{\mbox{$\;\!\!$}}
\newcommand{\mynn}{\mbox{$\:\!\!$}}
\newcommand{\PP}{P}
\newcommand{\QQ}{Q}
\newcommand{\EE}{E}
\newcommand{\Var}{\mathrm{Var}}
\newcommand{\Cov}{\mathrm{Cov}}
\renewcommand{\det}{\mathop{\rm det}\nolimits}
\newcommand{\supp}{\mathop{\rm supp}\nolimits}
\newcommand{\NN}{\mathbb{N}}
\newcommand{\ZZ}{\mathbb{Z}}
\newcommand{\RR}{\mathbb{R}}
\newcommand{\CC}{{\mathbb C}}
\newcommand{\calF}{\mathcal{F}}
\newcommand{\calH}{{\mathcal H}}
\newcommand{\CP}{\varPi}
\newcommand{\calX}{\mathcal{X}}
\newcommand{\rme}{{\mathrm e}}
\newcommand{\rmi}{{\mathrm i}}
\newcommand{\dif}{{\mathrm d}}
\newcommand{\topp}{{\mbox{\tiny$\top$}}}
\renewcommand{\footnotemark}{\fnsymbol{footnote}}
\begin{document}

%\chardef\n=194

%\newcounter{lit}
%\newcommand{\nn}{\refstepcounter{lit}{[\bf\arabic{lit}]}}
%\newcommand{\sepp}{\mbox{$/\:\!\!/$}}

%\vspace{-4pt}
\begin{center}
{\LARGE\bf Inverse problem of the limit shape for convex lattice
polygonal lines

}

\vspace{.8pc} {\large Leonid V.\ Bogachev\myp$^{\rm
a,\myn}$\footnote{Supported in part by a Leverhulme Research
Fellowship.} and Sakhavat M.\
Zarbaliev\myp$^{\rm b,}$\footnote{Supported in part by DFG Grant 436 RUS 113/722.}}\\[1pc]
\small $^{\rm a}$\myp Department of Statistics, School of
Mathematics,
University of Leeds, %Woodhouse Lane,
Leeds LS2 9JT, UK.\\
E-mail: {\tt L.V.Bogachev@leeds.ac.uk}\\[.4pc]
$^{\rm b}$\myp International Institute of Earthquake Prediction
Theory
and Mathematical Geophysics,\\
%   %(IIEPT RAS)
Russian Academy of Sciences, Profsoyuznaya Str. 84/32, Moscow
117997, Russia.\\ E-mail: {\tt szarbaliev@mail.ru}
%{\tt sakhavet@mitp.ru}
\end{center}

%\footnotetext{\\[-.5pc] \mbox{}\hskip15pt Research partially supported by RFBR Grant 01-01-00755.}

%\setcounter{footnote}{0}
%\renewcommand{\thefootnote}{\arabic{footnote})}
%
%\vspace{2.0pc}

\medskip\noindent
\begin{abstract}
\noindent
It is known that random convex polygonal lines on $\ZZ_+^2$
(with the endpoints fixed at $0=(0,0)$ and $n=(n_1,n_2)\to\infty$)
have a limit shape with respect to the uniform probability measure,
identified as the parabola arc
$\sqrt{c\myp(1-x_1)}+\sqrt{x_2}=\sqrt{c}$, where $n_2/n_1\to c$. The
present paper is concerned with the inverse problem of the limit
shape. We show that for any strictly convex, $C^3$-smooth arc
$\gamma\subset \RR_+^2$ starting at the origin, there is a
probability measure $\PP_n^\gamma$ on convex polygonal lines, under
which the curve $\gamma$ is their limit shape.

\medskip \noindent \emph{Key words}: Convex lattice polygonal
lines; Limit shape; Inverse problem of limit shape; Local limit
theorem
%M\"obius inversion formula

%\vspace*{1.5ex}\noindent
% 05Dxx Extremal combinatorics

\medskip \noindent \emph{2000 MSC}:
%Mathematics Subject Classification}:
Primary 52A22; Secondary 05A17, 05D40, 60F05, 60G50 \strut

% 05A17 Partitions of integers [See also 11P81, 11P82, 11P83]
% 05D40 Probabilistic methods
%
% 52A10 Convex sets in $2$ dimensions (including convex curves) [See also 53A04]}
% 52A22 Random convex sets and integral geometry [See also 53C65, 60D05]
% 52A27 Approximation by convex sets
% 52B20 Lattice polytopes (including relations with commutative algebra and algebraic geometry) [See also 06A11, 13F20, 13Hxx]
%
% 53A04 Curves in Euclidean space
% 53A25 Differential line geometry
%
% 60F05 Central limit and other weak theorems
% 60G50 Sums of independent random variables; random walks
%
%\hfill{}

\end{abstract}

\medskip\noindent

%\vspace{3.5mm}
\vspace{.5mm}

\section{Introduction}\label{sec1}
Consider a convex lattice polygonal line $\varGamma$ with vertices
on
%the integer lattice
$\ZZ_+^2:=\{(i,j)\in\ZZ^2: i,j\ge 0\}$, starting at the origin and
such that the slope of each of its edges is nonnegative and does not
exceed the angle of $90^\circ$.
%$\pi/2$.
Convexity means that the slope of consecutive edges is strictly
increasing. Let $\CP$ be the set of all such polygonal lines with
finitely many edges, and by $\CP_{n}$ the subset of polygonal lines
$\varGamma\in\CP$ with the right endpoint fixed at
$n=(n_1,n_2)\in\ZZ_+^2$.

The \textit{limit shape}, with respect to a probability measure
$\PP_n$ on $\CP_n$ as $n\to\infty$, is understood as a planar curve
$\gamma^*$ such that, for any $\varepsilon>0$,
\begin{equation}\label{eq:LLN}
\lim_{n\to\infty}\PP_n\{\varGamma\in\CP_n:
\,d(\tilde{\varGamma_n},\gamma^*)\le\varepsilon\}=1,
\end{equation}
where $\tilde\varGamma_n=S_n(\varGamma)$, subject to a suitable
scaling $S_n:\RR^2\to\RR^2$, and $d(\cdot,\cdot)$ is some metric on
the path space, e.g., induced by the Hausdorff distance between
compact sets,
\begin{equation}\label{eq:dH}
d_{\mathcal H}(A,B):=\max\left\{\max_{x\in A}\min_{y\in B}|x-y|,
\,\max_{y\in B}\min_{x\in A}|x-y|\right\}.
\end{equation}

Of course, the limit shape and its very existence may depend on the
probability law $\PP_n$. With respect to the uniform distribution on
$\CP_n$, the problem was solved independently by Vershik \cite{V1},
B\'ar\'any \cite{Barany} and Sinai \cite{Sinai}, who showed that if
$n_1,n_2\to\infty$ so that $n_2/n_1\to c\in(0,\infty)$, then under
the scaling $S_n:(x_1,x_2)\mapsto (x_1/n_1,x_2/n_2)$ limit
(\ref{eq:LLN}) holds with respect to the Hausdorff metric $d_\calH$
and with the limit shape $\gamma^*$ identified as a parabola arc
\begin{equation}\label{eq:gamma*}
\sqrt{c\myp(1-x_1)}+\sqrt{x_2}=\sqrt{c}, \qquad 0\le x_1,x_2\le1.
\end{equation}

Recently, Bogachev and Zarbaliev \cite{BZ4,BZ-DAN} proved that the
same limit shape (\ref{eq:gamma*}) appears for a large class of
measures $P_n$ of the form
\begin{equation}\label{eq:P-r}
\PP_n(\varGamma):= B_n^{-1} \prod_{e_i\in\varGamma} b_{k_i}
  \myp,\qquad B_n:={}\sum_{\varGamma\in\CP_n}\prod_{e_i\in\varGamma} b_{k_i}\qquad (\varGamma\in\CP_n),
\end{equation}
where the product is taken over all edges $e_i$ of
$\varGamma\in\CP_n$, $k_i$ is the number of lattice points on the
edge $e_i$ except its left endpoint, and
\begin{equation}\label{eq:b-k-r}
b_k:=\binom{r+k-1}{k}=\frac{r(r+1)\cdots(r+k-1)}{k!}\myp,\qquad
k=0,1,2,\dots.
\end{equation}
This result has provided first evidence in support of a conjecture
on the \textit{limit shape universality}, put forward independently
by Vershik \cite[p.\;20]{V1} and Prokhorov \cite{Prokhorov}. The
class of probability measures (\ref{eq:P-r}) with coefficients
(\ref{eq:b-k-r}) belongs to a general meta-type of decomposable
combinatorial structures known as \textit{multisets} (see
\cite[\S\myp2.2]{ABT}). Bogachev \cite{Bogachev} has extended the
universality result to a much wider class of multiplicative
probability measures (\ref{eq:P-r}) including the analogues of two
other well-known meta-types of decomposable structures ---
\textit{selections} and \textit{assemblies} (cf.\
\cite[\S\myp2.2]{ABT}); for example, this class includes the uniform
distribution on the subset of ``simple'' polygonal lines (i.e.,
those with no lattice points apart from vertices).

However, universality of the limit shape $\gamma^*$ given by
(\ref{eq:gamma*}) has its boundaries; indeed, in the present paper
we consider the inverse problem and show that \textit{any
$C^3$-smooth, strictly convex arc $\gamma\in\RR_+^2$} (started at
the origin) may appear as the limit shape with respect to a suitable
probability measure $\PP_n^\gamma$ on $\CP_n$, as $n\to\infty$. For
early drafts of this result (treated in terms of approximation of
convex curves by random polygonal lines), see \cite{BZ2,BZ3}.

Like in \cite{Bogachev,BZ4}, our construction employs an elegant
probabilistic approach based on randomization and conditioning (see
\cite{ABT}), first used in the polygonal context by Sinai
\cite{Sinai}. The idea is to represent a given measure $\PP_n$ on
$\CP_{n}$ as the conditional distribution,
$\PP_n(\varGamma)=\QQ(\varGamma\,|\,\CP_n)$, induced by a suitable
``global'' probability measure $\QQ$ defined on the space
$\CP=\cup_n\CP_n$ of convex lattice polygonal lines with a free
right endpoint. In turn, the measure $\QQ=\QQ_{z}$ depending on a
two-dimensional parameter $z=(z_1,z_2)$ is constructed as the
distribution of a suitable integer-valued random field
$\nu=\nu(\cdot)$ with mutually independent components, defined on
the subset $\mathcal{X}\subset\ZZ_+^2$ consisting of points
$x=(x_1,x_2)\in\ZZ_+^2$ with co-prime coordinates. Note that a
polygonal line $\varGamma\in\CP$ is easily retrieved from a
configuration $\{\nu(x)\}_{x\in\calX}$ using the collection of the
corresponding edges $\{x\nu(x)\}_{x\in\calX}$ and the convexity
property.

It turns out, however, that in order to fit a given curve $\gamma$
the parameter $z=(z_1,z_2)$ needs to allow for a dependence on $x\in
X$. We derive suitable parameter functions $z_1(x)$ and $z_2(x)$,
assuming that they depend on $x$ through the ratio $x_2/x_1$ only,
which is particularly convenient in conjunction with the
parameterization of the curve $\gamma$ using its tangent slope. As
one would anticipate, if $\gamma=\gamma^*$ then the functions
$z_1(x)$, $z_2(x)$ are reduced to constants and our method recovers
the uniform distribution on $\CP_n$.

To summarize, our main result is as follows.
\begin{theorem}\label{th:main1}
Let $\gamma\subset\RR_+^2$  be a strictly convex, $C^3$-smooth
arc\textup{,} with the endpoints\/ $0$ and $(1,c_\gamma)$ and with
the curvature bounded from below by a positive constant. Suppose
that\/ $n_2/n_1 \to c_\gamma$\textup{,} and set
$\tilde\varGamma_n:=n_1^{-1}\varGamma$. Then there is a probability
measure $\PP_n^\gamma$ on $\CP_n$ such that\textup{,} for any\/
$\varepsilon>0$\textup{,}
\begin{equation}\label{LLNappr}
\lim_{n\to\infty}\PP_n^\gamma\{\varGamma\in\CP_n:\,d_{\mathcal
H}(\tilde\varGamma_n,\gamma)\le\varepsilon\}=1.
\end{equation}
\end{theorem}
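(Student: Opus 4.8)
The plan is to realize $\PP_n^\gamma$ as a conditional distribution $\QQ_z(\,\cdot\mid\CP_n)$ of an independent integer-valued random field $\nu=\{\nu(x)\}_{x\in\calX}$, where $\QQ_z$ depends on parameter functions $z_1(x),z_2(x)$ chosen to match the prescribed arc $\gamma$. First I would parameterize $\gamma$ by the tangent slope $t\in[0,c_\gamma]$; writing the arc in the form $\gamma=\{g(t):t\ge 0\}$, strict convexity and the $C^3$-smoothness with curvature bounded below guarantee that $t\mapsto g(t)$ is a $C^2$-diffeomorphism onto $\gamma$ and that the ``tangent measure'' $\dif g(t)$ has a density bounded away from $0$ and $\infty$. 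The heuristic is that the edge $x\myp\nu(x)$ emanating from a point $x=(x_1,x_2)$ of primitive slope $x_2/x_1$ should contribute, in expectation, the piece of $\gamma$ whose tangent slope equals $x_2/x_1$; hence I would set $\EEE_z[\nu(x)] =: \lambda(x)$ so that $\sum_{x:\,x_2/x_1\le t} x\,\lambda(x)\approx g(t)$, and solve this for $\lambda(x)$ — equivalently for $z_1(x),z_2(x)$ — as a function of the ratio $x_2/x_1$ alone. When $\gamma=\gamma^*$ this recovers constant $z$ and the uniform measure, as claimed in the excerpt.

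Next I would verify that, under $\QQ_z$ with these parameter functions, the scaled polygonal line concentrates near $\gamma$, i.e.\ a law of large numbers of the form \eqref{eq:LLN} holds for the ``free'' measure. Since $\nu(\cdot)$ has independent components, $\tilde\varGamma_n$ is built from a sum of independent edge-vectors $x\myp\nu(x)$, so one computes $\EEE_z[\cdot]$ and $\Var_z[\cdot]$ of the partial sums $\sum_{x_2/x_1\le t}x\myp\nu(x)$; the choice of $\lambda(x)$ makes the expectation converge (after scaling by $n_1^{-1}$) to $g(t)$ uniformly in $t$, while a variance estimate plus Chebyshev gives convergence in probability of each partial sum, and monotonicity/convexity upgrades this to Hausdorff convergence of the whole curve (a standard Glivenko–Cantelli-type argument for monotone curves). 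In particular this fixes the right endpoint: one needs $\EEE_z[\sum_x x\myp\nu(x)]/n_1\to(1,c_\gamma)$, consistent with the hypothesis $n_2/n_1\to c_\gamma$, and the parameters $z=z^{(n)}$ must be tuned (the analogue of choosing $z$ so that the ``mean'' polygonal line ends at $n$) so that the free endpoint has mean $\approx n=(n_1,n_2)$.

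The transition from the free measure $\QQ_z$ to the conditioned measure $\PP_n^\gamma=\QQ_z(\,\cdot\mid\varGamma\in\CP_n)$ requires a \emph{local limit theorem}: one must show that $\QQ_z\{\varGamma\in\CP_n\}=\QQ_z\{\sum_x x\myp\nu(x)=n\}$ is asymptotically $C\myp n_1^{-2}(1+o(1))$, i.e.\ the two-dimensional random vector $\sum_x x\myp\nu(x)$, suitably centered and normalized by its $2\times2$ covariance, obeys a lattice CLT with a genuinely local (pointwise) statement at the lattice point $n$. This is where the bulk of the technical work lies: one estimates the characteristic function $\EEE_z[\exp(\rmi\langle\theta,\sum_x x\myp\nu(x)\rangle)]=\prod_x\EEE_z[\exp(\rmi\langle\theta,x\rangle\nu(x))]$, splitting the $\theta$-integral into a neighborhood of the origin (where one Taylor-expands and uses nondegeneracy of the covariance, itself coming from the curvature lower bound) and the complementary region (where one needs uniform decay, i.e.\ that $|\EEE_z[\exp(\rmi s\,\nu(x))]|$ is bounded away from $1$ away from $s\in2\pi\ZZ$, and that enough of the $x$'s contribute). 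I expect \textbf{this local limit theorem — and in particular controlling the characteristic-function product uniformly in the $x$-dependent parameters $z_1(x),z_2(x)$ — to be the main obstacle}; the curvature bound is exactly what is needed to keep the covariance matrix uniformly positive-definite and hence the whole argument uniform. Once the LLT is in hand, the LLN for the conditioned measure follows from the one for the free measure by a routine absolute-continuity bound: for any event $A$, $\PP_n^\gamma(A)=\QQ_z(A\cap\{\varGamma\in\CP_n\})/\QQ_z\{\varGamma\in\CP_n\}\le \QQ_z(A)\cdot O(n_1^2)$, so the exponentially (or at least faster than polynomially) small $\QQ_z$-probability of $\{d_{\mathcal H}(\tilde\varGamma_n,\gamma)>\varepsilon\}$ survives the division, yielding \eqref{LLNappr}.
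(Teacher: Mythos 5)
Your overall strategy is the same as the paper's (randomize with an independent geometric field $\nu(x)$, calibrate the $x$-dependent parameters through the tangent-slope parameterization of $\gamma$, prove a free-measure law of large numbers and a two-dimensional local limit theorem, then condition on $\CP_n$), but the final transfer step contains a genuine gap. You assert that the $\QQ_z$-probability of $\{d_{\mathcal H}(\tilde\varGamma_n,\gamma)>\varepsilon\}$ is ``exponentially (or at least faster than polynomially) small'' and hence survives division by $\QQ_z\{\varGamma\in\CP_n\}$. Nothing in your argument delivers this: your free-measure concentration comes from Chebyshev, i.e.\ second moments, and since the fluctuations of the edge sums are of order $n_1^{2/3}$ this yields a deviation probability of order $n_1^{-2/3}$ only. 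Meanwhile $\QQ_z\{\varGamma\in\CP_n\}$ is of order $n_1^{-4/3}$, not $n_1^{-2}$ as you guess (the covariance matrix of $\xi=\sum_x x\myp\nu(x)$ has entries $\asymp n_1^{4/3}$, so its determinant is $\asymp n_1^{8/3}$); either way the quotient $n_1^{-2/3}/\QQ_z(\CP_n)$ blows up. The paper closes this by establishing higher-order moment-sum asymptotics (Section~\ref{sec6}) and applying the Kolmogorov--Doob submartingale inequality to the \emph{sixth} central moment of the process $\ell_\varGamma(t)-\EE_z^\gamma[\ell_\varGamma(t)]$, which gives a deviation bound $O(n_1^{-2})$; only then does division by $\QQ_z(\CP_n)\asymp n_1^{-4/3}$ give $O(n_1^{-2/3})\to0$. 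Some sharper-than-Chebyshev estimate is indispensable here; exponential concentration is neither proved nor needed.

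A second, smaller gap: for the local limit theorem to bound $\QQ_z(\CP_n)$ from below at the lattice point $n$, the centering must satisfy $\EE_z^\gamma(\xi)-n=O(n_1^{2/3})$, i.e.\ be accurate to within the fluctuation scale; otherwise the Gaussian density evaluated at $n$ is superpolynomially small. Your calibration only guarantees $n_1^{-1}\EE_z^\gamma(\xi)\to(1,c_\gamma)$, an $o(n_1)$ error, and since the parameter family is already pinned down by fitting $\gamma$ (one functional equation for $\delta_1,\delta_2$ plus the scaling $\alpha_n\sim n_1^{-1/3}$) there is no residual freedom for the extra ``tuning'' you invoke. In the paper this refined centering is Theorem~\ref{th:5.4}; it is a substantive step requiring the $C^3$ hypothesis together with Euler--Maclaurin summation and a Mellin-transform (M\"untz-formula) analysis, and it is precisely where the $C^3$ smoothness of $\gamma$ enters.
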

\begin{remark}
It will be more convenient to use another metric on the space of
convex paths (denoted by $d_{\mathcal{L}}$), based on the tangential
parameterization of paths and a sup-distance between the
corresponding arc lengths. However, the metrics $d_{\mathcal{L}}$
and $d_{\mathcal{H}}$ are equivalent.
\end{remark}

\begin{remark}
It is interesting to try and relax the $C^3$-smoothness condition on
$\gamma$ (e.g., by permitting ``change-points'' or corners), as well
as to allow for degeneration of the curvature (e.g., through
possible flat segments). We will address these issues elsewhere.
\end{remark}

The rest of the paper is organized as follows. In
Section~\ref{sec2}, we introduce the space of convex paths on the
plane and endow it with a suitable metric. In Section \ref{sec3},
the measures $\QQ_z^\gamma$ and $\PP_n^\gamma$ are constructed for a
given convex curve $\gamma$. In Section~\ref{sec4}, the parameter
vector-function $z(x)$ is chosen to guarantee the convergence of
``expected'' scaled polygonal lines
$\tilde\varGamma_n=n_1^{-1}\varGamma$ to the target curve $\gamma$
(Theorem~\ref{th:1}). Refined first-order moment asymptotics are
obtained in Section~\ref{sec5}, while higher-order moment sums are
analyzed in Section~\ref{sec6}. Section~\ref{sec7} is devoted to the
proof of a local central limit theorem (Theorem~\ref{llt}). Finally,
the limit shape result, with respect to both $\QQ_z^\gamma$ and
$\PP_n^\gamma$, is proved in Section~\ref{sec8} (Theorems \ref{zbc1}
and~\ref{zbc2}, respectively).

\subsubsection*{Some
%general
notations.}
For a row-vector $x=(x_1,x_2)\in \RR^2$, its Euclidean norm (length)
is denoted by $|x|:=(x_1^2+x_2^2)^{1/2}$, and $\langle
x,y\rangle:=x\mypp y^{\myn\topp\!}=x_1y_1+x_2\myp y_2$ is the
corresponding inner product of vectors $x,y\in \RR^2$. We denote
$\ZZ_+:=\{k\in\ZZ: k\ge0\}$, \,$\ZZ_+^2:=\ZZ_+\mynn\times\ZZ_+$\myp,
and similarly $\RR_+:=\{x\in\RR: x\ge0\}$,
$\RR_+^2:=\RR_+\mynn\times\RR_+$\myp. For $z=(z_1,z_2)\in\RR_+^2$
and $x=(x_1,x_2)\in\ZZ_+^2$, we use the multi-index notation
$z^{x}:=z_1^{x_1}z_2^{x_2}$. The notation $a_n\asymp b_n$ as
$n\to\infty$ means that
$$
0<\liminf_{n\to\infty}\frac{x_n}{y_n}\le\limsup_{n\to\infty}
\frac{x_n}{y_n}<\infty.
$$
We also use the standard notation $x_n\sim y_n$ for $x_n/y_n\to1$ as
$n_1,n_2\to\infty$.

\section{Preliminaries: convex planar paths}\label{sec2}
Let $g$ be a bounded function defined on some interval $[0,a]$, such
that $g(0)=0$, and suppose that $g$ is non-decreasing and convex on
$[0,a]$. Convexity means that the function's epigraph $\{(u,v): 0\le
u\le a, \,g(u)\le v\}$ is a convex set on the plane. Furthermore,
assume that $g$ is continuous on $[0,a]$ and piecewise
differentiable, with the derivative $g'$ continuous everywhere
except at a finite set of points (we allow $g'(a)$ to be infinite,
$g'(a)\le+\infty$).
%In particular, any chord connecting two points on the graph lies below the
%corresponding arc of the graph.
It follows that the function $t=g'(u)$ is nonnegative and
non-decreasing in its domain, and in particular $0\le t_0\le
g'(u)\le t_1\le\infty$ \,($0\le u\le a$), where
\begin{equation}\label{eq:t_01}
  t_0:=\inf_{0\le u\le a} g'(u),\qquad t_1:=\sup_{0\le u\le a} g'(u).
\end{equation}

Denote by $\gamma_g\equiv\gamma$
%=\{(u,v): 0\le u\le a,\ v=g(u)\}$
the graph of a function $g$ with the above properties, and let
$\mathfrak{G}$ be the set of all such curves. For the spaces
$\CP_n$, $\CP$ of convex polygonal lines introduced ablve, we have
$\CP_n\subset\CP\subset\mathfrak G$. If a polygonal line
$\varGamma\in\CP_n$ is taken as a ``curve'' $\gamma$, then the
corresponding function $g=g_\varGamma$ is a piecewise linear
function defined on $[0,n_1]$, such that $g_\varGamma(n_1)=n_2$.

Let us now equip the space $\mathfrak{G}$ with a suitable metric. If
a function $g=g_\gamma$ determines a convex curve
$\gamma\in\mathfrak{G}$, we set
\begin{equation}\label{u_g}
u_\gamma(t):=\sup\{u: g'_\gamma(u)\le t\},\qquad 0\le t\le\infty,
\end{equation}
with the convention that $\sup\emptyset=0$. That is, $u_\gamma(t)$
is a
%(right-continuous)
generalized inverse of the derivative $t=g'_\gamma(u)$
%(g')^\leftarrow(t)=
(cf.\ \cite[\S1.5]{BGT}).
% Bingham \emph{et al.}\ 1989, Sect.~1.5).
It follows that the function $u_\gamma(\cdot)$ is non-decreasing and
right-continuous on $[0,\infty]$, with values in $[0,a]$. Moreover,
if $t_0$, $t_1$ are the extreme values of the derivative $g'_\gamma$
(see (\ref{eq:t_01})) then $u_\gamma(t)\equiv 0$ for all $t<t_0$ and
$u_\gamma(t)=a$ for all $t\ge t_1$.

Denote by $\ell_{\gamma}(t)$ the length of the part of $\gamma$
where the tangent slope does not exceed $t$,
\begin{equation}\label{ell}
\ell_\gamma(t)=\int_0^{u_\gamma(t)}\sqrt{1+g'_\gamma(u)^2}\;\dif
u,\qquad 0\le t\le\infty.
\end{equation}
Our assumptions imply that every curve $\gamma\in\mathfrak{G}$ is
rectifiable, since
%that is, its length is well defined and finite,
$$
 \ell_\gamma(\infty)=\int_0^{u_\gamma(\infty)}\sqrt{1+g'_\gamma(u)^2}\,\dif u\le
 \int_0^a(1+g'_\gamma(u))\,\dif u=a+g_\gamma(a)<\infty.
$$
Finally, we define the function $d_{\mathcal{L}}:\mathfrak
G\times\mathfrak G\to\RR_+\cup\{+\infty\}$ by setting
\begin{equation}\label{eq:rho}
d_{\mathcal{L}}(\gamma_1,\gamma_2):=\sup_{0\le t\le\infty}
|\ell_{\gamma_1}(t)-\ell_{\gamma_2}(t)|,\qquad
\gamma_1,\gamma_2\in\mathfrak{G}.
\end{equation}

\begin{proposition}\label{pr:d1}
The function $d_{\mathcal{L}}(\cdot,\cdot)$ satisfies all properties
of a distance.
\end{proposition}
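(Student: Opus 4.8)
The properties of non-negativity and symmetry, as well as $d_{\mathcal{L}}(\gamma,\gamma)=0$, are immediate from the definition (\ref{eq:rho}); the triangle inequality will follow by taking $\sup_{0\le t\le\infty}$ in the pointwise bound $|\ell_{\gamma_1}(t)-\ell_{\gamma_3}(t)|\le|\ell_{\gamma_1}(t)-\ell_{\gamma_2}(t)|+|\ell_{\gamma_2}(t)-\ell_{\gamma_3}(t)|$. (One may also note that $d_{\mathcal{L}}$ is in fact finite on $\mathfrak{G}\times\mathfrak{G}$, since $\ell_\gamma(\infty)\le a+g_\gamma(a)<\infty$ for every $\gamma\in\mathfrak{G}$, as observed above.) Hence the only substantive point is the implication $d_{\mathcal{L}}(\gamma_1,\gamma_2)=0\ \Rightarrow\ \gamma_1=\gamma_2$, that is, that a curve $\gamma\in\mathfrak{G}$ is uniquely determined by the function $t\mapsto\ell_\gamma(t)$.

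My plan for this is to reconstruct $\gamma$ from $\ell_\gamma$ by passing to the arc-length (tangential) parametrization. Parametrizing $\gamma$ by its arc length $s\in[0,L]$, where $L=\ell_\gamma(\infty)$ is the total length of $\gamma$, let $\tau_\gamma(s)$ be the tangent slope of $\gamma$ at the point lying at arc length $s$, using the right-hand slope at the finitely many corner points. By convexity $s\mapsto\tau_\gamma(s)$ is non-decreasing, and (with this convention) right-continuous. Because the slope increases monotonically as one moves along $\gamma$, the part of $\gamma$ on which the tangent slope does not exceed $t$ is an initial sub-arc, whose length equals $\ell_\gamma(t)$ by the very definition (\ref{ell}); consequently
\[
\ell_\gamma(t)=\sup\{s\in[0,L]:\tau_\gamma(s)\le t\},\qquad 0\le t\le\infty,
\]
i.e.\ $\ell_\gamma$ is the generalized inverse of $\tau_\gamma$ in the sense of (\ref{u_g}). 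Since $\tau_\gamma$ is non-decreasing and right-continuous, the standard duality for generalized inverses (see \cite[\S1.5]{BGT}) yields $\tau_\gamma=\ell_\gamma^{\leftarrow}$, so that $\tau_\gamma$, together with $L=\ell_\gamma(\infty)$, is recovered from $\ell_\gamma$. Finally, I would recover $\gamma$ itself from $\tau_\gamma$ by integrating the unit tangent vector starting from the origin,
\[
\gamma=\left\{\left(\int_0^s\frac{\dif\sigma}{\sqrt{1+\tau_\gamma(\sigma)^2}}\,,\ \int_0^s\frac{\tau_\gamma(\sigma)\,\dif\sigma}{\sqrt{1+\tau_\gamma(\sigma)^2}}\right):\,0\le s\le L\right\},
\]
which is the graph of a non-decreasing convex function (the first coordinate being strictly increasing in $s$), hence the required $g_\gamma$. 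Thus $\ell_{\gamma_1}\equiv\ell_{\gamma_2}$ forces $\tau_{\gamma_1}\equiv\tau_{\gamma_2}$ and $L_1=L_2$, whence $\gamma_1=\gamma_2$.

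The delicate part is the interface with the theory of generalized inverses: one must check that $\tau_\gamma$ is genuinely right-continuous (this is where the right-hand-slope convention at the corners is used), that the identity $\ell_\gamma=\tau_\gamma^{\leftarrow}$ and its inversion hold also at the boundary values $t\in\{0,\infty\}$ and across straight segments of $\gamma$ (which correspond to jumps of $\ell_\gamma$ and to flat stretches of $\tau_\gamma$) and across corners of $\gamma$ (flat stretches of $\ell_\gamma$, jumps of $\tau_\gamma$), and finally that the curve reconstructed by the above integral does lie in $\mathfrak{G}$, i.e.\ is a piecewise-$C^1$ convex graph issuing from $0$. All of this is routine once the monotonicity and one-sided continuity of $\tau_\gamma$ are in place, so I would keep this verification brief rather than grind through it.
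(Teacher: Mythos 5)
Your proof is correct, and it takes a genuinely different route from the paper's. The paper also rests on the tangential parameterization, but it proceeds by smoothing: it approximates $\gamma_1,\gamma_2$ by $C^2$-smooth strictly convex curves, differentiates (\ref{ell}) to get (\ref{du})--(\ref{dv}), and integrates by parts to obtain the formulas (\ref{u}) expressing $u_\gamma(t)$, $v_\gamma(t)$ as \emph{linear} functionals of $\ell_\gamma$; injectivity then follows because $\Delta\ell\to0$ uniformly forces $\Delta u,\Delta v\to0$ uniformly. You instead reconstruct $\gamma$ directly from $\ell_\gamma$, with no smoothing: you observe that $\ell_\gamma$ is the generalized inverse of the (right-continuous, non-decreasing) slope-versus-arc-length function $\tau_\gamma$, recover $\tau_\gamma=\ell_\gamma^{\leftarrow}$ by the standard duality, and then recover the curve by integrating the unit tangent in the arc-length variable (this last step is legitimate since $g_\gamma$, being convex, continuous and monotone on a compact interval, is absolutely continuous, so the arc-length parameterization is as well). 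For the pure injectivity statement your argument is arguably cleaner, as it works on all of $\mathfrak{G}$ at once and avoids the approximation step that the paper only sketches; what it does not deliver is the quantitative stability built into the paper's linear formulas, namely that $d_{\mathcal L}$-closeness implies Hausdorff closeness uniformly, which is precisely what the paper extracts from its proof to obtain Corollary \ref{cor:d-d1} (equivalence of $d_{\mathcal L}$ and $d_{\mathcal H}$), used later in deducing Theorem \ref{th:main1} from Theorems \ref{zbc1} and \ref{zbc2}. The only points needing care in your write-up are the ones you already flag: the right-slope convention making $\tau_\gamma$ right-continuous, and the boundary/jump conventions in the generalized-inverse duality; both are routine.
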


\begin{proof}
Clearly,
$d_{\mathcal{L}}(\gamma_1,\gamma_2)=d_{\mathcal{L}}(\gamma_2,\gamma_1)$
and $d_{\mathcal{L}}(\gamma,\gamma)=0$. The triangle axiom is also
obvious.
%follows from the inequality
%$$
%|\ell_{\gamma_1}(t)-\ell_{\gamma_2}(t)|\le
%|\ell_{\gamma_1}(t)-\ell_{\gamma_3}(t)|+
%|\ell_{\gamma_3}(t)-\ell_{\gamma_2}(t)|, \qquad 0\le t\le\infty.
%$$
So it remains to verify that if
$d_{\mathcal{L}}(\gamma_1,\gamma_2)=0$ then $\gamma_1=\gamma_2$.
%NB!
%Indeed, according to the definition (\ref{eq:rho}) the
%condition $d_{\mathcal{L}}(\gamma_1,\gamma_2)=0$ implies the identity
%$\ell_{\gamma_1}(t)\equiv\ell_{\gamma_2}(t)$, that is,
%$$
%\int_0^{u_{\gamma_1}(t)}\!\!\sqrt{1+ g'_{\gamma_1}(u)^2}\,du=
%\int_0^{u_{\gamma_2}(t)}\!\!\sqrt{1+ g'_{\gamma_2}(u)^2}\,du,\qquad 0\le t\le\infty.
%$$
To this end, approximating $\gamma_1,\gamma_2\in\mathfrak G$ by
$C^2$-smooth strictly convex curves $\gamma_1^k$, $\gamma_2^k$,
respectively, we reduce the problem to checking that if
$\gamma_1^k$, $\gamma_2^k$ are close to each other in the sense of
$d_{\mathcal{L}}$, then they are also close in the Hausdorff metric
$d_{\mathcal{H}}$; that is, if
$d_{\mathcal{L}}(\gamma_1^k,\gamma_2^k)\to0$ then
$d_{\mathcal{H}}(\gamma_1^k,\gamma_2^k)\to0$ \,($k\to\infty$).

Next, for a strictly convex, increasing function $g_\gamma\in
C^2[0,a]$, the function $u_\gamma(t)$ defined in (\ref{u_g}) is
given explicitly by
\begin{equation}\label{eq:u(t)}
u_\gamma(t)=\left\{\begin{array}{ll}
\ 0,&0\le t\le t_0,\\
(g'_\gamma)^{-1}(t),\ & t_0\le t\le t_1,\\
\ a,&t_1\le t\le\infty,
\end{array}
\right.
\end{equation}
where $(g'_\gamma)^{-1}(t)$ is the (ordinary) inverse of the
derivative $g'_\gamma(u)$. In particular, the equations
$u=u_\gamma(t)$, \,$v=g_\gamma(u_\gamma(t))$ determine a
parameterization of the curve $\gamma$ via the derivative
$t=g'_\gamma(u)$. Differentiating formula (\ref{ell}) with respect
to $t$, we find
\begin{align}\label{du}
\frac{\dif u_\gamma}{\dif t}=\frac{1}{\sqrt{1+t^2}}\,\frac{\dif
\ell_\gamma}{\dif t}\myp,\qquad u_\gamma(0)=0,
\end{align}
and hence
\begin{align}\label{dv}
\frac{\dif v_\gamma}{\dif t}=\frac{\dif g_\gamma}{\dif
u}\cdot\frac{\dif u_\gamma}{\dif t}=
\frac{t}{\sqrt{1+t^2}}\,\frac{\dif \ell_\gamma}{\dif t}\myp,\qquad
v_\gamma(0)=0.
\end{align}
Integrating equations (\ref{du}), (\ref{dv}) by parts, we obtain
\begin{align} \label{u}
u_\gamma(t)=\frac{\ell_\gamma(t)}{\sqrt{1+t^2}}+
\int_0^t\frac{s\myp\ell_\gamma(s)}{(1+s^2)^{3/2}}\,\dif s,\ \ \quad
v_\gamma(t)=\frac{t\myp\ell_\gamma(t)}{\sqrt{1+t^2}}-
\int_0^t\frac{\ell_\gamma(s)}{(1+s^2)^{3/2}}\,\dif s.
\end{align}
Note that these equations are linear in $\ell_\gamma$. Hence,
setting for $\gamma_1^k$, $\gamma_2^k$
\begin{gather*}
  \Delta u_k(t):=u_{\gamma_1^k}(t)-u_{\gamma_2^k}(t),\quad
  \Delta v_k(t):=v_{\gamma_1^k}(t)-v_{\gamma_2^k}(t),\\
  \Delta \ell_k(t):=\ell_{\gamma_1^k}(t)-\ell_{\gamma_2^k}(t),
\end{gather*}
from (\ref{u}) we get
\begin{equation*}
\Delta u_k(t)= \frac{\Delta\ell_k(t)}{\sqrt{1+t^2}}+ \int_0^t
\frac{s\myp\Delta\ell_k(s)}{(1+s^2)^{3/2}}\,\dif s,\quad \Delta
v_k(t)=\frac{t\myp\Delta\ell_k(t)}{\sqrt{1+t^2}}-
\int_0^t\frac{\Delta\ell_k(s)}{(1+s^2)^{3/2}}\,\dif s.
\end{equation*}
This implies that if $\Delta\ell_k(t)\to 0$, uniformly in
$t\in[0,\infty]$, then $\Delta u_k(t)\to 0$, $\Delta v_k(t)\to 0$,
also uniformly on $[0,\infty]$ \,($k\to\infty$). This completes the
proof.
\end{proof}

From the proof of Proposition \ref{pr:d1}, one can see that the following
result holds.
\begin{corollary}\label{cor:d-d1}
The metrics\/ $d_{\mathcal{L}}$ and\/ $d_{\mathcal{H}}$ are
equivalent\textup{;} in particular\textup{,}
$d_{\mathcal{L}}(\gamma_k,\gamma)\to 0$ if and only if\/
$d_{\mathcal{H}}(\gamma_k,\gamma)\to 0$.
\end{corollary}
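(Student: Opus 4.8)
The plan is to read off the equivalence of the two metrics directly from the machinery already built for Proposition \ref{pr:d1}. The key observation is that the proof of Proposition \ref{pr:d1} did more than establish the separation axiom: it produced explicit linear integral formulas \eqref{u} expressing the tangential coordinates $u_\gamma(t)$, $v_\gamma(t)$ of a $C^2$-smooth strictly convex curve in terms of its arc-length function $\ell_\gamma(t)$, together with the implication that uniform smallness of $\Delta\ell_k$ forces uniform smallness of $\Delta u_k$ and $\Delta v_k$. Since a uniform bound on $|\Delta u_k(t)|+|\Delta v_k(t)|$ over $t\in[0,\infty]$ controls the Hausdorff distance between the two parameterized curves (the curves are traced out by $t\mapsto(u_{\gamma_i^k}(t),v_{\gamma_i^k}(t))$, so pointwise closeness of the parameterizations immediately bounds $d_{\mathcal H}$), we conclude that $d_{\mathcal L}(\gamma_1^k,\gamma_2^k)\to 0$ implies $d_{\mathcal H}(\gamma_1^k,\gamma_2^k)\to 0$.

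Next I would establish the reverse implication, $d_{\mathcal H}\to 0 \implies d_{\mathcal L}\to 0$, which follows by inverting the same relationship. From \eqref{du}--\eqref{dv}, $\ell_\gamma$ is recovered from the parameterization $(u_\gamma,v_\gamma)$ by integrating $\dif\ell_\gamma=\sqrt{(\dif u_\gamma)^2+(\dif v_\gamma)^2}$, i.e.\ $\ell_\gamma(t)$ is just the arc length of the portion of $\gamma$ with tangent slope at most $t$. For $C^2$-smooth strictly convex curves, Hausdorff convergence $\gamma_k\to\gamma$ entails convergence of the tangent directions and hence of the arc-length functions uniformly in the slope parameter $t$; this is the standard fact that for uniformly convex curves the correspondence between a curve and its support function (equivalently, its arc length as a function of normal direction) is bicontinuous. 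Thus $d_{\mathcal L}(\gamma_k,\gamma)\to 0$.

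To finish, I would remove the smoothness restriction by the approximation argument already invoked in the proof of Proposition \ref{pr:d1}: any $\gamma\in\mathfrak G$ can be approximated in both $d_{\mathcal L}$ and $d_{\mathcal H}$ by $C^2$-smooth strictly convex curves $\gamma^k$ (e.g.\ by mollifying $g_\gamma$ and adding a small quadratic term), and both $\ell_{\gamma^k}\to\ell_\gamma$ uniformly and $\gamma^k\to\gamma$ in $d_{\mathcal H}$. A routine $3\varepsilon$-argument then transfers the equivalence from the smooth case to the general case, giving $d_{\mathcal L}(\gamma_k,\gamma)\to 0 \iff d_{\mathcal H}(\gamma_k,\gamma)\to 0$ for arbitrary curves in $\mathfrak G$.

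The main obstacle is the direction $d_{\mathcal H}\to 0 \implies d_{\mathcal L}\to 0$, because Hausdorff convergence of convex curves does not in general control tangent slopes near endpoints or where the curvature degenerates — this is precisely why the hypothesis of strict convexity (a curvature bounded below) is essential, and why the argument must be run first for uniformly convex smooth curves and only then extended. One must be careful that the approximating sequence $\gamma^k$ is chosen so that the slope ranges $[t_0,t_1]$ and the behavior at $t=\infty$ (where $g'_\gamma(a)$ may be infinite) are respected, so that the sup over the full interval $[0,\infty]$ in \eqref{eq:rho} is genuinely controlled; handling the tail $t\to\infty$ uniformly is the delicate point of the estimate.
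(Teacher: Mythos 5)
Your first direction ($d_{\mathcal L}\to0\Rightarrow d_{\mathcal H}\to0$) is exactly what the paper itself relies on: its entire proof of the corollary is the remark that the linear formulas (\ref{u}) in the proof of Proposition \ref{pr:d1} convert uniform smallness of $\Delta\ell_k$ into uniform smallness of $\Delta u_k,\Delta v_k$, and pointwise closeness of the slope parameterizations dominates the Hausdorff distance. So that part of your proposal matches the paper.

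The genuine gap is in your converse direction, and specifically in the final step where you extend it from smooth strictly convex curves to all of $\mathfrak G$ by a ``routine $3\varepsilon$-argument.'' That extension cannot work, because the implication $d_{\mathcal H}\to0\Rightarrow d_{\mathcal L}\to0$ is false for general elements of $\mathfrak G$: take $\gamma$ the straight segment given by $g(u)=u$ on $[0,1]$ and $\gamma_k$ the graphs of $g_k(u)=u-k^{-1}u\myp(1-u)$, which are strictly convex, increasing elements of $\mathfrak G$ with the same endpoints. Then $d_{\mathcal H}(\gamma_k,\gamma)\le 1/(4k)\to0$, but $g_k'(u)\le 1$ exactly for $u\le 1/2$, so $\ell_{\gamma_k}(1)\to\sqrt{2}/2$ while $\ell_{\gamma}(1)=\sqrt{2}$; hence $d_{\mathcal L}(\gamma_k,\gamma)\ge\sqrt{2}/2$ for all large $k$. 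This is precisely the obstruction you flag yourself (an atom in the slope distribution of a limit curve with a flat piece), yet you then assert the transfer anyway; the point is that the modulus in the implication ``$d_{\mathcal H}$ small $\Rightarrow d_{\mathcal L}$ small'' degenerates together with the curvature lower bound of the limit curve, so no approximation of a non-strictly-convex curve by strictly convex ones can rescue it. The converse implication does hold when the limit curve has no straight segments (in particular under Assumption \ref{as2}), but even there your appeal to ``bicontinuity of the support-function correspondence'' is only an assertion: what is needed is that Hausdorff convergence of convex curves gives weak convergence of their length-by-slope measures together with convergence of total lengths, and then uniform convergence of $\ell_{\gamma_k}(t)$ via continuity of the limit function $\ell_\gamma(t)$, with separate control of the tail $t\to\infty$. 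Note finally that the paper never uses this converse: Theorem \ref{th:main1} only needs $d_{\mathcal L}\to0\Rightarrow d_{\mathcal H}\to0$, which is what the proof of Proposition \ref{pr:d1} actually supplies, so a correct write-up should either restrict the equivalence to limits satisfying Assumption \ref{as2} or prove only the direction that is used.
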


Consider a fixed convex curve $\gamma\in\mathfrak G$, represented as
the graph of an increasing convex function $g_\gamma$, which for
definiteness is assumed to be defined on the interval $[0,1]$. We
will be working under the following
\begin{assumption}\label{as2}
The function $g_\gamma$ is strictly increasing and strictly convex
on $[0,1]$, and $g_\gamma\in C^2[0,1]$. In particular,
$g'_\gamma(u)\ge 0$, $g''_\gamma(u)\ge 0$ for all $u\in[0,1]$.
%$C[0,1]\cap C^2(0,1)$.
%, with $g''_\gamma(u)\ge 0$ for all $u\in[0,1]$.
Moreover, the curvature $\varkappa_\gamma$ of the curve $\gamma$,
given by the formula
\begin{equation}\label{eq:curvature}
\varkappa_\gamma(u)=\frac{g''_\gamma(u)}{(1+g'_\gamma(u)^2)^{3/2}}\myp,\qquad
0\le u\le 1,
\end{equation}
is uniformly bounded from below,
\begin{align}\label{kr}
\inf_{u\in[0,1]}\varkappa_\gamma(u)\ge K_0 >0.
\end{align}
\end{assumption}
%The meaning of the last condition is that the curve $\gamma$ is
%required to be not ``too flat''.

As was mentioned in the proof of Proposition \ref{pr:d1}, the graph
$\gamma$ of the function $g_\gamma$ can be parameterized by the
derivative $t=g_\gamma'(u)$ via the equations $u=u_\gamma(t)$,
$v=g_\gamma(u_\gamma(t))$, where $u_\gamma(t)$ is given by
(\ref{eq:u(t)}). Expression (\ref{eq:curvature}) for the curvature
is then reduced to
\begin{equation}\label{eq:curvature1}
\varkappa_\gamma(t)=\frac{g''_\gamma(u_\gamma(t))}{(1+t^2)^{3/2}}\myp,\qquad
t_0\le t\le t_1,
\end{equation}
where $t_0=\inf_u g'_\gamma(u)$, \,$t_1=\sup_u g'_\gamma(u)$ (see
(\ref{eq:t_01})).

\section{Construction of the measures $\QQ_z^{\gamma}$ and
$\PP_{n}^\gamma$}\label{sec3}

%Let us first describe the construction of the global measure
%$\QQ_z^\gamma$ on the space $\CP$. The measure $\PP_{n}^{\gamma}$
%on the space $\CP_n$ is then obtained as a conditional
%distribution induced by fixing the right end of polygonal lines
%$\varGamma\in\CP$ at point $n=(n_1,n_2)$.

Consider the set $\mathcal{X}\subset\ZZ_+^2$ of all pairs of
co-prime nonnegative integers,
\begin{equation}\label{eq:X}
\mathcal{X}:=\{x=(x_1,x_2)\in\ZZ_+^2:\ \gcd(x_1,x_2)=1\},
\end{equation}
where ``$\gcd$'' stands for ``greatest common divisor''.
%In particular, the pair $(1,0)$ is included in this set, while $(0,1)$ is not.
Denote by $\tau(x):=x_2/x_1\in[0,+\infty]$ the slope of the vector
$x=(x_1,x_2)\in\mathcal{X}$. Let $\varPhi:= (\ZZ_+)^{\calX}$ be the
space of functions on $\calX$ with nonnegative integer values, and
consider the subspace of functions with \textit{finite support}\/,
$\varPhi_0 :=\{\nu\in\varPhi :\,\#(\supp\myp\nu)<\infty\}$, where $
\supp\myp\nu:=\{x\in \calX:\,\nu(x)>0\}$. It is easy to see that the
space $\varPhi_0$ is in one-to-one correspondence with the space
$\CP=\bigcup_{n\in\ZZ_+^2}\CP_n$ of all (finite) convex lattice
polygonal lines, whereby each $x\in \calX$ determines the
\emph{direction} of a potential edge, only utilized if
$x\in\supp\myp\nu$, in which case the value $\nu(x)>0$ specifies the
\emph{scaling factor}, altogether yielding a vector edge
$x\myp\nu(x)$; finally, assembling all such edges into a polygonal
line is uniquely determined by the fixation of the starting point
(at the origin) and the convexity property.
%
%Let $\varPhi(\mathcal{X}):=\{\varphi\colon
%\mathcal{X}\to\ZZ_+\}=(\ZZ_+)^{\mathcal{X}}$ be the space of
%unctions on $\mathcal{X}$ with non-negative integer values. Denote
%by $\supp\varphi:=\{x\in \mathcal{X}: \varphi(x)>0\}$ the support of
%the function $\varphi\in\varPhi(\mathcal{X})$ and consider the
%subspace $\varPhi_0(\mathcal{X}):=\{\varphi\in\varPhi(\mathcal{X}):\
%\#(\supp\varphi)<\infty\}$ of functions with a \emph{finite}
%support. It is easy to see that functions
%$\varphi\in\varPhi_0(\mathcal{X})$ are in a one-to-one
%correspondence with finite polygonal lines $\varGamma\in\CP$.
%ndeed, let us arrange the points $x=(x_1,x_2)\in\supp\varphi$
%according to the increase of their slope $\tau(x)$. Multiplying the
%vector $(x_1,x_2)$ by the corresponding value $\varphi(x)>0$, we
%obtain a collection of consecutive edges of some convex finite
%polygonal line $\varGamma$. The converse mapping is constructed
%similarly.
Note that $\nu(x)\equiv0$ formally corresponds to the ``trivial''
polygonal line with coinciding endpoints. In what follows, we
identify the spaces $\CP$ and $\varPhi_0(\mathcal{X})$.

Let us now introduce on $\varPhi_0(\mathcal{X})$ a probability
measure $\QQ_{z}^{\gamma}$ by setting
\begin{align}\label{a1}
\QQ_{z}^{\gamma}(\varGamma):=&{}\prod_{x\in \mathcal{X}} z^{x
\nu(x)} (1-z^{x}),\qquad
%=\prod_{x\in
%\mathcal{X}}\left(z_1^{x_1}z_2^{x_2}\right)^{\nu(x)} \prod_{x\in
%\mathcal{X}}(1-z_1^{x_1}z_2^{x_2}),
\CP\ni\varGamma\leftrightarrow \nu\in\varPhi_0(\mathcal{X}),
\end{align}
where $z\equiv z(x)=(z_1(x),z_2(x))$ is a parameter
(vector-)function such that $0\le z_j(x)<1$ \,($x\in \mathcal{X}$).
%It is important that, unlike
%the ``classical'' case (see Sinai 1994 and Bogachev and Zarbaliev 1999b) where
%the parameters $z_1$, $z_2$ are constants, in the general situation
%$z_1$, $z_2$ depend on $x\in \mathcal{X}$.
Its explicit form, determined by a given curve
$\gamma\in\mathfrak{G}$, will be specified later on. So far, we only
assume that
\begin {align}\label{eq:norm}
Z:=\prod_{x\in \mathcal{X}} (1-z^{x})>0,
\end{align}
which guarantees that
%the normalization in
(\ref{a1}) is well
defined. Definition (\ref{a1}) implies that the random variables
$\{\nu(x)\}_{x\in \mathcal{X}}$ are mutually independent and have
%(in general, non-identical)
geometric distribution with parameter $z^{x}$,
\begin{align}\label{gz}
\QQ_z^{\gamma}\{\nu(x)=k\}=z^{kx}(1-z^{x}),\quad k\in\ZZ_+;
\end{align}
in particular, the corresponding expected value and variance are
given by \cite[\S\myp{}XI.2, p.~269]{Feller}
\begin{equation}\label{eq:nu}
\EE_{z}^{\gamma}[\nu(x)]=\frac{z^x}{1-z^x}=\sum_{k=1}^\infty
z^{kx},\qquad \Var[\nu(x)]=\frac{z^x}{(1-z^x)^2}=\sum_{k=1}^\infty k
z^{kx}.
\end{equation}

 Note that
%the measure
$\QQ_{z}^{\gamma}$ can be extended in a standard way to a measure on
the space $\varPhi(\mathcal{X})$ of all nonnegative integer-valued
functions on $\mathcal{X}$.
% (cf.\ Bogachev and Zarbaliev 1999b, 2003).
However, $\QQ_z^{\gamma}$ is in
fact concentrated on the subset $\varPhi_0(\mathcal{X})\subset
\varPhi(\mathcal{X})$ consisting of all finite configurations
$\nu(\cdot)$.

\begin{lemma}\label{lm:Q=1}
Condition\/ \textup{(\ref{eq:norm})} is necessary and sufficient in
order that\/ $\QQ_z^{\gamma}\{\nu\in\varPhi_0(\mathcal{X})\}=1$.
\end{lemma}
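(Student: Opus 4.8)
The plan is to reduce the event $\{\nu\in\varPhi_0(\calX)\}$ to a statement about the almost-sure finiteness of the support $\supp\nu=\{x\in\calX:\nu(x)>0\}$, and then to apply the Borel--Cantelli lemmas, exploiting the independence of the family $\{\nu(x)\}_{x\in\calX}$ under $\QQ_z^\gamma$. Concretely, for each $x\in\calX$ let $A_x:=\{\nu(x)\ge1\}$; then $\nu\in\varPhi_0(\calX)$ precisely when only finitely many of the events $A_x$ occur, i.e.\ $\{\nu\in\varPhi_0(\calX)\}=\{\omega:\ \omega\in A_x\text{ for finitely many }x\}$. By \eqref{gz} we have $\QQ_z^\gamma(A_x)=\QQ_z^\gamma\{\nu(x)\ge1\}=z^x$, so the relevant series is $\sum_{x\in\calX}z^x$.

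First I would prove sufficiency. Assume \eqref{eq:norm}, i.e.\ $Z=\prod_{x\in\calX}(1-z^x)>0$. Since $0\le z^x<1$, the standard criterion for convergence of infinite products of terms in $(0,1]$ gives that $\prod_{x\in\calX}(1-z^x)>0$ if and only if $\sum_{x\in\calX}z^x<\infty$ (using $-\log(1-t)\asymp t$ for $t$ bounded away from $1$, and noting that only finitely many $x$ can have $z^x$ near $1$ when the product is positive — one should phrase this carefully, e.g.\ by splitting off the finitely many indices with $z^x\ge\tfrac12$). Hence \eqref{eq:norm} implies $\sum_{x\in\calX}\QQ_z^\gamma(A_x)=\sum_{x\in\calX}z^x<\infty$, and the first Borel--Cantelli lemma yields $\QQ_z^\gamma(A_x\text{ i.o.})=0$, that is, $\#(\supp\nu)<\infty$ almost surely, so $\QQ_z^\gamma\{\nu\in\varPhi_0(\calX)\}=1$.

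For necessity, I would argue by contraposition: if \eqref{eq:norm} fails, then $\sum_{x\in\calX}z^x=\infty$ by the same product criterion. The events $A_x$ are independent because the $\nu(x)$ are independent, so the second Borel--Cantelli lemma applies and gives $\QQ_z^\gamma(A_x\text{ i.o.})=1$; equivalently $\#(\supp\nu)=\infty$ almost surely, whence $\QQ_z^\gamma\{\nu\in\varPhi_0(\calX)\}=0\neq1$. (A minor point: one should first observe that $Z$ is well defined as a limit in $[0,1]$ of the decreasing partial products, so "$Z>0$'' versus "$Z=0$'' is a genuine dichotomy.) Combining the two directions gives the claimed equivalence.

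The only real subtlety — and the step I would flag as needing care rather than being genuinely hard — is the elementary but slightly fussy equivalence between positivity of the infinite product $\prod(1-z^x)$ and summability of $\sum z^x$ when the $z^x$ are not assumed bounded away from $1$; this is handled by isolating the (necessarily finite, if the product is positive, and irrelevant either way) set of indices with $z^x$ close to $1$. Everything else is a direct application of independence and the two Borel--Cantelli lemmas, with no analytic input beyond \eqref{gz}.
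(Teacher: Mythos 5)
Your proposal is correct and follows essentially the same route as the paper: the equivalence between $\prod_{x\in\mathcal{X}}(1-z^x)>0$ and $\sum_{x\in\mathcal{X}}z^x<\infty$, followed by the first Borel--Cantelli lemma applied to the events $\{\nu(x)>0\}$, whose probabilities are $z^x$ by (\ref{gz}). The only difference is that you also spell out the necessity direction via independence and the second Borel--Cantelli lemma, a step the paper leaves implicit, and your treatment of it is correct.
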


\begin{proof}
According to (\ref{gz}),
%we have $\QQ_z^{\gamma}\{\nu(x)>0\}=z^{x}$. Hence,
$$
\sum_{x\in \mathcal{X}} \QQ_z^{\gamma}\{\nu(x)>0\}=\sum_{x\in
\mathcal{X}}z^{x}<\infty
$$
whenever
%Note that the convergence of this series is equivalent to
%convergence of
the infinite product in (\ref{eq:norm}) is convergent. By
Borel--Cantelli's lemma, this implies  that only finitely many
events $\{\nu(x)>0\}$ may occur ($\QQ_z^{\gamma}$-a.s),
%That is,
%$\QQ_z^{\gamma}(\varPhi_0(\mathcal{X}))=1$,
and the lemma is proved.
\end{proof}

As a result, with $\QQ_z^\gamma$-probability $1$ a realization of
the random field $\nu(\cdot)$ determines a (random) convex polygonal
line $\varGamma\in\CP$. Denote by $\xi_\varGamma=(\xi_1,\xi_2)$ the
right endpoint of $\varGamma$,
%
%\begin{align}\label{xi_i}
%\xi_j=\sum_{x\in \mathcal{X}} x_j\myp\nu(x)\qquad (j=1,2).
%\end{align}
so that $\CP_n=\{\varGamma\in\CP: \,\xi_\varGamma=n\}$.
%The subspace
%$\CP_n\subset \CP$ of polygonal lines with the right endpoint fixed
%at $n=(n_1,n_2)\in\ZZ_+^2$ is then represented as a ``slice'' of
%$\CP$ obtained by the condition $\xi_\varGamma=n$.
%$$
%\CP_{n}=\{\varGamma\in\CP:\xi=n\}.
%$$
Accordingly,
%the probability measure
$\QQ_z^{\gamma}$ induces the conditional distribution
$\PP_{n}^{\gamma}$ on $\CP_n$,
% by the formula
\begin{align}\label{u_1}
\PP_{n}^{\gamma}(\varGamma):=\QQ_{z}^{\gamma}\{\varGamma\,|\,\xi_\varGamma=n\}=
\frac{\QQ_{z}^{\gamma}(\varGamma)}{\QQ_{z}^{\gamma}\{\xi_\varGamma=n\}}\mypp,
\qquad \varGamma\in\CP_{n}.
\end{align}
%Depending on the choice of the parameters (parametric functions) $z_1(x)$,
%$z_2(x)$ we obtain various distributions $\PP_{n}^{\gamma}$ on the space
%$\CP_{n}$.
%In particular, if $z_1$, $z_2$ are constants not depending on $x\in \mathcal{X}$ then the distribution
%$\PP_{n}^{\gamma}$ turns out to be uniform (see the end of the next section).

\section{The choice of the parameter function $z(x)$}\label{sec4}

In the above construction, the measure $\PP_z^{\gamma}$ depends on
the vector parameter $\{z(x)\}_{x\in \mathcal{X}}$. So far, this
function was only assumed to guarantee the convergence of the
infinite product in (\ref{eq:norm}). Let us now adjust it to a given
curve $\gamma\in\mathfrak{G}$.

Let $\varGamma(t)$ denote the part of the polygonal line
$\varGamma\in\CP$ where the slope of edges does not exceed
$t\in[0,\infty]$. Set $\mathcal{X}(t):=\{x\in \mathcal{X}:
\tau(x)\le t\}$. Recalling the association $\varGamma\leftrightarrow
\nu$ described in Section \ref{sec3}, the polygonal line
$\varGamma(t)$ is determined by the truncated configuration
$\nu(x)\mypp{\mathbf 1}_{\mathcal{X}(t)}(x)$. Denote by
$\xi(t)=(\xi_1(t),\xi_2(t))$ the right endpoint of $\varGamma(t)$,
\begin{align}\label{eq:u,v_Gamma}
\xi_j(t)=\sum_{x\in \mathcal{X}(t)}x_j\myp\nu(x)\qquad (j=1,2),
\end{align}
and by $\ell_\varGamma(t)$ its length,
\begin{align}\label{ell_Gamma}
\ell_{\varGamma}(t)=\sum_{x\in \mathcal{X}(t)}|x|\mypp\nu(x).
\end{align}

Let us impose the following calibration condition,
% on the function $z(x)$,
\begin{align}\label{lim}
\lim_{n\to\infty}n_1^{-1}\EE_{z}^{\gamma}[\ell_{\varGamma}(t)]=
\ell_\gamma(t),\qquad 0\le t\le \infty,
\end{align}
where $\EE_z^{\gamma}$ stands for the expectation with respect to
the measure $\QQ_z^{\gamma}$ and $\ell_\gamma(t)$ is the
corresponding length function associated with a given curve
$\gamma$. More specifically, denote $\rho_n:=c_\gamma/c_n\to 1$ and
set $\tilde x:=(x_1,\rho_n\myp x_2)$ for $x=(x_1,x_2)$. We will seek
$z_1(x)$, $z_2(x)$ in the form
\begin{equation}\label{z}
z_j(x)=\exp\left\{-\alpha_n\myp\delta_j(\tau(\tilde
x))\right\}\qquad (j=1,2),
\end{equation}
where
\begin{equation}\label{eq:alpha}
\alpha_n:=(\rho_n\myp n_1)^{-1/3}\to 0,\qquad \tau(\tilde x)=\tilde
x_2/\tilde x_1=\rho_n\myp x_2/x_1,
\end{equation}
and $\delta(t)=(\delta_1(t),\delta_2(t))$ is a function on
$[0,\infty]$ such that
\begin{equation}\label{ogd}
\inf_{0\le t\le\infty}\delta_j(t)\ge\delta_{*\myn}>0\qquad (j=1,2).
\end{equation}

\begin{remark}\label{as1}
Note that the right endpoint of the scaled polygonal line
$\tilde\varGamma_n:=n_1^{-1}\varGamma$ ($\varGamma\in\CP$) has the
coordinates $(1,c_n)$, where $c_n:=n_2/n_1$, whereas the right
endpoint of the arc $\gamma$ lies at the point $(1,c_\gamma)$, where
$c_\gamma:=g_\gamma(1)$ \,($0<c_\gamma<\infty$). Hence, in order for
relation (\ref{LLNappr}) to be true, it is natural to pass to the
limit $n\to\infty$ in such a way that $c_n\to c_\gamma$. In what
follows, we will always be assuming that this condition is
fulfilled.
%This suggests that, in order for relation
%(\ref{LLNappr}) to be true, we need to pass to the limit
%$n\to\infty$ in such a way that $c_n\to c_\gamma$. In what follows,
%we will always be assuming that this condition is
%fulfilled.
\end{remark}
According to (\ref{eq:nu}) and (\ref{ell_Gamma}), and using notation
(\ref{z}), we have
%$$
%\EE_{z}^{\gamma}[\nu(x)]== \sum_{k=1}^\infty z^{kx}.
%$\ell_{\varGamma}(t)$ is given by
\begin{equation}\label{per+}
\EE_{z}^{\gamma}[\ell_{\varGamma}(t)]=\sum_{x\in \mathcal{X}(t)}
|x|\sum_{k=1}^\infty z^{kx}
%\sum_{k=1}^\infty
%   |x|\mypp z^{kx}
=\sum_{k=1}^\infty\sum_{x\in \mathcal{X}(t)} |x|\, \rme^{-\alpha_n
k\langle \tilde x,\myp\delta(\tau(\tilde x))\rangle}.
\end{equation}

 To deal with sums over
the sets $\calX(t)\subset \calX$, the following lemma will be
instrumental. Recall that the \textit{M\"obius function} $\mu(m)$
($m\in\NN$) is defined as follows: $\mu(1):=1$, \,$\mu(m):=(-1)^{d}$
if $m$ is a product of $d$ different prime numbers, and $\mu(m):=0$
otherwise (see \cite[\S16.3, p.\:234]{HW}); in particular,
$|\mu(m)|\le 1$ for all $m\in\NN$.
\begin{lemma}\label{lm:Mobius}
Let $f:\RR_+^2\to\RR$ be a function such that $f(0,0)=0$ and
\begin{equation}\label{eq:sum_sum<}
\sum_{k=1}^\infty\sum_{x\in\ZZ_+^2} |f(hkx)|<\infty,\qquad h>0.
\end{equation}
For $h>0$, consider the functions
\begin{gather}\label{f+lemma}
F(h):=
%\sum_{m=1}^\infty F^\sharp(hm) =
\sum_{m=1}^\infty\sum_{x\in
\calX} f(hmx),\qquad
%\label{eq:F_lemma}
F^\sharp(h):=\sum_{x\in \calX} f(hx).
\end{gather}
Then the following identities hold for all $h>0$
\begin{align}
%\label{F_lemma}
F(h)&= \sum_{x\in\ZZ^2_+}f(hx),\qquad \label{eq:Mobius}
F^\sharp(h)=\sum_{m=1}^\infty\mu(m)F(hm).
%,
\end{align}
%where $\mu(m)$ is the M\"obius function.
\end{lemma}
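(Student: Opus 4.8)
\textbf{Proof proposal for Lemma~\ref{lm:Mobius}.}

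The plan is to prove the two identities in \eqref{eq:Mobius} separately, the first by a direct reindexing and the second by M\"obius inversion, with the absolute convergence hypothesis \eqref{eq:sum_sum<} ensuring that all rearrangements are legitimate. For the first identity, I would start from the definition $F(h)=\sum_{m=1}^\infty\sum_{x\in\calX} f(hmx)$ and observe that the map $(m,x)\mapsto mx$ from $\NN\times\calX$ to $\ZZ_+^2\setminus\{(0,0)\}$ is a bijection: indeed, every nonzero $y\in\ZZ_+^2$ can be written uniquely as $y=mx$ with $m=\gcd(y_1,y_2)\in\NN$ and $x=y/m\in\calX$. Since $f(0,0)=0$, the missing point contributes nothing, so $F(h)=\sum_{y\in\ZZ_+^2} f(hy)$, which is \eqref{eq:Mobius}(left). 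The hypothesis \eqref{eq:sum_sum<} (with the $\calX$-sum being a subsum of the $\ZZ_+^2$-sum) guarantees that $F(h)$ is an absolutely convergent double series, so this reindexing is valid.

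For the second identity, I would substitute the just-proved formula $F(hm)=\sum_{x\in\ZZ_+^2} f(hmx)$ into the right-hand side of \eqref{eq:Mobius}(right) and interchange the order of summation (justified by absolute convergence: $\sum_m\sum_x |\mu(m)|\,|f(hmx)|\le\sum_m\sum_x|f(hmx)|<\infty$ by \eqref{eq:sum_sum<}, since $|\mu(m)|\le1$). This yields
\begin{equation*}
\sum_{m=1}^\infty\mu(m)F(hm)=\sum_{m=1}^\infty\sum_{x\in\ZZ_+^2}\mu(m)f(hmx)
=\sum_{y\in\ZZ_+^2\setminus\{0\}}f(hy)\sum_{m\,:\,m\mid\gcd(y_1,y_2)}\mu(m),
\end{equation*}
where in the last step I group terms according to $y=mx$ (again using the bijection $\NN\times\calX\to\ZZ_+^2\setminus\{0\}$, so that for fixed $y$ the admissible $m$ are exactly the divisors of $d:=\gcd(y_1,y_2)$, with $x=y/m\in\calX$ automatically). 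Now the classical identity $\sum_{m\mid d}\mu(m)=\bfI\{d=1\}$ collapses the inner sum, leaving precisely $\sum_{y\in\calX} f(hy)=F^\sharp(h)$, as required.

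The only genuinely delicate point is bookkeeping with the convergence: one must check that \eqref{eq:sum_sum<} really does cover every rearranged series appearing above, and in particular that $F(h)$, $F(hm)$ and $F^\sharp(h)$ are all absolutely convergent so that Fubini/Tonelli applies at each interchange — but since every series in sight is dominated termwise (up to the factor $|\mu(m)|\le1$) by $\sum_{k}\sum_{x\in\ZZ_+^2}|f(hkx)|$, this is routine. The arithmetic inputs — the $\gcd$-factorization bijection and $\sum_{m\mid d}\mu(m)=\bfI\{d=1\}$ (see \cite[\S16.3]{HW}) — are standard. No deeper obstacle is expected.
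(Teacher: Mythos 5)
Your proof is correct and follows essentially the paper's route: the first identity via the gcd-factorization $\ZZ_+^2=\bigsqcup_{m\ge 0} m\calX$, and the second via M\"obius inversion, which you prove in-line (Fubini plus $\sum_{m\mid d}\mu(m)=\mathbf{1}\{d=1\}$) where the paper instead cites Hardy--Wright after verifying the same absolute-convergence condition supplied by \eqref{eq:sum_sum<}. One parenthetical slip worth noting: after substituting $F(hm)=\sum_{x\in\ZZ_+^2}f(hmx)$, for a fixed nonzero $y$ and a divisor $m$ of $d=\gcd(y_1,y_2)$ the point $x=y/m$ lies in $\ZZ_+^2$ but belongs to $\calX$ only when $m=d$; this does not affect your displayed identity or the conclusion.
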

\begin{proof}
Recalling definition (\ref{eq:X}) of the set $\calX$, observe that
$\ZZ^2_+=\bigsqcup_{\myp m=0}^{\myp\infty} m\calX$; hence,
definition (\ref{f+lemma}) of $F(\cdot)$ is reduced to
(\ref{eq:Mobius}). Representation (\ref{eq:Mobius}) for
$F^\sharp(\cdot)$ follows from the M\"obius inversion formula (see
\cite[Theorem~270, p.\:237]{HW}), provided that $\sum_{k,\myp{}m}
|F^\sharp(hkm)|<\infty$. To verify the last condition, using
(\ref{f+lemma}) we obtain
%(cf.\ (\ref{eq:F_lemma}) and (\ref{F_lemma}))
\begin{align*}
\sum_{k,\myp{}m=1}^\infty |F^\sharp(kmh)|&\le \sum_{k=1}^\infty
\left(\sum_{m=1}^\infty\sum_{x\in \calX} |f(hkmx)|\right)
=\sum_{k=1}^\infty \sum_{x\in \ZZ^2_+} |f(hkx)|<\infty,
\end{align*}
according to (\ref{eq:sum_sum<}).
%This completes the proof.
\end{proof}

\begin{theorem}\label{th:1}
Suppose that the functions\/ $\delta_1(t)$\textup{,} $\delta_2(t)$
satisfy condition\/ \textup{(\ref{ogd})}. Then\textup{,} in order
that equation\/ \textup{(\ref{lim})} be fulfilled for all\/
$t\in[0,\infty]$\textup{,} it is necessary and sufficient that\/
\begin{align}
\label{eq:delta+infty}
\delta_j(t)&\equiv+\infty\quad (j=1,2),&&\hspace{-4pc}t<t_0,\ \ t>t_1,\\
\label{eq:delta-eqn} \delta_1(t)&+t\delta_2(t)=\kappa\myp
g''_\gamma(u_\gamma(t))^{1/3}, &&\hspace{-4pc}t_0<t<t_1,
\end{align}
where\/ $\kappa:=(2\zeta(3)/\zeta(2))^{1/3}$\textup{,}
$\zeta(s):=\sum_{k=1}^\infty 1/k^{s}$ is the Riemann zeta
function\textup{,} and the function\/ $u_\gamma(t)$ is given by\/
\textup{(\ref{eq:u(t)})}.
\end{theorem}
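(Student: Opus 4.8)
The plan is to analyze the expectation $\EE_z^\gamma[\ell_\varGamma(t)]$ in \eqref{per+} asymptotically as $n\to\infty$, using Lemma~\ref{lm:Mobius} to replace the sum over the co-prime set $\calX(t)$ by a sum over all of $\ZZ_+^2$, and then recognize the latter as a Riemann sum for an integral; matching this limit against $\ell_\gamma(t)$ in \eqref{ell} will force conditions \eqref{eq:delta+infty}--\eqref{eq:delta-eqn}. First I would apply Lemma~\ref{lm:Mobius} with $h=\alpha_n$ and the function $f(y):=|y|\,\bfI\{\tau(y)\le t\}\sum_{k\ge1}\rme^{-k\langle \tilde y,\delta(\tau(\tilde y))\rangle}$ (suitably interpreting the slope and the rescaling $\tilde y$), checking that the summability hypothesis \eqref{eq:sum_sum<} holds thanks to the lower bound \eqref{ogd}, which makes each geometric series converge uniformly and gives exponential decay in $|y|$. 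This rewrites $n_1^{-1}\EE_z^\gamma[\ell_\varGamma(t)]$ as $\alpha_n^{2}\rho_n^{-1}$ times (up to the M\"obius-corrected constant $\sum_m \mu(m)m^{-2}=1/\zeta(2)$ coming from the co-primality restriction, and the $\sum_k k^{-1}\cdot$-type factor from the geometric sums) a Riemann-type sum over $\ZZ_+^2$ with mesh $\alpha_n\to0$.

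The second step is to pass from the lattice sum to the integral. Writing points of the cone $\{\tau(y)\le t\}$ in polar-like coordinates adapted to the slope $s=y_2/y_1$, a point $y=(y_1,y_2)$ has $|y|=y_1\sqrt{1+s^2}$ and the inner product $\langle \tilde y,\delta(\tau(\tilde y))\rangle$ becomes (after absorbing $\rho_n\to1$) $y_1\bigl(\delta_1(s)+s\,\delta_2(s)\bigr)$. Summing the geometric series in $k$ and then performing the Riemann-sum limit in the radial variable $y_1$ (with $\alpha_n$ the mesh) produces an integral of the form
\begin{equation*}
\ell_\gamma(t)\overset{!}{=}\frac{\zeta(3)}{\zeta(2)}\cdot 2\int_{0}^{t}\frac{\sqrt{1+s^2}}{\bigl(\delta_1(s)+s\,\delta_2(s)\bigr)^{3}}\;\dif s,
\end{equation*}
the factor $2\zeta(3)$ arising from $\sum_{k\ge1}\sum_{j\ge1} j\,\rme^{-kj(\cdots)}$-type manipulations (equivalently $\int_0^\infty r\sum_{k\ge1}\rme^{-kr}\,\dif r=\zeta(2)$ is not quite it — one gets $\Gamma(3)\zeta(3)=2\zeta(3)$ from the radial integral against the geometric sum), combined with the $1/\zeta(2)$ from M\"obius. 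Differentiating both sides in $t$ and comparing with $\dif\ell_\gamma/\dif t=\sqrt{1+t^2}\,u_\gamma'(t)$ from \eqref{ell}, and using that on $(t_0,t_1)$ one has $u_\gamma'(t)=1/g_\gamma''(u_\gamma(t))$ (inverse-function rule applied to $t=g_\gamma'(u)$), yields
\begin{equation*}
\bigl(\delta_1(t)+t\,\delta_2(t)\bigr)^{3}=\frac{2\zeta(3)}{\zeta(2)}\,g_\gamma''(u_\gamma(t)),
\end{equation*}
i.e.\ exactly \eqref{eq:delta-eqn} with $\kappa=(2\zeta(3)/\zeta(2))^{1/3}$.

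For the regime $t<t_0$ or $t>t_1$: here $\ell_\gamma(t)$ is constant (namely $0$ and $\ell_\gamma(\infty)$ respectively), so $\dif\ell_\gamma/\dif t=0$, which forces the integrand above to vanish, i.e.\ $\delta_1(s)+s\,\delta_2(s)=+\infty$; since each $\delta_j\ge\delta_*>0$ this is equivalent to \eqref{eq:delta+infty}. Conversely, assuming \eqref{eq:delta+infty}--\eqref{eq:delta-eqn}, one runs the same asymptotic computation to verify \eqref{lim} holds for every $t$; the ``trivial'' slopes contribute exponentially small terms because $\delta_j\equiv+\infty$ there kills the corresponding summands. I expect the main obstacle to be making the Riemann-sum passage \emph{uniform in $t\in[0,\infty]$} — in particular controlling the boundary layers near $t=t_0$ and $t=t_1$ (where $g_\gamma''(u_\gamma(t))$ may degenerate or $u_\gamma$ may have a corner), and justifying the interchange of the $k$-sum, the $m$-sum (M\"obius), and the limit $\alpha_n\to0$; the lower bounds \eqref{ogd} and the curvature bound \eqref{kr} from Assumption~\ref{as2} are what make these estimates go through, via dominated convergence with an integrable exponential majorant.
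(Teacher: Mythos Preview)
Your proposal is correct and follows essentially the same route as the paper: apply Lemma~\ref{lm:Mobius} to pass from the coprime sum to a full $\ZZ_+^2$ sum, recognize the latter as a Riemann sum (with mesh $\alpha_n$) for a double integral, change to slope coordinates $s=x_2/x_1$ to obtain $\kappa^3\int_0^t\sqrt{1+s^2}\,(\delta_1(s)+s\delta_2(s))^{-3}\,\dif s=\ell_\gamma(t)$, and differentiate. Two small remarks: first, the statement is pointwise in $t$, so your worry about uniformity on $[0,\infty]$ is unnecessary here (that is handled separately in Theorem~\ref{rs}); second, the paper keeps the geometric index $k$ outside the function $f$ and sums over $k$ after the M\"obius step, which makes the dominated-convergence justification (via the bound $F(h)=O(\alpha_n^{-3}h^{-2})$) slightly cleaner than bundling $\sum_k$ into $f$ as you do, but the two organizations are equivalent.
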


\begin{proof}
% and $\langle
%y,y'\rangle:=y_1y'_1+y_2\myp y'_2$ denotes the inner product of
%vectors $y=(y_1,y_2)$, \,$y'=(y'_1,y'_2)$.
%Note that interchanging the order of summations over $x$ and $k$ in
%(\ref{per+}) is legitimate since these are series with non-negative terms.
%Substituting here the representation we obtain
%\EE_z^{\gamma}[\ell_{\varGamma}(t)]%\notag\\
%&=\sum_{k=1}^\infty\sum_{x\in \mathcal{X}(t)}
%|x|\,\rme^{-k\langle x,\,\alpha_n\delta(\rho_n\tau(x))\rangle},
%+ \sum_{k=1}^\infty \rme^{-k\alpha_n_1\delta_1(0)},
%\end{align}
Let us set
\begin{equation}\label{f+}
f(x):=|x|\,\rme^{-\alpha_n \langle
  \tilde x,\myp\delta(\tau(\tilde x))\rangle}\mypp{\bf1}_{\calX(t)}(x),\qquad x\in\RR^2_+\myp,
\end{equation}
suppressing for simplicity the dependence on $t$ and $n$. Following
notations (\ref{f+lemma}) of Lemma \ref{lm:Mobius}, equation
(\ref{per+}) is rewritten in the form
\begin{equation}\label{E_1F}
\EE_{z}^{\gamma}[\ell_{\varGamma}(t)] =\sum_{k=1}^\infty k^{-1}
F^\sharp(k),
\end{equation}
whereas from (\ref{eq:Mobius}) we have
\begin{align}\label{F}
F(h)&=\sum_{x_1=0}^\infty\,\sum_{0\le x_2\le tx_1}
h|x|\,\rme^{-\alpha_n h \langle \tilde x,\myp\delta(\tau(\tilde
x))\rangle}\\
\notag &\le h\sum_{x_1\myn,\myp x_2=0}^\infty (x_1+x_2) \mypp
\rme^{-\alpha_n h\delta_{*}(x_1+x_2)/2}= h\sum_{y=0}^\infty y^2\mypp
\rme^{-\alpha_n h\delta_{*\myn} y/2}\\
&=h\,\frac{\rme^{-\alpha_n h\delta_{*\myn}/2}+\rme^{-\alpha_n
h\delta_{*}}} {(1-\rme^{-\alpha_n
h\delta_{*\myn}/2})^3}=O(1)\,\alpha_n^{-3} h^{-2}.
\label{eq:hkm}
\end{align}
%where we used that $y_1+\rho_n y_2\ge (y_1+y_2)/2 $ for all $n$
%large enough, since $\rho_n\to1$ as $n\to\infty$.
In particular, this gives $F(hk)=O(k^{-2})$, uniformly in $k\in\NN$,
and it follows that condition (\ref{eq:sum_sum<}) of Lemma
\ref{lm:Mobius} is satisfied. Hence, using
%Lemma \ref{lm:Mobius} and
(\ref{eq:Mobius}) and (\ref{F}) and recalling that
$n_1^{-1}=\rho_n\myp\alpha_n^3$, from (\ref{E_1F}) with $h=k$ we
obtain
\begin{align}
\notag n_1^{-1}\EE_{z}^{\gamma}[\ell_{\varGamma}(t)]&=
\rho_n\alpha_n^3\sum_{k,\myp m=1}^\infty m\mu(m)F(km)\\
\label{per-mu} &=\rho_n\sum_{k,\myp m=1}^\infty m\mu(m)
\sum_{x_1=1}^\infty\,\sum_{0\le x_2\le tx_1}
\alpha_n^3|x|\,\rme^{-km\alpha_n\langle\tilde
x,\myp\delta(\tau(\tilde{x}))\rangle}.
\end{align}

Taking into account estimate (\ref{eq:hkm}),
% and using that $|\mu(m)|\le1$,
we see that the general term in the double sum over $k,m$ in
(\ref{per-mu}) admits a uniform bound of the form
$O(1)\,k^{-3}m^{-2}$, which is a term of a convergent series.
Therefore, we can apply Lebesgue's dominated convergence theorem to
pass to the limit in (\ref{per-mu}) termwise, as $n\to\infty$ (i.e.,
$\alpha_n\to0$). In order to find this limit, note that the internal
double series over $x_1$, $x_2$ in (\ref{per-mu}) is a Riemann sum
for the integral
\begin{equation}\label{int}
\iint_{0\le x_2\le tx_1}\!\sqrt{x_1^2+x_2^2}\,\, \rme^{-km\myp
\bigl(x_1\delta_1(x_2/x_1)+x_2\myp\delta_2\myp(x_2/x_1)\bigr)}\,\dif
x_1 \dif x_2.
\end{equation}
Moreover, this sum does converge to integral (\ref{int}) as
$\alpha_n\to0$, since the integrand function in (\ref{int}) is
directly Riemann integrable, as follows from an estimation similar
to (\ref{eq:hkm}).

By the change of variables $y_1=u$, $y_2=us$ integral (\ref{int})
%can be easily evaluated . The Jacobian of this
%transformation equals $
%  \textstyle\det\left(\begin{array}{cc} \!1&\!0\!\\\!s&\!u\!\end{array}
%  \right)=u,
%$
is reduced to
\begin{equation*}
\int_0^t \sqrt{1+s^2}\left(\int_0^\infty
u^2\,\rme^{-kmu\myp(\delta_1(s)+s\delta_2(s))}\,\dif u \right)
\dif{s}=\frac{2}{(km)^3}\int_0^t\frac{\sqrt{1+s^2}}
{\bigl(\delta_1(s)+s\delta_2(s)\bigr)^3}\,\dif s.
\end{equation*}
Substituting this into (\ref{per-mu}) we get
\begin{align}
\notag
  \lim_{n\to\infty} n_1^{-1}\EE_{z}^{\gamma}[\ell_{\varGamma}(t)]&=
2\sum_{k=1}^\infty\frac{1}{k^3}\sum_{m=1}^\infty \frac{\mu(m)}{m^2}
\int_0^t\frac{\sqrt{1+s^2}}
{\bigl(\delta_1(s)+s\delta_2(s)\bigr)^3}\,\dif s\\
&=\frac{2\zeta(3)}{\zeta(2)} \int_0^t\frac{\sqrt{1+s^2}}
{\bigl(\delta_1(s)+s\delta_2(s)\bigr)^3}\,\dif s,
\label{eq:sqrt(1+s2)}
\end{align}
where we used the identity $\sum_{m=1}^\infty
m^{-2}\mu(m)=\zeta(2)^{-1}$, which readily follows by the M\"obius
inversion formula (\ref{eq:Mobius}) applied to $F^\sharp(h)=h^{-2}$,
\,$F(h)=\sum_{m=1}^\infty (hm)^{-2} =h^{-2}\myp\zeta(2)$ \,(cf.\
\cite[\S17.5, Theorem~287, p.\;250]{HW}). Recalling the notation
$\kappa$ introduced in Theorem \ref{th:1} and using condition
(\ref{lim}), from (\ref{eq:sqrt(1+s2)}) we obtain
\begin{align}\label{fu1}
\kappa^3\!\int_0^t\frac{\sqrt{1+s^2}}
{\bigl(\delta_1(s)+s\delta_2(s)\bigr)^3} \,\dif
s=\ell_{\gamma}(t)\myp,\qquad 0\le t\le \infty.
\end{align}
According to definitions (\ref{u_g}) and (\ref{ell}), we have
$\ell_\gamma(t)\equiv0$ for $t\in[0,t_0)$ and
$\ell_\gamma(t)\equiv\ell_\gamma(\infty)$ for $t\in(t_1,\infty]$,
while for $t\in(t_0,t_1)$ the derivative $\ell'_\gamma(t)$ is
determined by formula (\ref{du}). Hence, differentiating identity
(\ref{fu1}) with respect to $t$, we obtain  (\ref{eq:delta+infty})
and (\ref{eq:delta-eqn}).
\end{proof}

Let us now check that equation (\ref{eq:delta-eqn}) has a suitable
solution.

\begin{proposition}\label{pr:delta}
For\/ $t\in [t_0,t_1]$ let us set
\begin{equation}\label{eq:delta12}
\delta_1(t):=\kappa\mypp\varkappa_\gamma(t)^{1/3}\,
\frac{c_\gamma\myp\sqrt{1+t^2}}{c_\gamma+t}
%\equiv \frac{\kappa\myp c_\gamma\myp
%g''_\gamma(u_\gamma(t))^{1/3}}{c_\gamma+t}
\myp,\qquad \delta_2(t):=
%\kappa\mypp\varkappa_\gamma(t)^{1/3}\,
%\frac{\sqrt{1+t^2}}{c_\gamma+t} \equiv
\frac{\delta_1(t)}{c_\gamma}\myp,
%=\frac{\kappa\myp g''_\gamma(u_\gamma(t))^{1/3}}{c_\gamma+t}
\end{equation}
where\/ $c_\gamma=g_\gamma(1)$ and the curvature
$\varkappa_\gamma(t)$ is given by\/ \textup{(\ref{eq:curvature1})}.
Then the functions\/ $\delta_1(t)$\textup{,} $\delta_2(t)$ satisfy
assumption\/ \textup{(\ref{ogd})} and equation\/
\textup{(\ref{eq:delta-eqn})}.
\end{proposition}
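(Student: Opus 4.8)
The plan is to verify the two assertions of Proposition~\ref{pr:delta} directly from the explicit formulas~\eqref{eq:delta12}, treating them separately.

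First I would check equation~\eqref{eq:delta-eqn}. From the definitions we have $\delta_1(t)+t\myp\delta_2(t)=\delta_1(t)\myp(1+t/c_\gamma)=\delta_1(t)\myp(c_\gamma+t)/c_\gamma$, and substituting the formula for $\delta_1(t)$ the factor $(c_\gamma+t)/c_\gamma$ cancels, leaving
\[
\delta_1(t)+t\myp\delta_2(t)=\kappa\myp\varkappa_\gamma(t)^{1/3}\sqrt{1+t^2}.
\]
It then remains to identify this with the right-hand side of~\eqref{eq:delta-eqn}, namely $\kappa\myp g''_\gamma(u_\gamma(t))^{1/3}$. This is immediate from the reduced curvature formula~\eqref{eq:curvature1}, which gives $\varkappa_\gamma(t)=g''_\gamma(u_\gamma(t))/(1+t^2)^{3/2}$, so that $\varkappa_\gamma(t)^{1/3}\sqrt{1+t^2}=g''_\gamma(u_\gamma(t))^{1/3}$. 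Hence~\eqref{eq:delta-eqn} holds for $t\in(t_0,t_1)$.

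Next I would verify the lower bound~\eqref{ogd}. Since $\delta_2=\delta_1/c_\gamma$ and $0<c_\gamma<\infty$, it suffices to bound $\delta_1(t)$ from below by a positive constant uniformly in $t\in[t_0,t_1]$. Inspecting~\eqref{eq:delta12}, the constant $\kappa$ is positive, and on the compact interval $[t_0,t_1]$ (note $t_0<t_1$ by strict convexity and $t_1<\infty$ since $g_\gamma\in C^2[0,1]$) the continuous function $c_\gamma\sqrt{1+t^2}/(c_\gamma+t)$ is bounded below by a positive constant. For the curvature factor, Assumption~\ref{as2} — specifically the lower bound~\eqref{kr}, $\inf_u\varkappa_\gamma(u)\ge K_0>0$ — gives $\varkappa_\gamma(t)^{1/3}\ge K_0^{1/3}>0$ uniformly. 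Multiplying these bounds yields $\delta_1(t)\ge\delta_{*}>0$ on $[t_0,t_1]$, and recalling the convention $\delta_j(t)\equiv+\infty$ outside this interval (as dictated by~\eqref{eq:delta+infty}), condition~\eqref{ogd} follows on all of $[0,\infty]$.

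I do not expect any genuine obstacle here: the proposition is a direct computation. The only point requiring a little care is to be explicit about which hypotheses are being invoked — the positivity and finiteness of $c_\gamma$ from Remark~\ref{as1}, the strict convexity and $C^2$-smoothness from Assumption~\ref{as2} guaranteeing $t_0<t_1<\infty$, and most crucially the uniform curvature bound~\eqref{kr}, without which $\delta_1(t)$ could degenerate to zero and~\eqref{ogd} would fail. This is precisely why the curvature lower bound appears in the statement of the main Theorem~\ref{th:main1}.
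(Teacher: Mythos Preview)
Your proof is correct and follows exactly the approach indicated in the paper, which merely says the verification of \eqref{eq:delta-eqn} is straightforward and that the lower bound \eqref{ogd} follows from assumption \eqref{kr}. You have simply written out these two steps in full detail.
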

\begin{proof}
It is straightforward to verify that equation (\ref{eq:delta-eqn})
is satisfied. A lower bound of the form (\ref{ogd})
%for the functions (\ref{eq:delta12}) immediately
follows from assumption (\ref{kr}).
\end{proof}

\begin{remark}
In the ``classical'' case, where the curve $\gamma=\gamma^*$ is
determined by  equation (\ref{eq:gamma*}), it is easy to check that
the corresponding curvature (see (\ref{eq:curvature})) is given by
$$
\varkappa_{\gamma^*}(t)=\frac{c\mypp
(1+t/c)^3}{2\myp(1+t^2)^{3/2}},\qquad 0\le t\le\infty.
$$
Hence, expressions (\ref{eq:delta12}) are reduced to the constants
%\begin{align}\label{klass}
%\delta_1(t)+t\delta_2(t)=
%\kappa(2c^2)^{-1/3}(c+t),
%\end{align}
%whence we find
%\begin{align*}
$\delta_1 =\kappa\myp(c/2)^{1/3}$, \,$\delta_2 =\delta_1/c$ \,(cf.\
\cite{BZ4}).
%= \kappa\left(\frac{1}{2\myp c^2}\right)^{1/3}$.
%\end{align*}
%(cf.\ Sinai 1994 and Bogachev and Zarbaliev 1999b).
\end{remark}

%A bit from pre4.tex:
%$\ell_\varGamma(t)$ denote the length of the part of the polygonal line $\varGamma$, for
%which the slope of edges does not exceed $t$ (cf.\ (\ref{ell})), that is,
%\begin{align}\label{ell_Gamma}
%\ell_{\varGamma}(t)=\sum_{x\in \mathcal{X}(t)}|x|\,\nu(x),
%\end{align}
%where $|x|=\sqrt{x_1^2+x_2^2}$ and
%
%Obviously, under the scaling transformation
%$\varGamma\mapsto n_1^{-1}\varGamma$ we have
%$$
%  \ell_{n_1^{-1}\varGamma}(t)=n_1^{-1}\ell_{\varGamma}(t),\qquad
%  0\le t\le\infty.
%$$

\begin{assumption}\label{as:z}
Throughout the rest of the paper, we assume that the parameters
$z_1(x)$\textup{,} $z_2(x)$ $(x\in \mathcal{X})$ are chosen
according to formulas\/ \textup{(\ref{z})} with the functions\/
$\delta_1(t)$\textup{,} $\delta_2(t)$ given by\/
\textup{(\ref{eq:delta+infty})}\textup{,}
\textup{(\ref{eq:delta12})}. In particular, the measure
$\QQ^\gamma_z$ becomes dependent on $n=(n_1,n_2)$, as well as the
$\QQ^\gamma_z$-probabilities and the corresponding expected values.
\end{assumption}

\section{Asymptotics of the expectation}\label{sec5}

In this section, we derive a few corollaries from the above choice
of $z_1(x)$, $z_2(x)$, assuming throughout that Assumptions
\ref{as2} and \ref{as:z} are satisfied.
%that we have made.
%Our first goal is to show that
%convergence in (\ref{lim}) is uniform in~$t$.

\begin{theorem}\label{rs}
The convergence in\/ \textup{(\ref{lim})} is uniform in\/
$t\in[0,\infty]$\textup{,}
\begin{align}\label{zrs}
\lim_{n\to\infty} \sup_{0\le t\le\infty}
|n_1^{-1}\EE_z^{\gamma}[\ell_{\varGamma}(t)]-\ell_{\gamma}(t)|=0.
\end{align}
\end{theorem}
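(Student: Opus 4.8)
The plan is to revisit the computation in the proof of Theorem~\ref{th:1} and track the error terms uniformly in $t$. Recall that there we obtained the exact representation
\begin{equation*}
n_1^{-1}\EE_{z}^{\gamma}[\ell_{\varGamma}(t)]
=\rho_n\sum_{k,\myp m=1}^\infty m\mu(m)\,\alpha_n^3
\sum_{x_1=1}^\infty\,\sum_{0\le x_2\le tx_1}
|x|\,\rme^{-km\alpha_n\langle\tilde x,\myp\delta(\tau(\tilde x))\rangle},
\end{equation*}
and that the inner double sum over $(x_1,x_2)$ is a Riemann sum (with mesh $\alpha_n$ along each axis) for the integral (\ref{int}), which evaluates to $2(km)^{-3}\int_0^t\sqrt{1+s^2}\,\bigl(\delta_1(s)+s\delta_2(s)\bigr)^{-3}\,\dif s$. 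The right-hand side of (\ref{zrs}), $\ell_\gamma(t)$, is exactly the limit of this expression pointwise by Theorem~\ref{th:1} together with the choice of $\delta_j$ in Proposition~\ref{pr:delta}. So everything reduces to showing that the convergence is uniform in $t$.

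First I would split the difference $n_1^{-1}\EE_{z}^{\gamma}[\ell_{\varGamma}(t)]-\ell_\gamma(t)$ into (i) the tail of the double series over $k,m$, and (ii) the finite-sum part where, for each fixed $(k,m)$, we compare the Riemann sum with its integral. For (i), the uniform bound $F(hk)=O(k^{-2})$ established via (\ref{eq:hkm}) — which holds uniformly in $t$ because it came from dropping the constraint $x_2\le tx_1$ and using only the lower bound $\delta_j\ge\delta_*$ — shows the general term is $O(k^{-3}m^{-2})$ uniformly in $t$ and $n$, so the tail $\sum_{k+m>M}$ is $O(M^{-1})$ uniformly in $t$; this makes the tail negligible before passing to the limit. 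For (ii), the Riemann-sum error for the integral in (\ref{int}) must be controlled uniformly in $t\in[0,\infty]$: since the integrand is dominated by $\sqrt{x_1^2+x_2^2}\,\rme^{-km\delta_*(x_1+x_2)/2}$ uniformly in $t$ (the parameter $t$ only restricts the region $0\le x_2\le t x_1$, i.e.\ the angular sector), one can bound the discrepancy between $\alpha_n^3\sum_{0\le x_2\le tx_1}(\cdots)$ and $\iint_{0\le x_2\le tx_1}(\cdots)$ by a quantity of order $O(\alpha_n)$, plus a boundary-layer contribution from the lattice points near the line $x_2=tx_1$, which is again $O(\alpha_n)$ by the exponential decay — all uniform in $t$ and in $(k,m)$ after folding in the $k^{-3}m^{-2}$ factor. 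Combining (i) and (ii) and letting $n\to\infty$ yields (\ref{zrs}).

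The main obstacle is the uniform control of the Riemann-sum error near the edge of the sector, i.e.\ near the line $x_2 = t x_1$, for $t$ ranging over all of $[0,\infty]$ including the endpoints $t=0$ and $t=\infty$; one must check that the number and weight of lattice points within distance $O(1)$ of this line, summed against the exponentially decaying factor, contribute only $O(\alpha_n)$ regardless of the slope $t$ — in particular the degenerate cases $t\to0$ (a thin sliver along the $x_1$-axis) and $t\to\infty$ (almost the full quadrant) need to be handled by the same exponential-decay estimate rather than by a $t$-dependent bound. Once this is in place the rest is a routine triangle-inequality argument: choose $M$ to kill the series tail, then $n$ large to kill the finitely many Riemann-sum errors.
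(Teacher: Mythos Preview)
Your approach is sound, but the paper takes a genuinely different and considerably shorter route. Instead of re-opening the Riemann-sum computation and tracking the error uniformly in $t$, the paper exploits the fact that for each fixed $n$ the function $t\mapsto n_1^{-1}\EE_z^{\gamma}[\ell_{\varGamma}(t)]$ is \emph{non-decreasing} in $t$, as is the continuous limit $\ell_\gamma(t)$. It then invokes the elementary Lemma~\ref{lm:8.1} (a P\'olya-type statement: pointwise convergence of monotone functions to a continuous limit is automatically uniform on compact intervals), which immediately upgrades the pointwise convergence of Theorem~\ref{th:1} to uniform convergence on any $[0,t^*]$. The remaining tail is handled by a direct estimate showing that the expected arc length carried by slopes exceeding $t^*$ is $O(1/(t^*+1))$ uniformly in $n$ --- a short calculation using only the bound $\delta_j\ge\delta_*$ and the observation that at most $y/(t+1)$ lattice pairs satisfy $x_1+x_2=y$ with $x_2>tx_1$.

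In comparison, your quantitative route would yield an explicit rate (roughly $O(\alpha_n)$) and is self-contained, but it requires the boundary-layer analysis you flag, as well as uniform control of the interior discretization error; the paper's soft argument sidesteps all of this by using monotonicity and the already-established pointwise limit, at the cost of giving no rate.
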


We will use the following simple criterion
%of uniform convergence
(see \cite[Lemma 4.3]{BZ4}).
%Lemma~8.3
\begin{lemma}\label{lm:8.1}
Let\/ $\{f_n(t)\}$ be a sequence of non-decreasing functions on a
finite interval\/ $[a,b\myp]$\textup{,} such that\textup{,} for
each\/ $t\in[a,b\myp]$\textup{,}
$\lim_{n\to\infty}f_n(t)=f(t)$\textup{,} where\/ $f(t)$ is a
continuous \textup{(}non-decreasing\textup{)} function on\/
$[a,b\myp]$. Then\/ $f_n(t)\to f(t)$ uniformly on\/ $[a,b\myp]$.
\end{lemma}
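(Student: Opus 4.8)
The plan is to prove this by the classical ``sandwiching'' argument underlying P\'olya's theorem on uniform convergence of monotone functions to a continuous limit. First I would use the compactness of $[a,b\myp]$ together with the continuity of $f$: since $f$ is (uniformly) continuous on $[a,b\myp]$, given $\varepsilon>0$ there is $\delta>0$ such that $|f(s)-f(t)|<\varepsilon$ whenever $s,t\in[a,b\myp]$ and $|s-t|\le\delta$. Fix a partition $a=t_0<t_1<\dots<t_N=b$ of mesh $\max_i(t_i-t_{i-1})\le\delta$, so that $0\le f(t_i)-f(t_{i-1})<\varepsilon$ for each $i=1,\dots,N$ (using that $f$ is non-decreasing). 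Then, invoking the pointwise convergence $f_n(t_i)\to f(t_i)$ at the \emph{finitely many} points $t_0,\dots,t_N$, choose $n_0$ so large that $|f_n(t_i)-f(t_i)|<\varepsilon$ for all $n\ge n_0$ and all $i=0,\dots,N$.

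Next I would transfer this control from the grid to the whole interval by monotonicity of $f_n$. For an arbitrary $t\in[a,b\myp]$ pick the index $i$ with $t_{i-1}\le t\le t_i$; then $f_n(t_{i-1})\le f_n(t)\le f_n(t_i)$, and for $n\ge n_0$ the upper estimate gives
\[
f_n(t)\le f_n(t_i)<f(t_i)+\varepsilon\le f(t_{i-1})+2\varepsilon\le f(t)+2\varepsilon,
\]
where the last two inequalities use $f(t_i)-f(t_{i-1})<\varepsilon$ and $f(t_{i-1})\le f(t)$ respectively. Symmetrically, the lower estimate gives
\[
f_n(t)\ge f_n(t_{i-1})>f(t_{i-1})-\varepsilon\ge f(t_i)-2\varepsilon\ge f(t)-2\varepsilon.
\]
Hence $\sup_{t\in[a,b\myp]}|f_n(t)-f(t)|\le 2\varepsilon$ for all $n\ge n_0$, and since $\varepsilon>0$ was arbitrary, $f_n\to f$ uniformly on $[a,b\myp]$.

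As for the main obstacle: there is none of real substance, the result being a one-page Dini-type statement. The only point that genuinely needs care is the construction of a partition on which $f$ oscillates by at most $\varepsilon$, and this is precisely where the continuity of the \emph{limit} function is indispensable — a jump of $f$ could not be absorbed into the $O(\varepsilon)$ error, and the conclusion would indeed fail (e.g.\ $f_n=\mathbf{1}_{[1/n,\,\infty)}$ converges pointwise to a non-decreasing but discontinuous step function, with no uniform convergence near $0$). The monotonicity hypothesis on the $f_n$ enters only through the elementary two-sided bound $f_n(t_{i-1})\le f_n(t)\le f_n(t_i)$, which is what lets one reduce uniform control of $f_n$ on all of $[a,b\myp]$ to pointwise control at the finitely many partition points.
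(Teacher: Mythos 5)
Your proof is correct and is exactly the standard P\'olya-type sandwiching argument (uniform continuity of the limit, a finite partition of mesh $\delta$, pointwise convergence at the grid points, and the monotonicity bound $f_n(t_{i-1})\le f_n(t)\le f_n(t_i)$); the paper itself gives no inline proof but delegates to \cite[Lemma 4.3]{BZ4}, where the argument is the same. Nothing is missing.
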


\begin{proof}[Proof of Theorem \textup{\ref{rs}}]
Note that for each $n$ the function
$$
f_n(t):=n_1^{-1}\EE_z^{\gamma}[\ell_{\varGamma}(t)]=
\frac{1}{n_1}\sum_{x\in \mathcal{X}(t)} |x| \EE_z^{\gamma}[\nu(x)]
$$
is non-decreasing in $t$ and the limiting function
$f(t):=\ell_{\gamma}(t)$ given by (\ref{ell}) is continuous on
$[0,\infty]$. Hence, by Lemma \ref{lm:8.1} the convergence in
(\ref{zrs}) is uniform in $t$ on every finite interval $[0,t^*]$. To
complete the proof, it suffices to check that for any
$\varepsilon>0$ and for large enough $n$, there exists $t^*<\infty$
such that for all $t\ge t^*$
\begin{equation}\label{e}
n_1^{-1}\EE_z^{\gamma}[\ell_{\varGamma}(\infty)-\ell_{\varGamma}(t)]
\le\varepsilon.
\end{equation}

Using (\ref{per+}), similarly to (\ref{eq:hkm}) we can write
\begin{align}
\notag
\EE_z^{\gamma}[\ell_{\varGamma}(\infty)-\ell_{\varGamma}(t)]&=
%=\sum_{x\in \mathcal{X}\setminus \mathcal{X}(t)} |x| \EE_z[\nu(x)]
%=\sum_{x\in \mathcal{X}\setminus \mathcal{X}(t)} |x|\mypp\frac{z^x}{1-z^x}\\
%\notag
%&=\sum_{x\in \mathcal{X}\setminus \mathcal{X}(t)} |x|\sum_{k=1}^\infty z^{kx}
\sum_{k=1}^\infty\sum_{x\in \mathcal{X}\setminus \mathcal{X}(t)}
|x|\mypp \rme^{-\alpha_n k\langle \tilde x,\myp\delta(\tau(\tilde x))\rangle}\\
%\notag &\le \sum_{k=1}^\infty\sum_{x\notin \mathcal{X}(t)}
%(x_1+x_2)\mypp \rme^{-\alpha_n k\delta_{*}(x_1+\rho_n\myp x_2)/2}\\
\label{eq:Ez} &\le
\sum_{k=1}^\infty\sum_{x_1=1}^\infty\sum_{x_2>tx_1} (x_1+x_2)\mypp
\rme^{-\alpha_n k\delta_{*}(x_1+x_2)/2}.
\end{align}
Note that the number of integer pairs $(x_1,x_2)$ (with $x_1\ge1$,
$x_2\ge0$) satisfying the conditions $x_1+x_2=y$ and $x_2>tx_1$ does
not exceed $y/(t+1)$. Hence, again using estimate (\ref{eq:hkm}), we
see that the right-hand side of (\ref{eq:Ez}) is bounded from above
by
\begin{align*}
\sum_{k=1}^\infty\sum_{y=1}^\infty \frac{y^2}{t+1}\,\rme^{-\alpha_n
k\delta_{*\myn} y/2} &=\frac{1}{t+1}\sum_{k=1}^\infty
\frac{O(1)}{(\alpha_n k)^{3}}
=\frac{O(1)}{\alpha_n^3\myp(t+1)}\mypp.
\end{align*}
Recalling that $\alpha_n^3=1/(\rho_n\myp n_1)\sim n_1^{-1}$, this
implies estimate (\ref{e}) for all $t$ large enough.
%, and the proof is complete.
\end{proof}

\begin{theorem}\label{th:5.3}
%Under Assumption \textup{\ref{as:z}}\textup{,}
Uniformly in\/ $t\in[0,\infty]$ we have
\begin{align}\label{lim_uv}
\lim_{n\to\infty}n_1^{-1}\EE_{z}^{\gamma}[\xi_1(t)]=
u_\gamma(t),\qquad
\lim_{n\to\infty}n_1^{-1}\EE_{z}^{\gamma}[\xi_2(t)]=
g_\gamma(u_\gamma(t)).
\end{align}
In particular\textup{,} for\/ $t=\infty$ this yields
\begin{equation}\label{lim_uv_infty}
\lim_{n\to\infty}n_1^{-1}\EE_{z}^{\gamma}(\xi_1)=1,\qquad
\lim_{n\to\infty}n_1^{-1}\EE_{z}^{\gamma}(\xi_2)=c_\gamma.
\end{equation}
\end{theorem}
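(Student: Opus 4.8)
The plan is to imitate the proof of Theorem \ref{rs}, replacing the weight $|x|$ in the length functional by the coordinate weights $x_1$ and $x_2$, and to identify the resulting limits with the parametric equations \eqref{u} for $u_\gamma(t)$ and $v_\gamma(t)=g_\gamma(u_\gamma(t))$. Concretely, starting from \eqref{eq:u,v_Gamma} and \eqref{eq:nu}, write
\begin{equation*}
\EE_z^\gamma[\xi_j(t)]=\sum_{x\in\calX(t)}x_j\sum_{k=1}^\infty z^{kx}
=\sum_{k=1}^\infty\sum_{x\in\calX(t)}x_j\,\rme^{-\alpha_n k\langle\tilde x,\,\delta(\tau(\tilde x))\rangle}
\end{equation*}
and apply Lemma \ref{lm:Mobius} exactly as in the proof of Theorem \ref{th:1}, now with $f(x):=x_j\,\rme^{-\alpha_n\langle\tilde x,\,\delta(\tau(\tilde x))\rangle}\mathbf 1_{\calX(t)}(x)$. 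The same crude bound used for \eqref{eq:hkm} (since $x_j\le x_1+x_2$) gives $F(hk)=O(1)\,\alpha_n^{-3}(hk)^{-2}$, so condition \eqref{eq:sum_sum<} holds and the M\"obius-inverted double sum over $k,m$ has a summable majorant $O(1)\,k^{-3}m^{-2}$; dominated convergence then lets me pass to the limit termwise.

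Next I would recognize the inner $x_1,x_2$-sum as a Riemann sum, converging (by direct Riemann integrability, as in Theorem \ref{th:1}) to $\iint_{0\le x_2\le tx_1}x_j\,\rme^{-km(x_1\delta_1(x_2/x_1)+x_2\delta_2(x_2/x_1))}\,\dif x_1\dif x_2$. The substitution $y_1=u$, $y_2=us$ reduces this to $\int_0^t r_j(s)\bigl(\int_0^\infty u^2\rme^{-kmu(\delta_1(s)+s\delta_2(s))}\dif u\bigr)\dif s$ with $r_1(s)=1$, $r_2(s)=s$, i.e.\ to $\dfrac{2}{(km)^3}\int_0^t\dfrac{r_j(s)}{(\delta_1(s)+s\delta_2(s))^3}\,\dif s$. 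Summing over $k,m$ and using $\sum_k k^{-3}\sum_m\mu(m)m^{-2}=\zeta(3)/\zeta(2)$, and recalling $\kappa^3=2\zeta(3)/\zeta(2)$ together with \eqref{eq:delta-eqn}, I get
\begin{equation*}
\lim_{n\to\infty}n_1^{-1}\EE_z^\gamma[\xi_1(t)]=\int_0^t\frac{\dif s}{\bigl(g''_\gamma(u_\gamma(s))^{1/3}\bigr)^3/(1+s^2)^{?}}\,,
\end{equation*}
and more to the point, comparing with the $t$-derivatives \eqref{du}, \eqref{dv} of \eqref{u}, the integrands are exactly $\dif u_\gamma/\dif s$ and $\dif v_\gamma/\dif s$: indeed $\kappa^3(\delta_1(s)+s\delta_2(s))^{-3}=\ell'_\gamma(s)\sqrt{1+s^2}$ by \eqref{fu1}, so that $\kappa^3\cdot 1\cdot(\delta_1+s\delta_2)^{-3}=(1+s^2)^{1/2}\ell'_\gamma(s)=$ (by \eqref{du}, up to the factor already present) $\dfrac{\dif u_\gamma}{\dif s}\cdot(1+s^2)$—a short bookkeeping check shows the powers match and the $j=1$ limit is $u_\gamma(t)$, the $j=2$ limit is $v_\gamma(t)=g_\gamma(u_\gamma(t))$. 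Setting $t=\infty$ gives \eqref{lim_uv_infty}, since $u_\gamma(\infty)=1$ and $g_\gamma(1)=c_\gamma$.

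Finally, uniformity in $t$ is handled exactly as in Theorem \ref{rs}: each $f_n(t):=n_1^{-1}\EE_z^\gamma[\xi_j(t)]$ is non-decreasing in $t$, the limit $u_\gamma(t)$ (resp.\ $g_\gamma(u_\gamma(t))$) is continuous on $[0,\infty]$, so Lemma \ref{lm:8.1} upgrades pointwise to uniform convergence on every finite $[0,t^*]$; and the tail estimate, bounding $\EE_z^\gamma[\xi_j(\infty)-\xi_j(t)]\le\EE_z^\gamma[\ell_\varGamma(\infty)-\ell_\varGamma(t)]$, is already provided by \eqref{e} in the proof of Theorem \ref{rs}. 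The main obstacle is purely the identification step—correctly tracking the algebraic factors of $\sqrt{1+s^2}$ through the change of variables and matching them against the by-parts formulas \eqref{u}—rather than any analytic subtlety, since the dominated-convergence and Riemann-sum arguments are verbatim repetitions of Theorem \ref{th:1}.
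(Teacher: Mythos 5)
Your argument follows the paper's proof almost verbatim: M\"obius inversion via Lemma \ref{lm:Mobius} with $f(x)=x_j\,\rme^{-\alpha_n\langle\tilde x,\myp\delta(\tau(\tilde x))\rangle}\mathbf 1_{\calX(t)}(x)$, the domination bound as in \eqref{eq:hkm}, termwise passage to the limit, and the Riemann-sum/substitution step producing $\frac{2}{(km)^3}\int_0^t r_j(s)\myp\bigl(\delta_1(s)+s\delta_2(s)\bigr)^{-3}\dif s$ with $r_1\equiv1$, $r_2(s)=s$. Your treatment of uniformity (Lemma \ref{lm:8.1} on finite intervals plus the tail bound $x_j\le|x|$, reusing \eqref{e}) is exactly what the paper means by ``similarly as in Theorem \ref{rs}''.

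The one weak point is the final identification, which you leave unfinished (the exponent ``?'') and which, as written, contains an error: differentiating \eqref{fu1} gives $\kappa^3\bigl(\delta_1(s)+s\delta_2(s)\bigr)^{-3}=\ell'_\gamma(s)/\sqrt{1+s^2}$, not $\ell'_\gamma(s)\sqrt{1+s^2}$; with your version the $j=1$ integrand would come out as $(1+s^2)\,\dif u_\gamma/\dif s$ instead of $\dif u_\gamma/\dif s$. Note that no factor $\sqrt{1+s^2}$ appears in the $\xi_j$ integrands at all --- that factor is specific to the length functional (weight $|x|$). The clean route, which is the paper's, bypasses \eqref{fu1} entirely: by \eqref{eq:delta-eqn}, $\kappa^3\bigl(\delta_1(s)+s\delta_2(s)\bigr)^{-3}=1/g''_\gamma(u_\gamma(s))$, so the limits are $\int_{t_0}^t \dif s/g''_\gamma(u_\gamma(s))$ and $\int_{t_0}^t s\,\dif s/g''_\gamma(u_\gamma(s))$, and the substitution $s=g'_\gamma(u)$, $\dif s=g''_\gamma(u)\,\dif u$, turns these into $\int_0^{u_\gamma(t)}\dif u=u_\gamma(t)$ and $\int_0^{u_\gamma(t)}g'_\gamma(u)\,\dif u=g_\gamma(u_\gamma(t))$. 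Your route via \eqref{du}, \eqref{dv} does also work once the misquoted identity is corrected, since then the integrands are exactly $\dif u_\gamma/\dif s$ and $\dif v_\gamma/\dif s$; but as submitted the bookkeeping does not close.
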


\begin{proof}
Similarly to representation (\ref{per-mu}), one can show that
\begin{equation}\label{eq:Riemann1}
n_1^{-1}\EE_{z}^{\gamma}[\xi_1(t)]=\rho_n\sum_{k,\myp m=1}^\infty
m\mu(m) \sum_{x_1=1}^\infty\sum_{0\le x_2\le tx_1} \alpha_n^3
x_1\myp e^{-km\alpha_n \langle\tilde x,\myp\delta(\tau(\tilde x))
\rangle}.
\end{equation}
Assuming that $t_0\le t\le t_1$ and passing to the limit similarly
as in the proof of Theorem \ref{th:1}, we obtain, using
(\ref{eq:delta-eqn}) and making the substitution $x_2=sx_1$, that
\begin{align}
\notag
\lim_{n\to\infty} n_1^{-1}\EE_{z}^{\gamma}[\xi_1(t)]&=
\sum_{k,\myp m=1}^\infty m\mu(m) \iint_{0\le x_2\le tx_1}\!x_1\,
\rme^{-km \langle x,\myp\delta(\tau(x))}\,\dif{x}_1\dif{x}_2\\
\notag &=\sum_{k,\myp m=1}^\infty m\mu(m)
\frac{2}{(km)^3}\int_{t_0}^t\frac{\dif s}
{\bigl(\delta_1(s)+s\delta_2(s)\bigr)^3}\\
\notag
&=2\sum_{k=1}^\infty\frac{1}{k^3}\sum_{m=1}^\infty
\frac{\mu(m)}{m^2} \int_{t_0}^{t}\frac{\dif s}
{\kappa^3\myp g_\gamma''(u_\gamma(s))}\\
&=\frac{2\myp\zeta(3)}{\zeta(2)\mypp\kappa^3}
\int_{0}^{u_\gamma(t)}\frac{\dif
g_\gamma'(u)}{g_\gamma''(u)}=u_\gamma(t). \label{eq:Riemann2}
\end{align}
Similarly,
\begin{align*}
\lim_{n\to\infty} n_2^{-1}\EE_{z}^{\gamma}[\xi_2(t)] &=\sum_{k,\myp
m=1}^\infty m\mu(m) \iint_{0\le x_2\le tx_1}\!x_2\,
\rme^{-km \langle x,\myp\delta(\tau(x))} \,\dif{x}_1\dif{x}_2\\
&=\sum_{k,\myp m=1}^\infty m\mu(m)
\frac{2}{(km)^3}\int_{t_0}^{t}\frac{s\,\dif s}
{\bigl(\delta_1(s)+s\delta_2(s)\bigr)^3}\\
&=\frac{2\zeta(3)}{\zeta(2)}
\int_{t_0}^{u_\gamma(t)}\frac{s}
{\kappa^3\myp g_\gamma''(u_\gamma(s))}\,\dif s\\
&=\int_{0}^{u_\gamma(t)}\frac{g'_\gamma(u)\,\dif g_\gamma'(u)}
{g_\gamma''(u)}=g_\gamma(u_\gamma(t)).
\end{align*}
Finally, the uniform convergence in (\ref{lim_uv}) can be proved
similarly as in Theorem \ref{rs}.
\end{proof}

For the future applications, we need to estimate the rate of
convergence in (\ref{lim_uv_infty}) with sufficient accuracy. To
this end, we require some more smoothness of the function
$g_\gamma$.
\begin{assumption}\label{as3}
In addition to Assumptions \ref{as2} and \ref{as:z}, we now suppose
that $g_\gamma\in C^3[0,1]$.
\end{assumption}

\begin{theorem}\label{th:5.4}
Under Assumption \textup{\ref{as3}}\textup{,}
%one has\/
\,$\EE_z^\gamma(\xi_j)-n_j=O(n_1^{2/3})$ as\/ $n\to\infty$
\,\textup{(}$j=1,2$\textup{)}.
\end{theorem}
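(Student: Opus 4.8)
The plan is to sharpen the Riemann-sum estimates already used in the proof of Theorem \ref{th:5.3}, but now tracking the error in the approximation of the integral by the lattice sum. Recall that from the analogue of (\ref{eq:Riemann1}) with $t=\infty$ we have
\begin{equation*}
n_1^{-1}\EE_z^\gamma(\xi_j)=\rho_n\sum_{k,\myp m=1}^\infty m\mu(m)
\sum_{x_1=1}^\infty\sum_{x_2\ge0} \alpha_n^3\, x_j\,
\rme^{-km\alpha_n\langle\tilde x,\myp\delta(\tau(\tilde x))\rangle},
\end{equation*}
while the limit $n_j/n_1\to u_\gamma(\infty)$ or $g_\gamma(u_\gamma(\infty))$ is the corresponding double integral. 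First I would fix $k$ and $m$ and estimate the difference between the inner double sum $\alpha_n^3\sum_{x_1,x_2} x_j\,\rme^{-km\alpha_n\langle\tilde x,\delta(\tau(\tilde x))\rangle}$ and the integral $\iint x_j\,\rme^{-km\langle x,\delta(\tau(x))\rangle}\,\dif x_1\,\dif x_2$. Writing $\phi_j(x):=x_j\,\rme^{-\langle x,\delta(\tau(x))\rangle}$ (a homogeneous-type function, smooth away from the origin and with rapid decay because of (\ref{ogd})), the sum is $(\alpha_n km)^{3}\sum_{x} \psi_j(\alpha_n km\,\tilde x)$ with $\psi_j(y)=y_j\,\rme^{-\langle y,\delta(\tau(y))\rangle}$, so it is a Riemann sum for $\iint\psi_j$ with mesh $h:=\alpha_n km$ (slightly distorted in the second coordinate by $\rho_n$). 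For a function in $C^1$ with integrable derivative, the midpoint/corner Riemann sum error is $O(h)$; here the extra smoothness $g_\gamma\in C^3[0,1]$ (Assumption \ref{as3}) is exactly what guarantees that $\delta_j\in C^1$ and hence $\psi_j\in C^1$ away from the origin with controlled behaviour, so the per-term error is $O(h)=O(\alpha_n km)$ — but one must also separately handle the neighbourhood of the origin and the $\rho_n$-distortion, both of which contribute lower-order terms. Additionally there is the contribution of the missing strip $x_1=0$ in the lattice sum versus the integral, which is $O(\alpha_n^2)$ uniformly.

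Assembling: each term in the $(k,m)$-sum contributes its limiting integral plus an error $O(\alpha_n km)\cdot(\text{rapidly decaying factor in } km)$; more precisely, invoking the bound (\ref{eq:hkm})-type estimate $F(km)=O((\alpha_n km)^{-2}\alpha_n^{-3})$ and a companion first-order estimate, the error term should be $O(1)\,k^{-2}m^{-1}\cdot\alpha_n$ which, summed against $m\mu(m)$ over $k,m\ge1$, converges. Hence
\begin{equation*}
n_1^{-1}\EE_z^\gamma(\xi_j)=\bigl(\text{integral}\bigr)+O(\alpha_n)
= \tfrac{n_j}{n_1}+o(1)+O(\alpha_n),
\end{equation*}
and multiplying through by $n_1$, together with $\alpha_n=(\rho_n n_1)^{-1/3}\asymp n_1^{-1/3}$ so that $n_1\alpha_n\asymp n_1^{2/3}$, and $\rho_n-1=O(n_1^{-1})$ (since $c_n\to c_\gamma$ with $c_n,c_\gamma$ rationals/reals differing by $O(1/n_1)$ in the natural regime), yields $\EE_z^\gamma(\xi_j)-n_j=O(n_1^{2/3})$. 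One should double-check that the calibration identities (\ref{eq:delta-eqn}) make the leading integral equal to exactly $n_j/n_1$ up to the $o(1)$ already absorbed, so that no spurious $O(n_1)$ term survives — this is where Theorem \ref{th:5.3} is used directly.

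The main obstacle I expect is the uniform control of the Riemann-sum error \emph{near the corners of the domain}, i.e.\ near $x_1=0$ (where $\tau(\tilde x)\to\infty$, $\delta_2$ blows up per (\ref{eq:delta+infty})) and along the ray $x_2/x_1\to t_0$ or $t_1$: on those boundary rays $\delta_j\equiv+\infty$ just outside, so $\psi_j$ has a jump in its gradient there, and the naive $O(h)$ Riemann bound must be replaced by a more careful one-dimensional boundary-layer estimate. The cleanest way around this is to note that the integrand is still Lipschitz (indeed the exponential is continuous, equal to $0$ on the bad rays), so the error contribution from an $O(h)$-neighbourhood of each ray is $O(h)\times O(1)$, hence still $O(\alpha_n)$ after summation — but making this rigorous uniformly in $k,m$, while retaining the decay in $km$ needed for summability, is the delicate point and likely occupies the bulk of the actual proof.
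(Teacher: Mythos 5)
Your strategy---compare the lattice sum with the integral term-by-term in $(k,m)$ and add up the discrepancies---breaks down at precisely the step you pass over quickly. A first-order Riemann/Euler--Maclaurin bound for the inner sum gives a discrepancy of order $\alpha_n^3\,h^{-2}=\alpha_n(km)^{-2}$ per pair $(k,m)$, where $h=\alpha_n km$ (one power of $h$ better than the main term, which is of order $(km)^{-3}$). After multiplying by the weight $m\myp\mu(m)$ this is of order $\alpha_n k^{-2}m^{-1}$, and your assertion that this ``summed against $m\mu(m)$ converges'' is false: $|\mu(m)|=1$ on all square-free $m$, so $\sum_m m^{-1}|\mu(m)|$ diverges; taking into account the effective cutoff $m\lesssim(\alpha_n k)^{-1}$ coming from the exponential decay, the absolute-value estimate only yields $O(\alpha_n\log(1/\alpha_n))$ for the normalized error, i.e.\ $O(n_1^{2/3}\log n_1)$ after multiplying by $n_1$ --- weaker than the theorem. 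To remove the logarithm one must exploit the sign cancellation in the M\"obius sums, and for that a one-sided bound on the sum-minus-integral discrepancy is not enough: one needs an asymptotic expansion of it. That is exactly what the paper's proof supplies: Euler--Maclaurin applied to the $x_2$-sum in (\ref{eq:Ff}) gives (\ref{eq:F}); the Mellin transform and the two-dimensional M\"untz formula ((\ref{eq:M})--(\ref{eq:M(s)}), (\ref{eq:Muntz})) provide the analytic continuation of $M_1(s)$ into the strip $1<\Re s<2$; the $k,m$-sums are resummed into the factor $\zeta(s+1)/\zeta(s)$ in (\ref{eq:error1'}); and the contour is shifted to $\Re s=1$, which is legitimate because $\zeta(s)\ne0$ there (in fact $1/\zeta(s)$ vanishes at $s=1$ --- this is the M\"obius cancellation in analytic form), giving $O(\alpha_n^{-2})=O(n_1^{2/3})$ with no logarithmic loss. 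Your proposal contains no mechanism that captures this cancellation, so as written it does not prove the stated rate.

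There is also a secondary gap in how you treat the leading term. You write $n_1^{-1}\EE_z^\gamma(\xi_j)=n_j/n_1+o(1)+O(\alpha_n)$ and appeal to Theorem \ref{th:5.3} to identify the integral; but Theorem \ref{th:5.3} gives only a limit, i.e.\ a spurious $o(n_1)$ after rescaling, which destroys any $O(n_1^{2/3})$ conclusion. What is needed is the exact evaluation of the integral term, as in (\ref{eq:n1}): $\iint_{\RR_+^2} f_1(hx_1,hx_2)\,\dif x_1\dif x_2=2/(\rho_n h^2\kappa^3)$, whence the $k,m$-sum of the integral contributions equals $n_1$ identically (the $\rho_n$'s cancel exactly). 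In particular you neither need nor are entitled to the rate $\rho_n-1=O(n_1^{-1})$: the paper only assumes $c_n\to c_\gamma$. With that exact identity the whole quantity $\EE_z^\gamma(\xi_j)-n_j$ is the discrepancy sum, and the problem reduces to the cancellation issue described above.
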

\begin{proof}
Consider $\xi_1$ (the case $\xi_2$ is handled similarly). From
(\ref{eq:Riemann1}) with $t=\infty$ we have
\begin{equation*}
\EE_{z}^{\gamma}(\xi_1)
%=\sum_{k,\myp m=1}^\infty m\mu(m)
%\sum_{y_1=1}^\infty\sum_{y_2=0}^\infty y_1\mypp \rme^{-km\alpha_n
%\langle\tilde y,\myp\delta(\tau(\tilde y)) \rangle}
=\sum_{k,\myp
m=1}^\infty \frac{\mu(m)}{k\alpha_n}\,F_1(km\alpha_n),
\end{equation*}
where
\begin{equation}\label{eq:Ff}
F_1(h):=\sum_{x_1=1}^\infty\sum_{x_2=0}^\infty f_1(hx_1,hx_2),\qquad
f_1(x_1,x_2):=x_1\myp \rme^{-\langle\tilde x,\myp\delta(\tau(\tilde
x))\rangle}.
\end{equation}
Repeating the calculations as in (\ref{eq:Riemann2}), we note that
$$
\iint_{\RR_+^2}
f_1(hx_1,hx_2)\,\dif{x}_1\dif{x}_2=\frac{2}{\rho_n\myp h^2\kappa^3},
$$
so that
\begin{equation}\label{eq:n1}
\begin{aligned}
\sum_{k,\myp m=1}^\infty \frac{\mu(m)}{\alpha_n k}
&\left(\left.\iint_{\RR_+^2}
f_1(hx_1,hx_2)\,\dif{x}_1\dif{x}_2\right)\right|_{h=\alpha_n km}
%\\
%&\qquad\qquad
=\frac{2}{\rho_n\myp\alpha_n^3\kappa^3} \sum_{k,\myp m=1}^\infty
\frac{\mu(m)}{k^3m^2} =\frac{1}{\rho_n\myp\alpha_n^3}=n_1.
\end{aligned}
\end{equation}
Hence, we obtain the representation
\begin{equation}\label{eq:error1}
\EE_{z}^{\gamma}[\xi_1]-n_1= \sum_{k,\myp m=1}^\infty
\frac{\mu(m)}{\alpha_n k} \,\Delta_1(\alpha_n km),
\end{equation}
where
$$
 \Delta_1(h):=F_1(h)-
 \iint_{\RR_+^2} f_1(hx_1,hx_2)\,\dif{x}_1\dif{x}_2.
$$

Using that $\delta_i(t)\ge \delta_{*\myn}>0$ and $\rho_n\le 1/2$, we
have
\begin{align*}
  F_1(h)&\le \sum_{x_1=1}^\infty\sum_{x_2=0}^\infty hx_1
  \mypp \rme^{-h(x_1+x_2)\myp\delta_{*\myn}/2}
  =\frac{h\mypp \rme^{-h\delta_{*\myn}/2}}{(1-\rme^{-h\delta_{*\myn}/2})^3}\mypp.
\end{align*}
Hence, $F_1(h)=O(h^{-2})$ as $h\to0$ and $F_1(h)=O(h^{-\beta})$ for
any $\beta>0$ as $h\to+\infty$. Therefore, the function $F_1(h)$ is
well defined for all $h>0$ and its Mellin transform
\begin{equation}\label{eq:M}
M_1(s):=\int_0^\infty h^{s-1}F_1(h)\,\dif h
\end{equation}
(see, e.g., \cite[Ch.\,VI, \S\mypp{}9]{Widder})
%(see Titchmarsh 1986, \S~1.5)
is a regular function for $\Re s>2$. From a
two-dimensional version of the M\"untz formula (see
\cite[Lemma~5.1]{BZ4}),
%Lemma~5.1,
it follows that
%the function
$M_1(s)$ is meromorphic in the
half-plane $\Re s>1$ and has the single $($\myn{}simple\myp$)$ pole
at point $s=2$. Moreover, for all $1<\Re s<2$
%the following formula
%is valid
\begin{equation}\label{eq:Muntz}
M_1(s)=\int_0^\infty h^{s-1}\Delta_1(h)\,\dif h.
\end{equation}
The inversion formula for the Mellin transform
%\cite[Ch.~VI, \S\myp9, Theorem~9a, pp.\ 246--247]{Widder}
\cite[Theorem~9a, pp.\ 246--247]{Widder}
% 1946, Ch.~VI, \S~9, Theorem~9a)
%p.246--247:
% Theorem 9a. If the integral
%$$
%f(s)=\int_0^\infty t^{s-1}\psi(t)\,dt
%$$
%converges absolutely on the line $\Re s=c$, and if $\psi(t)$ is of bounded
%variation in a neighborhood of $t=x$ ($x>0$), then
%$$
%\lim_{T\to\infty}\frac{1}{2\pi i}
%\int\limits_{c-i\infty}^{c+i\infty}
%f(s) x^{-s}\,ds=}frac{\psi(x+}+\psi(x-)}{2}.
%$$
%
yields
\begin{equation}\label{eq:inversion}
\Delta_1(h)=\frac{1}{2\pi \rmi}\int_{c-\rmi\infty}^{c+\rmi\infty}
h^{-s} M_1(s)\,\dif s,\qquad 1<c<2.
\end{equation}
%In particular, setting $h=km\alpha_n$ we obtain the representation
%\begin{equation}\label{fo}
%\Delta_1(km\alpha_n)
%=\frac{1}{2\pi i}\int_{c-i\infty}^{c+i\infty}\frac{M(s)}{(km\alpha_n)^{s}}\,ds
%  \qquad(1<c<2).
%\end{equation}

In order to make use of formula (\ref{eq:inversion}), we need to
find explicitly the analytic continuation of function (\ref{eq:M})
to the strip $1<\Re s<2$.
%Along with the function
%$F(h)$ given by (\ref{eq:Ff}), let us consider the functions
%\begin{equation}\label{eq:F12}
%F_1(h):=\sum_{y_1=1}^\infty\sum_{y_2=0}^\infty
%\frac{2\myp y_1\myp e^{-h\langle\tilde y,\myp\delta(\tau(\tilde
%y))\rangle}}{\langle\tilde y,\myp\delta(\tau(\tilde y))\rangle}\mypp,
%\qquad
%F_2(h):=\sum_{y_1=1}^\infty\sum_{y_2=0}^\infty
%\frac{e^{-h\langle\tilde y,\myp\delta(\tau(\tilde y))\rangle}}
%{\langle\tilde y,\myp\delta(\tau(\tilde y))\rangle}\mypp.
%\end{equation}
Let us use the Euler--Maclaurin summation formula
%(see, e.g., \cite[p.~264]{BR})
%Bhattacharya and Ranga Rao 1976, p.~264)
\begin{equation}\label{EM1}
\sum_{x=0}^\infty f(x)=\int_0^\infty\! f(x)\,\dif x +
\frac{1}{2}\,f(0)+ \int_0^\infty\! B_1(x)\,f'(x)\,\dif x,
\end{equation}
where $B_1(x):=x-[x]-1/2$ and $[x]$ is the integer part of $x$.
%and $f\in C^1$ and $f(u)\to0$ as $u\to\infty$.
In view of Assumption \ref{as3} and equations (\ref{eq:curvature1}),
(\ref{eq:delta12}), we can apply this formula to the sum over $x_2$
in (\ref{eq:Ff}). Using the substitution $x_2=tx_1/\rho_n$, we
obtain
\begin{align}
\notag F_1(h)&=\sum_{x_1=1}^\infty h x_1 \int_0^\infty \rme^{-h
\langle\tilde x,\myp\delta(\tau(\tilde x)) \rangle}\,\dif x_2 +
\frac{1}{2}\sum_{x_1=1}^\infty hx_1\myp
\rme^{-hx_1\delta_1(0)}+ O(1)\,\frac{\rme^{-\const\cdot h}}{h}\\
&=\frac{h}{\rho_n}\sum_{x_1=1}^\infty x_1^2\int_0^\infty
\rme^{-hx_1\psi(t)}\,\dif t+O(1)\,\frac{\rme^{-\const\cdot
h}}{h}\myp, \label{eq:F}
\end{align}
where (cf.\ (\ref{eq:delta-eqn}))
\begin{equation}\label{eq:psi}
\psi(t):=\delta_1(t)+t\delta_2(t)\equiv \kappa\myp
g''_\gamma(u_\gamma(t))^{1/3}.
\end{equation}

Keeping track of only the main term in (\ref{eq:F}) and writing dots
for functions that are regular for $\Re s>1$, the Mellin transform
of $F_1(h)$ can be represented as follows
\begin{align}
\notag
M_1(s)&=\frac{1}{\rho_n}\int_0^\infty h^{s}
\left(\sum_{x_1=1}^\infty x_1^2\int_0^\infty
\rme^{-hx_1\psi(t)}\,\dif{t}\right)\dif{h}+\cdots\\
\notag
&=\frac{1}{\rho_n}\sum_{x_1=1}^\infty x_1^2 \int_0^\infty
\left( \int_0^\infty h^{s}\,\rme^{-hx_1\psi(t)}\,\dif{h}
\right)\dif{t}+\cdots\\
\notag
&=\frac{1}{\rho_n}\sum_{x_1=1}^\infty \frac{1}{x_1^{s-1}}
\int_0^\infty \frac{\Gamma(s+1)}{\psi(t)^{s+1}}\,\dif{t}+\cdots\\
&=\frac{1}{\rho_n}\,\zeta(s-1)\,\Gamma(s+1)\,\Psi(s)+\cdots,
\label{eq:M(s)}
\end{align} where
$$
\Psi(s):=\int_0^\infty \frac{1}{\psi(t)^{s+1}}\,\dif{t}.
$$
Recalling (\ref{eq:curvature}), function (\ref{eq:psi}) may be
rewritten in the form
$$
  \psi(t)=\kappa\mypp \varkappa_\gamma(t)^{1/3}\,\sqrt{1+t^2},\qquad
  t_0\le t\le t_1,
$$
and Assumption \ref{as2} implies that the function $\Psi(s)$ is
regular if $\Re s>0$. Furthermore, it is well known that the gamma
function $\Gamma(s)$ is analytic for $\Re s>0$ \cite[\S\mypp{}4.41,
p.\,148]{Titch2}, whereas the zeta function $\zeta(s)$ has a single
pole at point $s=1$ \cite[\S\mypp{}4.43, p.\,152]{Titch2}. It
follows that the right-hand side of (\ref{eq:M(s)}) is regular in
the strip $1<\Re s<2$ and hence provides the required analytic
continuation of the function $M_1(s)$ originally defined by
(\ref{eq:M}).

Setting $h=\alpha_n km$ and returning to formulas (\ref{eq:error1})
and (\ref{eq:inversion}), we get for $1<c<2$
\begin{align}
\notag
\EE_{z}^{\gamma}(\xi_1)-n_1&= \sum_{k,\myp m=1}^\infty
\frac{\mu(m)}{\alpha_n k} \,\frac{1}{2\pi \rmi}
\int_{c-\rmi\infty}^{c+\rmi\infty}\frac{M_1(s)}{(km\alpha_n)^{s}}\,\dif{s}\\
&=\frac{1}{2\pi \rmi}\int_{c-\rmi\infty}^{c+\rmi\infty}
\frac{M_1(s)\mypp\zeta(s+1)}{\alpha_n^{s+1}\mypp\zeta(s)}\,\dif{s}.
\label{eq:error1'}
\end{align} Using that $\zeta(s)\ne0$ for $\Re
s\ge1$, we can transform the contour of integration $\Re s=c$ in
(\ref{eq:error1'}) to the union of a small semi-circle $s=1+r\myp
\rme^{\myp\rmi t}$ \,($-\pi/2\le t\le\pi/2$) and two vertical lines,
$s=1\pm \rmi t$ \,($t\ge r$). Furthermore, studying resolution
(\ref{eq:M(s)}), one can show that $M_1(1\pm \rmi
t)=O\myp(|t|^{-2})$ as $t\to\infty$. As a result, the right-hand
side of (\ref{eq:error1'}) is bounded by $O(\alpha_n^{-2})$. Thus,
the proof of the theorem for $\xi_1$ is complete.
%
%The case $j=2$ is considered along the same lines, by setting
%$$
%F_2(h):=\sum_{y_1=1}^\infty\sum_{y_2=0}^\infty f_2(hy_1,hy_2),\qquad
%f_2(x_1,x_2):=x_2\myp
%     \rme^{-\langle\tilde x,\myp\delta(\tau(\tilde x))\rangle}.
%$$
%A crucial point is then to check that
%$$
%\iint_{\RR_+^2} f_2(hx_1,hx_2)\,\dif{x}_1 \dif{y}_2=\frac{2\myp
%c_\gamma}{\rho_n^2 h^2\kappa^3}\myp,
%$$
%which implies the identity
%\begin{align}
%\notag
% \sum_{k,\myp m=1}^\infty \frac{\mu(m)}{\alpha_n k}
%\int_0^\infty\!\!&\left.\left(\int_0^\infty \! f_2(hy_1,hy_2)\,dy_1
%dy_2\right)\right|_{h=\alpha_n km}\\
%&\qquad=\frac{2\myp c_\gamma}{\rho_n^2\alpha_n^3\kappa^3}
%\sum_{k=1}^\infty\sum_{m=1}^\infty \frac{\mu(m)}{k^3m^2}
%=\frac{c_\gamma}{\rho_n^2\alpha_n^3}=\frac{c_n n_1}{c_n}=n_2.
%\label{eq:n2}
%\end{align}
%This calculation, along with (\ref{eq:n1}), explains the specific
%choice of the parameter $\alpha_n$ in (\ref{eq:alpha}).
\end{proof}

\section{Asymptotics of higher-order moments}\label{sec6}
Throughout this section, we suppose that Assumptions \ref{as2} and
\ref{as:z} hold.

\subsection{Second-order moments}\label{sec6.1} Let
$K_z:=\Cov(\xi,\xi)=$ be the covariance matrix (with respect to the
measure $\QQ_z^\gamma$) of the random vector $\xi=\sum_{x\in\calX}
x\myp\nu(x)$. Since the random variables $\nu(x)$ are mutually
independent,  we see using (\ref{eq:nu}) that the elements
$K_z(i,j)=\Cov(\xi_i,\xi_j)$ ($i,j\in\{1,2\}$) of $K_z$ are given by
\begin{equation}\label{D_zxi}
K_z(i,j)=\sum_{x\in \calX}x_i x_j \Var[\nu(x)]=\sum_{x\in \calX} x_i
x_j \sum_{k=1}^\infty k\mypp z^{kx}.
\end{equation}

%Suppose that the parameters\/ $z_1(x)$\textup{,} $z_2(x)$ $(x\in
%\mathcal{X})$ are chosen subject to formulas\/
%\textup{(\ref{z})}\textup{,} with\/ $\delta_1(t)$\textup{,}
%$\delta_2(t)$ defined in\/ \textup{(\ref{eq:delta+infty})}\textup{,}
%\textup{(\ref{eq:delta12})}.

\begin{theorem}\label{th:2}
As $n\to\infty$\textup{,}
\begin{equation}\label{eq:Sigma}
K_z=3\myp\kappa^{-1}n_1^{4/3}\,(1+o(1))\myp B,
\end{equation}
where the elements of the matrix $B:=(B_{ij})$ are given by
\begin{equation}\label{eq:B}
B_{11}=\int_0^1 \frac{\dif u}{g''_\gamma(u)^{1/3}}\myp,\quad
B_{12}=B_{21}=\int_0^1 \frac{g'_\gamma(u)\,\dif
u}{g''_\gamma(u)^{1/3}}\myp,\quad B_{22}=\int_0^1
\frac{g'_\gamma(u)^2\, \dif u}{g''_\gamma(u)^{1/3}}\myp.
\end{equation}
\end{theorem}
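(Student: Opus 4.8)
The plan is to follow the same pattern as in the proof of Theorem~\ref{th:5.4}, starting from the exact expression \eqref{D_zxi} for the covariance elements and extracting the leading-order term via a Möbius/Mellin analysis. First I would substitute $z^x=\rme^{-\alpha_n\langle\tilde x,\delta(\tau(\tilde x))\rangle}$ into \eqref{D_zxi} and, exactly as in \eqref{per-mu}--\eqref{eq:Riemann1}, apply Lemma~\ref{lm:Mobius} to rewrite the sum over $\calX$ as a Möbius-weighted double sum over $k$ and $m$ of the function
\[
F_{ij}(h):=\sum_{x_1=1}^\infty\sum_{x_2=0}^\infty (hx_1)^{\,[i=1]+[j=1]}(hx_2)^{\,[i=2]+[j=2]}\,h^{-1}\,\rme^{-h\langle\tilde x,\delta(\tau(\tilde x))\rangle},
\]
evaluated at $h=\alpha_n km$ (I would of course write this out cleanly for the three cases $(i,j)\in\{(1,1),(1,2),(2,2)\}$ rather than with Iverson brackets). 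The key scaling observation is that each $K_z(i,j)$ is a sum of two extra powers of $x$, so the relevant Riemann integral produces one extra factor of $h^{-1}$ relative to the $\EE_z^\gamma(\xi_1)$ computation; tracking powers of $\alpha_n$ gives the order $\alpha_n^{-4}=(\rho_n n_1)^{4/3}\sim n_1^{4/3}$ claimed in \eqref{eq:Sigma}.

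Next I would identify the constant. Passing to the limit termwise (the dominated-convergence justification being identical to that in Theorem~\ref{th:1}, with the general term bounded by $O(k^{-4}m^{-2})$ or so after using $\delta_j\ge\delta_*$), the inner double sum over $x_1,x_2$ converges to
\[
\iint_{\RR_+^2} x_i x_j\,\rme^{-km\langle x,\delta(\tau(x))\rangle}\,\dif x_1\dif x_2.
\]
Writing $x_2=sx_1$ and integrating out $x_1$ via $\int_0^\infty x_1^3\rme^{-km x_1\psi(s)}\dif x_1=6/((km)^4\psi(s)^4)$, with $\psi(s)=\delta_1(s)+s\delta_2(s)=\kappa\,g''_\gamma(u_\gamma(s))^{1/3}$ by \eqref{eq:psi}, the $k,m$ sums factor as $6\sum_k k^{-4}\sum_m m^{-2}\mu(m)=6\,\zeta(4)/\zeta(2)$. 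Then the change of variable $s=g'_\gamma(u)$, $\dif s=g''_\gamma(u)\dif u$, $u$ ranging over $[0,1]$, together with the monomial $x_i x_j$ becoming $g'_\gamma(u)^{\#\{l\le 2:\, i_l=2\}}$ times $\sqrt{1+s^2}$-free weights, converts the $s$-integral into $\kappa^{-4}B_{ij}$ with $B_{ij}$ as in \eqref{eq:B}. One must check the numerical identity $6\,\zeta(4)/\zeta(2)\cdot\kappa^{-4}=3\kappa^{-1}$; using $\kappa^3=2\zeta(3)/\zeta(2)$ this reads $6\zeta(4)/(\zeta(2)\kappa^4)=3/\kappa$, i.e. $2\zeta(4)\kappa^{-3}=\zeta(2)$, i.e. $2\zeta(4)=\zeta(2)\kappa^3=2\zeta(3)$ --- wait, that is false, so in fact the correct bookkeeping must give $\zeta(3)$ rather than $\zeta(4)$ in the denominator, which happens because the prefactor in \eqref{D_zxi} carries $\sum_k k z^{kx}$ (not $\sum_k z^{kx}$), shifting the $k$-sum from $\zeta(4)$ to $\sum_k k\cdot k^{-4}=\zeta(3)$; then $6\zeta(3)/(\zeta(2)\kappa^4)=3/\kappa$ is exactly $2\zeta(3)=\zeta(2)\kappa^3$, which holds by definition of $\kappa$. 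This is the kind of constant-chasing the final writeup must get exactly right.

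It remains to upgrade ``leading-order asymptotics of each entry'' to the matrix statement \eqref{eq:Sigma}, and here the only genuine content beyond bookkeeping is controlling the error terms uniformly: one applies the Mellin-transform machinery of Theorem~\ref{th:5.4} (Euler--Maclaurin on the $x_2$-sum, identification of the analytic continuation of $M_{ij}(s)$ with a pole at $s=3$ coming from $\zeta(s-1)\Gamma(s+1)\Psi(s)$ or similar, contour shift to $\Re s$ slightly below $3$) to each of the three functions $F_{ij}$, obtaining $K_z(i,j)=3\kappa^{-1}n_1^{4/3}B_{ij}+O(n_1)$ --- in fact the relative error $o(1)$ suffices, so one could even avoid the full Mellin argument and argue more softly. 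The main obstacle I anticipate is purely clerical rather than conceptual: keeping the three monomial weights, the $\rho_n$-rescaling $\tilde x=(x_1,\rho_n x_2)$, and the zeta-function constants all straight simultaneously, and in particular verifying that the positive-definiteness-relevant off-diagonal entry comes out symmetric and that the prefactor collapses to exactly $3\kappa^{-1}$. There is no new analytic difficulty compared to Section~\ref{sec5}; the proof is a higher-moment replay of Theorems~\ref{th:1} and~\ref{th:5.4}.
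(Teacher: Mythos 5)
Your proposal is correct and follows essentially the same route as the paper: Möbius inversion applied to $K_z(i,j)=\sum_{x\in\calX}x_ix_j\sum_k k\myp z^{kx}$, termwise passage (dominated convergence) from the Riemann sums to $\iint x_ix_j\,\rme^{-km\langle x,\delta(\tau(x))\rangle}\dif x$, the substitutions $x_2=sx_1$ and $s=g'_\gamma(u)$, and the constant $6\myp\zeta(3)/(\zeta(2)\myp\kappa^4)=3/\kappa$ via $\kappa^3=2\zeta(3)/\zeta(2)$ --- exactly the paper's computation, which also only needs the $(1+o(1))$ accuracy and does not invoke the Mellin machinery. Your self-correction from $\zeta(4)$ to $\zeta(3)$ (due to the weight $\sum_k k\myp z^{kx}$ in the variance) lands on the right bookkeeping, so the argument as finally stated is sound.
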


\begin{proof}
Let us consider $K_z(1,1)$ (the other elements of $K_z$ are analyzed
in a similar manner). Substituting (\ref{z}) into (\ref{D_zxi}), by
the M\"obius inversion formula (cf.\ (\ref{eq:Riemann1})) we obtain
\begin{align}
\notag K_z(1,1)&=\sum_{k=1}^\infty\sum_{x\in \mathcal{X}} kx_1^2
\mypp\rme^{-k\alpha_n\langle \tilde x,\myp\delta(\tau(\tilde
x)\rangle}\\
&=\sum_{k,\myp m=1}^\infty km^2\mu(m)
\sum_{y_1=1}^\infty\sum_{y_2=0}^\infty y_1^2
\mypp\rme^{-km\alpha_n\langle\tilde y,\myp\delta(\tau(\tilde
y)\rangle}. \label{D_2}
\end{align}
Arguing as in the proof of Theorems \ref{th:1} and \ref{th:5.3}, we
obtain
\begin{align*}
 \lim_{n\to\infty} \alpha_n^4
 \sum_{x_1=1}^\infty\sum_{x_2=0}^\infty x_1^2
\mypp\rme^{-km\alpha_n\langle\tilde x,\myp\delta(\tau(\tilde
x)\rangle}&= \iint_{\RR_+^2}
x_2^2\mypp\rme^{-km\alpha_n\langle\tilde
x,\myp\delta(\tau(\tilde{x}))\rangle}\,\dif x_1\dif x_2\\
%&=\int_0^\infty u^3\,\dif u\int_0^\infty
%\rme^{-kmu(\delta_1(t)+t\delta_2(t))}\,\dif t\\
&=\frac{6}{(km)^4}
\int_{t_0}^{t_1}\frac{\dif{s}}{\bigl(\delta_1(s)+s\delta_2(s)\bigr)^4}\myp.
\end{align*}
Returning to (\ref{D_2}) and using (\ref{eq:delta+infty}),
(\ref{eq:delta-eqn}), we get
\begin{align*}
\lim_{n\to\infty}\alpha_n^4 K_z(1,1)
%&=6
%\sum_{k=1}^\infty\frac{1}{k^3}\sum_{m=1}^\infty\frac{\mu(m)}{m^2}
%\int_0^\infty\frac{1}{(\delta_1(t)+t\delta_2(t))^4}\,\dif t\\
&=\frac{6\myp\zeta(3)}{\zeta(2)}\int_{t_0}^{t_1}
\frac{\dif{s}}{\kappa^4\myp
g''_\gamma(u_\gamma(s))^{4/3}}=\frac{3}{\kappa}\int_0^1
\frac{\dif{u}}{g''_\gamma(u)^{1/3}}\myp,
\end{align*}
and the first formula in (\ref{eq:B}) follows, since
$\alpha_n=(\rho_n \myp n_1)^{-1/3}$ and $\rho_n\to1$ as
$n\to\infty$.
%
%Similar considerations in the case $j=2$ yield
%\begin{align*}
%lim_{n\to\infty}\alpha_n^4\sigma_2^2
%=\frac{6\myp\zeta(3)}{\zeta(2)}\int_{t_0}^{t_1}
%\frac{t^2}{\kappa^4\myp g''_\gamma(u_\gamma(t))^{4/3}}\,\dif
%=\frac{3}{\kappa}\int_0^1
%\frac{g'_\gamma(u)^2}{g''_\gamma(u)^{1/3}}\,\dif u.
%\end{align*}
%
%Finally, applying similar arguments to representation (\ref{eq:Cov})
%we get
%\begin{align*}
%\lim_{n\to\infty}\alpha_n^4\sigma_{12}
%&=\frac{6\myp\zeta(3)}{\zeta(2)}\int_{t_0}^{t_1}
%\frac{t}{\kappa^4\myp g''_\gamma(u_\gamma(t))^{4/3}}\,\dif
%t=\frac{3}{\kappa}\int_0^1
%\frac{g'_\gamma(u)}{g''_\gamma(u)^{1/3}}\,\dif u.
%\end{align*}
%
%The proof Theorem \ref{th:2} is complete.
\end{proof}

\begin{lemma}\label{lm:detK_z}
As\/ $n\to\infty$\textup{,}
\begin{equation}\label{eq:detK}
\det K_z\sim \left(\frac{3}{\kappa}\right)^2
\left(\int_0^1\frac{\dif{u}}{g''_\gamma(u)^{1/3}}
\int_0^1\frac{g'_\gamma(u)^2\,\dif u}{g''_\gamma(u)^{1/3}}-
\left(\int_0^1\frac{g'_\gamma(u)\,\dif{u}}{g''_\gamma(u)^{1/3}}\right)^2\right)
n_1^{8/3}.
\end{equation}
\end{lemma}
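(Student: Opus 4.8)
The plan is to deduce \eqref{eq:detK} directly from Theorem \ref{th:2}, which identifies the leading-order asymptotics of the entire covariance matrix $K_z$ as $3\kappa^{-1}n_1^{4/3}\,(1+o(1))\,B$. Since the determinant is a polynomial (indeed quadratic) in the matrix entries, and each entry $K_z(i,j)$ is asymptotically $3\kappa^{-1}n_1^{4/3}B_{ij}$ with the same scalar prefactor, we have
\begin{equation*}
\det K_z = \bigl(3\kappa^{-1}n_1^{4/3}\bigr)^2 \det\!\bigl((1+o(1))B\bigr)\sim \Bigl(\frac{3}{\kappa}\Bigr)^2 n_1^{8/3}\,\det B .
\end{equation*}
Writing out $\det B = B_{11}B_{22}-B_{12}^2$ and substituting the explicit integrals from \eqref{eq:B} yields exactly the right-hand side of \eqref{eq:detK}.

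The one point that requires a word of justification is that the $o(1)$ corrections to the individual entries do not spoil the determinant asymptotics: a priori the determinant of a near-singular matrix could be sensitive to small perturbations. Here, however, we only need that $\det B>0$ (equivalently, that $B$ is nondegenerate), for then $\det\!\bigl((1+o(1))B\bigr)\to\det B$ by continuity of the determinant, and the scalar factor $(3\kappa^{-1}n_1^{4/3})^2$ pulls out cleanly. So the key step is to verify $\det B>0$. This is precisely a Cauchy--Schwarz statement: writing $\dif\mu(u):=g''_\gamma(u)^{-1/3}\,\dif u$, which is a positive (finite, nonzero) measure on $[0,1]$ under Assumption \ref{as2}, we have $B_{11}=\int\dif\mu$, $B_{12}=\int g'_\gamma\,\dif\mu$, $B_{22}=\int (g'_\gamma)^2\,\dif\mu$, so that
\begin{equation*}
\det B = \int_0^1\dif\mu \int_0^1 (g'_\gamma)^2\,\dif\mu - \Bigl(\int_0^1 g'_\gamma\,\dif\mu\Bigr)^2 \ge 0,
\end{equation*}
with equality only if $g'_\gamma$ is $\mu$-a.e.\ constant. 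Since $g_\gamma$ is \emph{strictly} convex, $g'_\gamma$ is strictly increasing, hence non-constant, so the inequality is strict and $\det B>0$.

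The main (and essentially only) obstacle is thus this strict positivity of $\det B$; everything else is the elementary observation that a fixed scalar prefactor and a continuous function (the determinant) commute with the $\sim$ relation. I would therefore structure the proof as: (i) quote Theorem \ref{th:2} to get $K_z=3\kappa^{-1}n_1^{4/3}(1+o(1))B$ entrywise; (ii) note $\det$ is continuous and multiplicative under scalars, so $\det K_z\sim (3/\kappa)^2 n_1^{8/3}\det B$; (iii) expand $\det B=B_{11}B_{22}-B_{12}^2$ using \eqref{eq:B} to recognize the claimed expression, remarking in passing (via the Cauchy--Schwarz argument above, together with strict convexity of $g_\gamma$) that this quantity is strictly positive so that the asymptotic equivalence is genuinely nontrivial. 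This last remark is worth including because $\det K_z>0$ asymptotically is exactly what will be needed to invert $K_z$ in the forthcoming local central limit theorem (Theorem \ref{llt}).
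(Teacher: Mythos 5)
Your proposal is correct and follows essentially the same route as the paper, whose proof of this lemma is simply that it ``readily follows from Theorem \ref{th:2}'' by substituting the entrywise asymptotics into the determinant. Your Cauchy--Schwarz observation that $\det B>0$ (needed for the relation $\sim$ to be meaningful and for invertibility of $K_z$) is also made in the paper, in the remark immediately following the lemma rather than inside its proof.
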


\begin{proof}
The proof readily follows from Theorem \ref{th:2}.
\end{proof}

From Theorem \ref{th:2} and Lemma \ref{lm:detK_z}, it follows (e.g.,
using the Cauchy--Schwarz inequality) that the matrix $K_z$ is
(asymptotically) positive definite; in particular, $\det K_z>0$ and
hence $K_z$ is invertible. Let $V_z=K_z^{-1/2}$ be the (unique)
square root of
%the matrix
$K_z^{-1}$,
% (see, e.g., \cite[Ch.\,6,
%\S\mypp{}5, pp.\ 93--94]{Bellman}),
that is, a symmetric, positive definite matrix such that
$V_z^2=K_z^{-1}$.
%We also need to estimate the asymptotics of the covariance matrix
%norm.
Recall that the matrix norm induced by the Euclidean vector
norm $|\,{\cdot}\,|$ is defined by $\|A\|:=\sup_{|x|=1}|x A|$.
%It is
%well known that for a (real) square matrix $A$ its norm is given by
%\begin{equation}\label{eq:normA}
%\|A\|=\sqrt{\lambda(A^{\topp}\mynn A)}\myp,
%\end{equation}
%where $\lambda(\cdot)$ is the spectral radius of a matrix, defined
%as the largest modulus of its eigenvalues (see, e.g.,
%\cite[\S\mypp{}6.3, pp.\:210--211]{Lancaster}).
We need some general facts about this norm (see \cite[\S\myp7.2,
pp.\ 33--34]{BZ4} for simple proofs and bibliographic comments).

\begin{lemma}\label{lm:6.2}
If\/ $A$ is a real matrix then $\|A^{\topp}\mynn
A\|=\|A\|^2$.
\end{lemma}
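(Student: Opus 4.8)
The plan is to prove the matrix identity $\|A^{\topp}A\| = \|A\|^2$ directly from the definition of the induced operator norm. First I would recall that $\|A\|^2 = \sup_{|x|=1}|xA|^2 = \sup_{|x|=1}\langle xA, xA\rangle = \sup_{|x|=1}\langle x, xAA^{\topp}\rangle$, where in the last step I use the transpose to move one factor across the inner product (with the paper's convention that vectors are row-vectors acting on the right). This rewrites $\|A\|^2$ as the largest value of the quadratic form associated with the symmetric matrix $AA^{\topp}$ over the unit sphere.

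Next I would invoke the standard fact that for a symmetric matrix $S$, the operator norm $\|S\|$ equals $\sup_{|x|=1}|\langle x, xS\rangle|$, i.e.\ its spectral radius; combined with positive semi-definiteness of $AA^{\topp}$ (which makes the quadratic form nonnegative, so the absolute value is superfluous), this gives $\|A\|^2 = \|AA^{\topp}\|$. To get the stated version with $A^{\topp}A$ rather than $AA^{\topp}$, I would simply apply this to $A^{\topp}$ in place of $A$: $\|A^{\topp}\|^2 = \|A^{\topp}(A^{\topp})^{\topp}\| = \|A^{\topp}A\|$, and then use $\|A^{\topp}\| = \|A\|$, which itself follows from the symmetry of the singular value decomposition (or again from the quadratic-form characterization applied symmetrically). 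Alternatively, one can argue in one stroke: $\|A^{\topp}A\| \le \|A^{\topp}\|\,\|A\| = \|A\|^2$ by submultiplicativity, while $\|A\|^2 = \sup_{|x|=1}\langle xA^{\topp}A, x\rangle \le \sup_{|x|=1}|xA^{\topp}A| = \|A^{\topp}A\|$, giving equality.

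I expect no serious obstacle here; the only points requiring a little care are the bookkeeping of the row-vector/transpose conventions used in the paper and the justification that for a symmetric positive semi-definite matrix the induced operator norm coincides with the supremum of its quadratic form on the unit sphere (equivalently, its largest eigenvalue). Since the excerpt explicitly says ``see \cite[\S\myp7.2, pp.\ 33--34]{BZ4} for simple proofs,'' I would keep the argument short, citing that reference for the spectral fact and presenting only the two-line chain of (in)equalities above.

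\begin{proof}
By definition of the induced norm and the transpose identity $\langle xA,y\rangle=\langle x,yA^{\topp}\rangle$, for any real matrix $A$ we have
\begin{equation*}
\|A\|^2=\sup_{|x|=1}|xA|^2=\sup_{|x|=1}\langle xA,xA\rangle
=\sup_{|x|=1}\langle x,xAA^{\topp}\rangle\le \sup_{|x|=1}|x AA^{\topp}|=\|AA^{\topp}\|.
\end{equation*}
On the other hand, by submultiplicativity of the operator norm, $\|AA^{\topp}\|\le\|A\|\,\|A^{\topp}\|=\|A\|^2$, since $\|A^{\topp}\|=\|A\|$ (both equal the largest singular value of $A$). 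Hence $\|AA^{\topp}\|=\|A\|^2$. Applying this identity with $A^{\topp}$ in place of $A$ and using $(A^{\topp})^{\topp}=A$ and $\|A^{\topp}\|=\|A\|$ gives $\|A^{\topp}A\|=\|A^{\topp}\|^2=\|A\|^2$, as required.
\end{proof}
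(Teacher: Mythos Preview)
Your argument is correct. The paper does not actually prove this lemma in the text; it merely states it and refers the reader to \cite[\S\myp7.2, pp.\ 33--34]{BZ4} for ``simple proofs and bibliographic comments.'' Your short chain of inequalities (Cauchy--Schwarz for the lower bound, submultiplicativity plus $\|A^{\topp}\|=\|A\|$ for the upper bound, then the swap $A\leftrightarrow A^{\topp}$) is a standard and entirely adequate proof, consistent with the paper's row-vector conventions.
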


\begin{lemma}
%[see \mbox{\cite[Lemma~7.4]{BZ4}}]
%[\mbox{\normalfont cf.\ \cite[\S\mypp{}5.6, Problem\,23, hints (2,5) and (5,2),
%pp.\,313--314]{Horn}}]
\label{lm:6.3} If\/ $A=(a_{ij})$ is a\/ real $d\times d$
matrix\myp\textup{,} then\/
\begin{equation}\label{eq:m-norm}
\frac{1}{d}\sum_{i,\myp{}j=1}^d a_{ij}^2\le
\|A\|^2\le\sum_{i,\myp{}j=1}^d a_{ij}^2\myp.
\end{equation}
\end{lemma}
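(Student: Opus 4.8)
The plan is to prove the two inequalities in (\ref{eq:m-norm}) separately, both by elementary means, keeping in mind the paper's convention that $\|A\|=\sup_{|x|=1}|xA|$ acts on row-vectors on the left.

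For the upper bound, take an arbitrary row-vector $x=(x_1,\dots,x_d)$ with $|x|=1$ and expand
$$
|xA|^2=\sum_{j=1}^d\Bigl(\sum_{i=1}^d x_i\myp a_{ij}\Bigr)^{\!2}.
$$
Applying the Cauchy--Schwarz inequality to each inner sum and using $\sum_i x_i^2=1$, we get $\bigl(\sum_i x_i a_{ij}\bigr)^2\le\sum_i a_{ij}^2$, and summing over $j$ yields $|xA|^2\le\sum_{i,j}a_{ij}^2$. Taking the supremum over all unit vectors $x$ gives $\|A\|^2\le\sum_{i,j}a_{ij}^2$, which is the right-hand inequality in (\ref{eq:m-norm}).

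For the lower bound, I would simply test the definition of $\|A\|$ on the standard basis vectors. For each $i$, the row-vector $e_i$ (with $1$ in position $i$ and $0$ elsewhere) satisfies $|e_i|=1$, while $e_i A$ is the $i$-th row of $A$, so $|e_i A|^2=\sum_{j=1}^d a_{ij}^2$. Hence $\|A\|^2\ge\sum_{j=1}^d a_{ij}^2$ for every $i$, and in particular
$$
\|A\|^2\ge\max_{1\le i\le d}\sum_{j=1}^d a_{ij}^2\ge\frac1d\sum_{i=1}^d\sum_{j=1}^d a_{ij}^2,
$$
where the last step uses that the maximum of $d$ nonnegative numbers is at least their average. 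This is the left-hand inequality in (\ref{eq:m-norm}), and the proof is complete.

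There is no genuine obstacle here; the argument is entirely routine, and the only point requiring minor care is keeping the row/column conventions consistent with the definition of $\|\cdot\|$ (which is why rows of $A$, rather than columns, appear throughout). Alternatively, both bounds follow at once from the singular value decomposition, since $\|A\|^2$ equals the largest squared singular value whereas $\sum_{i,j}a_{ij}^2$ equals the sum of all $d$ squared singular values; but the elementary argument above avoids that machinery and matches the style of the cited reference.
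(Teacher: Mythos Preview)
Your proof is correct. The paper itself does not give a proof of this lemma but simply refers to \cite[\S\myp7.2]{BZ4} for ``simple proofs''; your argument---Cauchy--Schwarz for the upper bound and testing on the standard basis vectors for the lower bound---is exactly the kind of elementary proof one expects, and it respects the row-vector convention used in the paper.
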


\begin{lemma}\label{lm:6.5}
Let\/ $A$ be a symmetric\/ $2\times 2$ matrix with\/ $\det A\ne 0$.
Then
\begin{equation}\label{eq:d=2}
\|A^{-1}\|=\frac{\|A\|}{|\mynn\det A|}\myp.
\end{equation}
\end{lemma}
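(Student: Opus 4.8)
The statement to prove is Lemma~\ref{lm:6.5}: for a symmetric $2\times2$ matrix $A$ with $\det A\ne0$, we have $\|A^{-1}\| = \|A\|/|\det A|$.

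The plan is to exploit the fact that a symmetric real matrix is diagonalizable by an orthogonal transformation, and that the Euclidean-induced operator norm is orthogonally invariant. First I would recall that for a symmetric $2\times2$ matrix $A$ there is an orthogonal matrix $O$ such that $O^{\topp} A\mypp O = \mathrm{diag}(\lambda_1,\lambda_2)$, where $\lambda_1,\lambda_2$ are the (real) eigenvalues of $A$. Since $\det A\ne0$, both eigenvalues are nonzero, so $A$ is invertible and $A^{-1} = O\mypp\mathrm{diag}(\lambda_1^{-1},\lambda_2^{-1})\mypp O^{\topp}$. Because multiplication by an orthogonal matrix preserves Euclidean norms of vectors, it preserves the induced matrix norm: $\|O B O^{\topp}\| = \|B\|$ for any $B$. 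Hence $\|A\| = \max\{|\lambda_1|,|\lambda_2|\}$ and $\|A^{-1}\| = \max\{|\lambda_1|^{-1},|\lambda_2|^{-1}\} = 1/\min\{|\lambda_1|,|\lambda_2|\}$.

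The key step is then the elementary identity
\begin{equation*}
\frac{1}{\min\{|\lambda_1|,|\lambda_2|\}} = \frac{\max\{|\lambda_1|,|\lambda_2|\}}{|\lambda_1|\,|\lambda_2|}\myp,
\end{equation*}
which holds for any two nonzero reals $\lambda_1,\lambda_2$ simply because $\min\{|\lambda_1|,|\lambda_2|\}\cdot\max\{|\lambda_1|,|\lambda_2|\} = |\lambda_1|\,|\lambda_2|$. Combining this with $\|A\| = \max\{|\lambda_1|,|\lambda_2|\}$ and $|\det A| = |\lambda_1\lambda_2| = |\lambda_1|\,|\lambda_2|$ yields $\|A^{-1}\| = \|A\|/|\det A|$, as claimed.

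There is essentially no serious obstacle here; the only points requiring a sentence of justification are (i) the orthogonal invariance of the induced norm and (ii) the fact that the norm of a diagonal matrix equals the largest absolute value of its entries (both of which follow directly from the definition $\|A\| = \sup_{|x|=1}|xA|$ together with $|xO|=|x|$ for orthogonal $O$). One could alternatively give a fully computational proof by writing $A = \begin{pmatrix} a & b \\ b & d \end{pmatrix}$ and directly computing both operator norms via the eigenvalues of $A^{\topp}A = A^2$, but the spectral argument above is cleaner and makes the role of the $2\times2$ assumption transparent: it is exactly what lets us reduce $\|A^{-1}\|$ to a two-element $\min$, after which the reciprocal identity closes the proof.
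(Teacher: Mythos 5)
Your argument is correct and complete. Note that the paper does not actually prove this lemma in the text: it refers the reader to \cite[\S\myp7.2]{BZ4} for ``simple proofs,'' so there is no in-paper argument to compare with; your spectral proof (orthogonal diagonalization, orthogonal invariance of the induced norm, $\|A\|=\max_i|\lambda_i|$, $\|A^{-1}\|=1/\min_i|\lambda_i|$, and $\min\cdot\max=|\lambda_1\lambda_2|=|\det A|$) is a standard, self-contained way to establish it, and the two small justifications you flag (invariance under orthogonal conjugation and the norm of a diagonal matrix) are indeed immediate from the definition $\|A\|=\sup_{|x|=1}|xA|$. One remark: the identity (\ref{eq:d=2}) does not really need symmetry. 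For any invertible $2\times2$ matrix one has $A^{-1}=\operatorname{adj}(A)/\det A$, and the adjugate of a $2\times 2$ matrix has the same singular values as $A$ (its singular values are $|\det A|/\sigma_1=\sigma_2$ and $|\det A|/\sigma_2=\sigma_1$), hence the same induced norm, giving $\|A^{-1}\|=\|A\|/|\det A|$ in general; symmetry merely lets you replace singular values by absolute eigenvalues, which is the simplification your proof exploits. Either route suffices for the application in the paper, where the lemma is applied to the symmetric covariance matrix $K_z$.
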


We can now prove the following estimates for the norms of $K_z$ and
$V_z$.

\begin{lemma}\label{lm:K_z}
As\/ $n\to\infty$\textup{,} we have
\begin{equation}\label{eq:norms}
\|K_z\|\asymp n_1^{4/3},\qquad \|V_z\|\asymp n_1^{-2/3}.
\end{equation}
\end{lemma}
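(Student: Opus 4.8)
The plan is to deduce both estimates from the second-order moment asymptotics already established, namely Theorem~\ref{th:2} and Lemma~\ref{lm:detK_z}, together with the elementary facts about the operator norm recorded in Lemmas~\ref{lm:6.2}--\ref{lm:6.5}; no further analysis of the random field $\nu(\cdot)$ is required. The one point that genuinely uses the hypotheses on $\gamma$ is the non-degeneracy of the limiting matrix $B$ from~(\ref{eq:B}): its diagonal entries $B_{11},B_{22}$ are manifestly positive, while $\det B=B_{11}B_{22}-B_{12}^2>0$ by the Cauchy--Schwarz inequality applied to the functions $1$ and $g'_\gamma$ with respect to the measure $g''_\gamma(u)^{-1/3}\,\dif u$ on $[0,1]$, the inequality being strict because $g'_\gamma$ is non-constant by strict convexity (Assumption~\ref{as2}). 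Thus $B$ is positive definite, and in particular $\|B\|>0$.

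For the first relation in~(\ref{eq:norms}), I would simply invoke Theorem~\ref{th:2}, which gives $K_z=3\kappa^{-1}n_1^{4/3}(1+o(1))\myp B$ as $n\to\infty$. Since $\|B\|>0$, this yields $\|K_z\|\sim 3\kappa^{-1}\|B\|\,n_1^{4/3}$, and in particular $\|K_z\|\asymp n_1^{4/3}$. (If one prefers to read the factor $(1+o(1))$ entrywise, the same conclusion follows from Lemma~\ref{lm:6.3}, which sandwiches $\|n_1^{-4/3}K_z\|$ between two positive quantities having finite nonzero limits, again using $B\neq0$.)

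For the second relation, recall from the discussion preceding the lemma that for all large $n$ the matrix $K_z$ is symmetric and positive definite, so $\det K_z\ne0$ and $V_z=K_z^{-1/2}$ is a well-defined symmetric positive definite matrix. Being symmetric, $V_z$ satisfies $\|V_z\|^2=\|V_z^{\topp}\mynn V_z\|=\|V_z^2\|=\|K_z^{-1}\|$ by Lemma~\ref{lm:6.2}, so it suffices to estimate $\|K_z^{-1}\|$. Since $K_z$ is a symmetric $2\times2$ matrix with $\det K_z\ne0$, Lemma~\ref{lm:6.5} gives $\|K_z^{-1}\|=\|K_z\|\myp/\myp|\mynn\det K_z|$. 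By the first relation $\|K_z\|\asymp n_1^{4/3}$, while Lemma~\ref{lm:detK_z} together with the strict positivity of the bracketed constant in~(\ref{eq:detK}) (it equals $(3/\kappa)^2\det B>0$, established above) gives $\det K_z\asymp n_1^{8/3}$. Hence $\|K_z^{-1}\|\asymp n_1^{4/3}/n_1^{8/3}=n_1^{-4/3}$, and taking square roots yields $\|V_z\|\asymp n_1^{-2/3}$, completing the proof.

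The argument is essentially bookkeeping, and the main (indeed only) obstacle is the strict Cauchy--Schwarz inequality ensuring $\det B>0$: without it, Lemma~\ref{lm:detK_z} would not pin down the order of $\det K_z$ from below, and the lower bound for $\|V_z\|$ (equivalently, the upper bound for $\|K_z^{-1}\|$) could fail.
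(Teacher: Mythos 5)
Your argument is correct and follows essentially the same route as the paper: the first estimate comes from Theorem~\ref{th:2} (the paper phrases it via the entrywise bounds of Lemma~\ref{lm:6.3}, which you also mention as an alternative), and the second from $\|V_z\|^2=\|K_z^{-1}\|$ (Lemma~\ref{lm:6.2}), the identity $\|K_z^{-1}\|=\|K_z\|/\det K_z$ (Lemma~\ref{lm:6.5}), and the determinant asymptotics of Lemma~\ref{lm:detK_z}, whose positivity rests on the same strict Cauchy--Schwarz observation the paper makes just before the lemma. Your explicit verification that $\det B>0$ (strictness because $g'_\gamma$ is non-constant) is exactly the point the paper handles parenthetically, so nothing is missing.
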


\begin{proof}
Using Theorem \ref{th:2} and the upper bound in Lemma \ref{lm:6.3},
we get
\begin{equation}\label{dL_3}
\|K_z\|^2\le K_z(1,1)^2+2 K_z(1,2)+ K_z(2,2)^2=O(n_1^{8/3}).
\end{equation}
On the other hand, by Theorem \ref{th:2} and the lower bound in
Lemma \ref{lm:6.3}
\begin{align}
\notag
\|K_n\|^2&\ge\frac{1}{2}\,\bigl(K_z(1,1)^2+K_z(2,2)^2\bigr)\\
\label{dL_4} &\ge K_z(1,1)\mypp
K_z(2.2)\sim\left(\frac{3}{\kappa}\right)^2 n_1^{8/3}
\int_0^1\frac{\dif u}{g''_\gamma(u)^{1/3}}
\int_0^1\frac{g'_\gamma(u)^2\,\dif u}{g''_\gamma(u)^{1/3}}\myp.
\end{align}
Combining (\ref{dL_3}) and (\ref{dL_3}) we obtain the first estimate
in (\ref{eq:norms}).

Further, Lemma \ref{lm:6.2} implies that $\|V_z\|^2=\|K_z^{-1}\|$.
In turn, Lemma \ref{lm:6.5} yields $\|K_z^{-1}\|=\|K_z\|/\det K_z$,
and it remains to use Lemmas \ref{lm:detK_z} and \ref{lm:K_z} to
obtain the second part of (\ref{eq:norms}).
\end{proof}

\subsection{Asymptotics of the moment sums}\label{sec6.3}

Denote $\nu_0(x):=\nu(x)-\EE_{z}^{\gamma}[\nu(x)]$ ($x\in \calX$),
and for $q\in\NN$ set
\begin{equation}\label{eq:m-mu}
m_q(x):=\EE_{z}^{\gamma}\myn\bigl[\nu(x)^q\bigr],\qquad
\mu_q(x):=\EE_{z}^{\gamma}\myn\bigl|\nu_0(x)^q\bigr|
\end{equation}
(for notational simplicity, we suppress the dependence on $\gamma$
and $z$).

The following two-sided estimate of $\mu_q(x)$ can be easily proved
using Newton's binomial formula and Lyapunov's inequality (cf.\
\cite[Lemmas 6.2 and 6.6]{BZ4}).
\begin{lemma}\label{lm:mu<m}
For each\/ $q\in\NN$ and all\/ $x\in \calX$\textup{,}
\begin{equation}\label{eq:mu<m}
\mu_2(x)^{q/2}\le \mu_q(x)\le 2^q \myp m_q(x).
\end{equation}
\end{lemma}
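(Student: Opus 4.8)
The plan is to prove the two-sided bound \eqref{eq:mu<m} for $\mu_q(x)$ in terms of the raw moments $m_q(x)$, working separately on the two inequalities.

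\textbf{Lower bound.} For the inequality $\mu_2(x)^{q/2}\le\mu_q(x)$ I would simply invoke Lyapunov's (or Jensen's) inequality: for a nonnegative random variable $|\nu_0(x)|$ and exponents $2\le q$, the map $r\mapsto \bigl(\EE_z^\gamma|\nu_0(x)|^{r}\bigr)^{1/r}$ is non-decreasing, hence $\bigl(\mu_2(x)\bigr)^{1/2}\le\bigl(\mu_q(x)\bigr)^{1/q}$, which rearranges to $\mu_2(x)^{q/2}\le\mu_q(x)$. This is immediate and requires no structural information about $\nu(x)$.

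\textbf{Upper bound.} For $\mu_q(x)\le 2^q\, m_q(x)$ I would start from $\nu_0(x)=\nu(x)-m_1(x)$ with $m_1(x)=\EE_z^\gamma[\nu(x)]$, and expand by Newton's binomial formula. By the triangle inequality in $L^q$ (Minkowski),
\begin{equation*}
\mu_q(x)^{1/q}=\bigl\|\nu(x)-m_1(x)\bigr\|_q\le \|\nu(x)\|_q+m_1(x)=m_q(x)^{1/q}+m_1(x).
\end{equation*}
Since $\nu(x)\ge0$, Lyapunov's inequality gives $m_1(x)=\EE_z^\gamma[\nu(x)]\le \bigl(\EE_z^\gamma[\nu(x)^q]\bigr)^{1/q}=m_q(x)^{1/q}$, so $\mu_q(x)^{1/q}\le 2\,m_q(x)^{1/q}$ and therefore $\mu_q(x)\le 2^q m_q(x)$, as claimed. (One may instead expand $|\nu_0(x)|^q\le\bigl(\nu(x)+m_1(x)\bigr)^q=\sum_{j=0}^q\binom{q}{j}\nu(x)^j m_1(x)^{q-j}$, take expectations, and bound each $\EE_z^\gamma[\nu(x)^j]\,m_1(x)^{q-j}\le m_q(x)$ using Lyapunov again, summing $\sum_j\binom{q}{j}=2^q$; both routes are routine.)

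I do not anticipate a genuine obstacle here — the statement is elementary and the only mild point to be careful about is that all the relevant moments are finite, which holds because under $\QQ_z^\gamma$ each $\nu(x)$ is geometric with parameter $z^x\in[0,1)$ (see \eqref{gz}), so $m_q(x)<\infty$ for every $q$ and every $x\in\calX$. With finiteness in hand, the Lyapunov/Minkowski manipulations above are valid verbatim.
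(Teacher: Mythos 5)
Your proof is correct and follows essentially the same route as the paper, which disposes of the lemma by exactly the ingredients you use --- Newton's binomial formula (equivalently your Minkowski variant) for the upper bound and Lyapunov's inequality for both the lower bound and the term-by-term estimates, citing \cite[Lemmas 6.2 and 6.6]{BZ4}. Your parenthetical restriction to $q\ge2$ for the lower bound is the right reading: that inequality can fail for $q=1$, but it is only ever invoked for $q\ge2$ (cf.\ Lemma \ref{lm:muk}).
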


Next, we  need a general upper bound for the moments of geometric
random variables proved in \cite[Lemma~6.3]{BZ4}.
\begin{lemma}\label{lm:6.8}
For each\/ $q\in\NN$\textup{,} there exists a constant\/ $C_q>0$
such that\myp\textup{,} for all\/ $x\in \calX$\textup{,}
\begin{equation}\label{t=0}
%  \frac{c_{k}\myp z^{kx}}{(1-z^x)^k}\le
  m_q(x)\le \frac{C_{q}z^{x}}{(1-z^x)^q}\myp.
\end{equation}
\end{lemma}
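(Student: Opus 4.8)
The plan is to compute $m_q(x)$ directly from the geometric law (\ref{gz}) and extract an explicit value of the constant, namely $C_q=q!$. By (\ref{gz}),
\[
m_q(x)=\EE_z^{\gamma}\myn\bigl[\nu(x)^q\bigr]=\sum_{k=0}^\infty k^q\myp z^{kx}(1-z^x).
\]
For $q\ge 1$ the $k=0$ term vanishes, so, writing $w:=z^x\in[0,1)$ and peeling off one power of $w$ before summing, then reindexing $k=j+1$, one gets
\[
m_q(x)=w\myp(1-w)\sum_{j=0}^\infty (j+1)^q\,w^{j}.
\]
Thus it suffices to bound the power series $S_q(w):=\sum_{j=0}^\infty (j+1)^q w^j$ by $q!\,(1-w)^{-(q+1)}$, uniformly in $w\in[0,1)$.

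For this I would use the elementary pointwise inequality
\[
(j+1)^q\le (j+1)(j+2)\cdots(j+q)=q!\binom{j+q}{q}\qquad(j\ge 0,\ q\ge 1),
\]
together with the classical generating-function identity $\sum_{j=0}^\infty \binom{j+q}{q} w^j=(1-w)^{-(q+1)}$. These give $S_q(w)\le q!\,(1-w)^{-(q+1)}$, and hence
\[
m_q(x)\le w\myp(1-w)\cdot\frac{q!}{(1-w)^{q+1}}=\frac{q!\,z^x}{(1-z^x)^q},
\]
which is (\ref{t=0}) with $C_q:=q!$. (Alternatively, one may note $m_q(x)=A_q(w)/(1-w)^q$, where $A_q$ is the $q$-th Eulerian polynomial, and use $A_q(0)=0$, $A_q(1)=q!$ together with nonnegativity of its coefficients; this yields the same bound.)

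There is essentially no obstacle here: the statement reduces to a one-line computation once the two ingredients above are in place. The only point needing a little care — and the reason the $k=0$ term is dropped first — is that to obtain the factor $z^x$ in the numerator of (\ref{t=0}) (rather than merely $m_q(x)\le q!\,(1-z^x)^{-q}$) one must factor out exactly one power of $w=z^x$; this surviving power reflects that the series effectively starts at $k=1$.
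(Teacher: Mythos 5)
Your proof is correct, and it is in fact more informative than what the paper offers: the paper does not prove Lemma \ref{lm:6.8} at all, but simply cites an external result (Lemma 6.3 of the reference [BZ4]). Your argument is a clean, self-contained derivation: writing $w=z^x\in[0,1)$, the geometric law (\ref{gz}) gives $m_q(x)=w\myp(1-w)\sum_{j\ge0}(j+1)^q w^j$, and the pointwise bound $(j+1)^q\le(j+1)(j+2)\cdots(j+q)=q!\binom{j+q}{q}$ combined with the negative binomial series $\sum_{j\ge0}\binom{j+q}{q}w^j=(1-w)^{-(q+1)}$ yields exactly (\ref{t=0}) with the explicit constant $C_q=q!$. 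The only point worth a word of care — which you address — is retaining one factor of $w=z^x$ (i.e., noting the series starts at $k=1$), since this is what produces the numerator $z^x$ needed later in Lemma \ref{lm:6.9}, where the factor $z^x$ ensures summability over $x\in\calX$; a bound of the form $q!\,(1-z^x)^{-q}$ alone would not suffice. The Eulerian-polynomial remark is a correct alternative phrasing of the same computation.
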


Using estimate (\ref{t=0}) and repeating the calculations in the
proof of Lemma 6.4 in \cite{BZ4}, one obtains the following
asymptotic bound.

\begin{lemma}\label{lm:6.9}
For each\/ $q\in\NN$\textup{,}
\begin{equation*}
\sum_{x\in \calX} |x|^q\mypp  m_q(x)=O(1)\,n_1^{(q+2)/3}, \qquad
n\to\infty.
\end{equation*}
\end{lemma}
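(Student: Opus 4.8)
The plan is to establish the bound by the same Möbius--inversion-plus-Riemann-sum strategy used throughout Sections~\ref{sec4}--\ref{sec6}, now applied to the sum $\sum_{x\in\calX}|x|^q m_q(x)$ rather than to a first- or second-order moment sum. First I would invoke Lemma~\ref{lm:6.8} to replace $m_q(x)$ by $C_q z^x/(1-z^x)^q$; since $z^x=\rme^{-\alpha_n\langle\tilde x,\delta(\tau(\tilde x))\rangle}$ with $\langle\tilde x,\delta\rangle\ge\delta_{*}(x_1+x_2)/2>0$, we have $1-z^x\asymp\alpha_n\langle\tilde x,\delta\rangle$ on the relevant range (and is bounded away from $0$ when $\alpha_n\langle\tilde x,\delta\rangle$ is large), so the problem reduces to estimating $\sum_{x\in\calX}|x|^q\,z^x/(1-z^x)^q$. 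Expanding $(1-z^x)^{-q}=\sum_{k\ge 1}\binom{k+q-2}{q-1}z^{(k-1)x}$ (or more crudely bounding it by a convergent combination of geometric series), each resulting piece has the form $\sum_{x\in\calX}|x|^q\,c_k\,z^{kx}$, to which the Möbius identity (\ref{eq:Mobius}) of Lemma~\ref{lm:Mobius} applies, turning the sum over $\calX$ into a sum over $\ZZ_+^2$ weighted by $\mu(m)$.

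Next I would carry out the scaling. After the substitution and writing $h=\alpha_n km$, each inner double sum over $(x_1,x_2)\in\ZZ_+^2$ is an $\alpha_n$-spaced Riemann sum for an integral of the type $\iint_{\RR_+^2}|x|^q\,\rme^{-h\langle x,\delta(\tau(x))\rangle}\,\dif x_1\dif x_2$, which by the change of variables $x_2=sx_1$ and integration in $x_1$ equals $\const\cdot h^{-(q+2)}\int_{t_0}^{t_1}\sqrt{1+s^2}^{\,q}\,(\delta_1(s)+s\delta_2(s))^{-(q+2)}\,\dif s$; the inner integral is finite because $\delta_1(s)+s\delta_2(s)=\kappa\,g''_\gamma(u_\gamma(s))^{1/3}\sqrt{1+s^2}$ is bounded below by assumption (\ref{kr}). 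Each such piece of the sum therefore contributes $\asymp\alpha_n^{-(q+2)}=(\rho_n n_1)^{(q+2)/3}\asymp n_1^{(q+2)/3}$, and summing over $k$, $m$, and the $k$-index from the binomial expansion converges because the general term is dominated by something like $O(1)\,k^{-2}m^{-2}$ (the extra powers of $k$ from $|x|^q$ and the denominator are more than compensated by the $h^{-(q+2)}$ scaling). The uniform tail control needed to justify dominated convergence is exactly the elementary geometric-sum estimate already exploited in (\ref{eq:hkm}) and in the proof of Theorem~\ref{rs}, so I would cite that bound rather than reprove it.

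Since the statement only asks for an $O(1)\,n_1^{(q+2)/3}$ upper bound, not a precise asymptotic, the cleanest route is to skip the exact limit evaluation and simply dominate: bound $1-z^x$ below by $\min\{1,\alpha_n\langle\tilde x,\delta\rangle\}$ times an absolute constant, bound $|x|^q$ and $\langle\tilde x,\delta\rangle^{-q}$ crudely by $(x_1+x_2)^q$ and $(\delta_*(x_1+x_2)/2)^{-q}$ on the region where the argument is small, and estimate the resulting $\sum_{x_1,x_2\ge 0}(x_1+x_2)^{q}\,\rme^{-\alpha_n k(x_1+x_2)\delta_*/2}$ by $\sum_{y\ge 0}y^{q+1}\rme^{-\alpha_n k\delta_* y/2}=O(1)\,(\alpha_n k)^{-(q+2)}$, exactly as in (\ref{eq:hkm}). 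This is the content of ``repeating the calculations in the proof of Lemma~6.4 in \cite{BZ4}'': the only inputs are Lemma~\ref{lm:6.8}, the Möbius identity, and the scaling $\alpha_n^3=1/(\rho_n n_1)$.

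The main obstacle, such as it is, is purely bookkeeping: one must choose the split between the ``small argument'' region (where $1-z^x\asymp\alpha_n\langle\tilde x,\delta\rangle$ and the polynomial factor $|x|^q(1-z^x)^{-q}$ is genuinely large) and the ``large argument'' region (where $1-z^x\asymp 1$ and $z^x$ decays exponentially, so the contribution is negligible), and verify that the convergence of the double $k,m$-series survives the extra factor $m^2$ or $m^q$ produced by the Möbius weight when passing to $\ZZ_+^2$ — this works because that factor is $\mu(m)m^{\,j}$ against $F(\cdot)=O(h^{-(q+2)})=O((\alpha_n km)^{-(q+2)})$ with $q+2\ge 3>j+1$ for the first-moment case and more than enough room in general. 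Everything else is a direct transcription of the already-established machinery, so I would write the proof in one short paragraph referring back to Lemmas~\ref{lm:Mobius} and~\ref{lm:6.8} and to estimate~(\ref{eq:hkm}).
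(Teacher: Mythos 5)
Your proposal is correct and follows essentially the paper's own route: the paper proves this lemma precisely by combining the geometric-moment bound of Lemma~\ref{lm:6.8} with the elementary domination argument (bounding the sum over $\calX$ by the sum over $\ZZ_+^2$ and using the geometric-series estimate as in (\ref{eq:hkm}), which is the content of the cited Lemma~6.4 of \cite{BZ4}), yielding $O(\alpha_n^{-(q+2)})=O(n_1^{(q+2)/3})$. The M\"obius-inversion/Riemann-sum machinery in your first two paragraphs is unnecessary for this one-sided bound, as you yourself note in the final paragraph, which is exactly the intended proof.
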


Lemma \ref{lm:6.9}, together with bounds (\ref{eq:mu<m}) and Theorem
\ref{th:2}, implies the following asymptotic estimate (cf.\
\cite[Lemma~6.6]{BZ4}).
\begin{lemma}\label{lm:muk}
For any integer\/ $q\ge 2$\textup{,}
\begin{equation*}
\sum_{x\in \calX}|x|^q\mu_q(x)\asymp n_1^{(q+2)/3},\qquad
n\to\infty.
\end{equation*}
\end{lemma}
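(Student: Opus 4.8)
\textbf{Proof proposal for Lemma \ref{lm:muk}.}

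The plan is to bracket the sum $\sum_{x\in\calX}|x|^q\mu_q(x)$ between two quantities of order $n_1^{(q+2)/3}$, using the two-sided estimate \eqref{eq:mu<m} for the lower and upper bounds respectively. For the upper bound, the inequality $\mu_q(x)\le 2^q m_q(x)$ from Lemma \ref{lm:mu<m} gives immediately
$$
\sum_{x\in\calX}|x|^q\mu_q(x)\le 2^q\sum_{x\in\calX}|x|^q m_q(x)=O(1)\,n_1^{(q+2)/3}
$$
by Lemma \ref{lm:6.9}, so this direction is essentially free. The content is in the matching lower bound $\sum_{x\in\calX}|x|^q\mu_q(x)\gg n_1^{(q+2)/3}$.

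For the lower bound I would use the left half of \eqref{eq:mu<m}, namely $\mu_q(x)\ge\mu_2(x)^{q/2}$, so that it suffices to show $\sum_{x\in\calX}|x|^q\mu_2(x)^{q/2}\gg n_1^{(q+2)/3}$. Here $\mu_2(x)=\Var[\nu(x)]=z^x/(1-z^x)^2$ by \eqref{eq:nu}, and by the choice \eqref{z} of $z(x)$ one has $\langle\tilde x,\delta(\tau(\tilde x))\rangle\asymp|x|$ uniformly (using the lower bound \eqref{ogd} together with the boundedness of the $\delta_j$ on $[t_0,t_1]$, after noting $z^x=0$ off the relevant slope range), hence $z^x=\exp\{-\alpha_n\langle\tilde x,\delta(\tau(\tilde x))\rangle\}$ is bounded away from $1$ precisely when $\alpha_n|x|\gtrsim 1$. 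Thus $\mu_2(x)^{q/2}\asymp z^{qx/2}$ on the regime $\alpha_n|x|\gtrsim 1$, and $\sum_{x\in\calX}|x|^q\mu_2(x)^{q/2}\ge\sum_{x\in\calX}|x|^q z^{qx/2}(1-z^x)^{-q}$. To evaluate this I would apply the M\"obius machinery of Lemma \ref{lm:Mobius} exactly as in the proof of Theorem \ref{th:2}: the restriction to co-prime $x$ costs only a factor $\sum_m\mu(m)m^{-\text{(something)}}$, and the resulting sum over all of $\ZZ_+^2$ is a Riemann sum for $\alpha_n^{-(q+2)}\iint_{t_0\le x_2/x_1\le t_1}|x|^q\exp\{-\tfrac{q}{2}\langle x,\delta(\tau(x))\rangle\}\,\dif x$, which is a finite positive constant times $\alpha_n^{-(q+2)}\asymp n_1^{(q+2)/3}$ since $\alpha_n\asymp n_1^{-1/3}$. (The integral converges because the exponential decay dominates the polynomial factor $|x|^q$, and is strictly positive because the integrand is continuous and positive on a set of positive measure.)

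The main obstacle is the lower bound, and within it the point requiring care is that $\mu_2(x)^{q/2}$ must be compared to $z^{qx/2}$ rather than to $z^{qx}$ — the factor $(1-z^x)^{-q}$ is harmless (bounded below by $1$) but means the effective "temperature" in the exponential is $q/2$ rather than $q$; this only changes the value of the constant, not the order $n_1^{(q+2)/3}$. One should also check that discarding the region $\alpha_n|x|\lesssim 1$ (where $\mu_2(x)$ is large but $|x|$ is small) does not destroy the lower bound — it does not, since the dominant contribution to the Riemann sum comes precisely from $|x|\asymp\alpha_n^{-1}$. Apart from this, the argument is a routine repetition of the asymptotic evaluation already carried out several times in Sections \ref{sec4}--\ref{sec6}, so I would simply cite \cite[Lemma~6.6]{BZ4} for the analogous computation and indicate the (minor) modifications.
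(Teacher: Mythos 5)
Your proposal is correct, and its upper-bound half coincides with the paper's: the bound $\mu_q(x)\le 2^q m_q(x)$ from Lemma \ref{lm:mu<m} combined with Lemma \ref{lm:6.9} is exactly how the paper gets $O(n_1^{(q+2)/3})$. Where you diverge is the lower bound. The paper (following \cite[Lemma~6.6]{BZ4}) also starts from Lyapunov's inequality $\mu_q(x)\ge\mu_2(x)^{q/2}$, but then finishes by exploiting the already-established second-moment asymptotics of Theorem \ref{th:2} (i.e.\ $\sum_x x_ix_j\mypp\mu_2(x)\asymp n_1^{4/3}$), so no new integral estimate is needed; you instead re-run the M\"obius-inversion/Riemann-sum machinery of Sections \ref{sec4}--\ref{sec6} to evaluate $\sum_{x\in\calX}|x|^q z^{qx/2}$ directly, in effect proving a $q$-th order analogue of Theorem \ref{th:2}. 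Both routes are valid and give the right order $\alpha_n^{-(q+2)}\asymp n_1^{(q+2)/3}$: yours is self-contained but duplicates work the paper has already done, while the paper's citation-based argument is shorter precisely because it leans on Theorem \ref{th:2}. One small simplification to your write-up: the restriction to the regime $\alpha_n|x|\gtrsim 1$ (and the accompanying discussion of where $\mu_2(x)\asymp z^x$) is unnecessary, since $\mu_2(x)^{q/2}=z^{qx/2}(1-z^x)^{-q}\ge z^{qx/2}$ holds identically, so you may pass to $\sum_{x\in\calX}|x|^q z^{qx/2}$ at once; the positivity of the limiting integral and of the M\"obius factor $\sum_m\mu(m)m^{-(q+2)}=1/\zeta(q+2)$ then closes the argument as you indicate.
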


Using Lemma \ref{lm:muk} and the lower bound $\delta_j(t)\ge
\delta^*$ (see (\ref{ogd})), the next asymptotic bound is obtained
by a straightforward adaptation of the proof of a similar result in
\cite[Lemma~6.7]{BZ4}.

\begin{lemma}\label{lm:6.6}
For each\/ $q\in\NN$\textup{,}
\begin{equation*}
%\label{E_z_6_i}
\EE_{z}^{\gamma}|\ell_\varGamma-\EE_{z}^{\gamma}(\ell_\varGamma)|^{q}
=O\bigl(n_1^{2q/3}\bigr),\qquad n\to\infty.
\end{equation*}
\end{lemma}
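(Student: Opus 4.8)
The plan is to reduce the $q$-th centered moment of the total length $\ell_\varGamma=\ell_\varGamma(\infty)=\sum_{x\in\calX}|x|\,\nu(x)$ to the moment sums already estimated in Lemma~\ref{lm:muk}. Write $\ell_\varGamma-\EE_z^\gamma(\ell_\varGamma)=\sum_{x\in\calX}|x|\,\nu_0(x)$, a sum of independent, mean-zero random variables. The standard tool here is the Marcinkiewicz--Zygmund (or Rosenthal) inequality: for independent centered summands $Y_x:=|x|\,\nu_0(x)$ and any $q\ge1$,
\begin{equation*}
\EE_z^\gamma\Bigl|\sum_{x\in\calX}Y_x\Bigr|^q
\le C_q\,\EEE\Bigl(\sum_{x\in\calX}Y_x^2\Bigr)^{q/2}
\le C_q'\Bigl(\sum_{x\in\calX}\EE_z^\gamma[Y_x^2]\Bigr)^{q/2}
+C_q'\sum_{x\in\calX}\EE_z^\gamma|Y_x|^q,
\end{equation*}
where the last step (Rosenthal's inequality, valid for $q\ge2$) separates the ``Gaussian'' part from the ``heavy-tail'' part; for $q<2$ one may instead just interpolate from the $q=2$ case via Lyapunov's inequality, so it suffices to treat integer $q\ge2$.

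First I would record that $\EE_z^\gamma[Y_x^2]=|x|^2\mu_2(x)$ and $\EE_z^\gamma|Y_x|^q=|x|^q\mu_q(x)$, so the right-hand side above is
\begin{equation*}
C_q'\Bigl(\sum_{x\in\calX}|x|^2\mu_2(x)\Bigr)^{q/2}
+C_q'\sum_{x\in\calX}|x|^q\mu_q(x).
\end{equation*}
By Lemma~\ref{lm:muk} with the exponent $2$ we have $\sum_{x}|x|^2\mu_2(x)\asymp n_1^{4/3}$, so the first term is $O\bigl(n_1^{(4/3)\cdot(q/2)}\bigr)=O(n_1^{2q/3})$; by Lemma~\ref{lm:muk} with exponent $q$ the second term is $O\bigl(n_1^{(q+2)/3}\bigr)$. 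Since $(q+2)/3\le 2q/3$ for $q\ge2$, the first term dominates, and we conclude $\EE_z^\gamma|\ell_\varGamma-\EE_z^\gamma(\ell_\varGamma)|^q=O(n_1^{2q/3})$, as claimed.

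The only genuine issue is to make sure the version of the moment inequality invoked is applicable: the summands $Y_x$ are independent but not identically distributed, and the index set $\calX$ is countably infinite. Both are fine — Rosenthal's inequality holds for arbitrary finite collections of independent centered random variables with a constant $C_q$ depending only on $q$, and one passes to the infinite sum by monotone/dominated convergence once Lemma~\ref{lm:muk} guarantees the relevant series converge. So the ``hard part'' is really just bookkeeping: checking that the exponent arithmetic gives $2q/3$ as the leading power and confirming (via Lemma~\ref{lm:mu<m}, i.e. $\mu_2(x)^{q/2}\le\mu_q(x)$) that the bound is sharp in order, should one want it. Alternatively, if one prefers to avoid quoting Rosenthal, the same $O(n_1^{2q/3})$ bound follows by expanding $\bigl(\sum_x|x|\nu_0(x)\bigr)^q$ multinomially, using independence to kill terms with an isolated factor, and estimating the surviving products by Hölder together with Lemma~\ref{lm:muk}; this is the route taken in \cite[Lemma~6.7]{BZ4}, and the present proof is a direct adaptation of it, as the statement already indicates.
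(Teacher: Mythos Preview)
Your argument is correct. The paper itself does not spell out a proof here; it simply cites \cite[Lemma~6.7]{BZ4} and says the present result is a straightforward adaptation using Lemma~\ref{lm:muk} and the lower bound $\delta_j\ge\delta_*$. That reference proceeds, as you note at the end, by expanding $\bigl(\sum_x |x|\,\nu_0(x)\bigr)^{2q}$ multinomially, discarding terms with an isolated factor by independence and centering, and bounding the survivors via H\"older and the moment sums $\sum_x |x|^q\mu_q(x)$.

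Your primary route via Rosenthal's inequality is a genuinely different (and cleaner) packaging: it replaces the bare-hands combinatorics by a single quoted inequality, after which only the two quantities $\bigl(\sum_x |x|^2\mu_2(x)\bigr)^{q/2}$ and $\sum_x |x|^q\mu_q(x)$ need to be estimated, both supplied by Lemma~\ref{lm:muk}. The exponent bookkeeping $(q+2)/3\le 2q/3$ for $q\ge2$ is exactly right, and the $q=1$ case by Lyapunov from $q=2$ is fine. The only cosmetic point is that the chain of inequalities you display conflates Marcinkiewicz--Zygmund with Rosenthal; it is cleanest to invoke Rosenthal's inequality directly for independent centered summands and $q\ge2$, which gives the two-term bound in one step. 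The remark about passing from finite to countable index sets is valid and necessary. In short: same input (Lemma~\ref{lm:muk}), same output, but your argument trades the explicit moment expansion of \cite{BZ4} for an off-the-shelf inequality.
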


Finally, let us consider the \textit{Lyapunov coefficient}
\begin{equation}\label{L3}
L_z:=\|V_z\|^{3} \sum_{x\in \calX}|x|^3\mu_3(x),
\end{equation}
The next asymptotic estimate is an immediate consequence of Lemmas
\ref{lm:K_z} and \ref{lm:muk}.

\begin{lemma}\label{lm:7.1}
As\/ $n\to\infty$\textup{,} one has\/ $L_z\asymp n_1^{-1/3}$.
\end{lemma}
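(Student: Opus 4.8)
The plan is to combine the two ingredients that have just been established, namely the norm estimate for $V_z$ from Lemma \ref{lm:K_z} and the moment-sum estimate from Lemma \ref{lm:muk}, and simply multiply them. First I would recall the definition \eqref{L3} of the Lyapunov coefficient, $L_z=\|V_z\|^{3}\sum_{x\in\calX}|x|^3\mu_3(x)$, so that $L_z$ splits as a product of two factors whose asymptotics are both known. For the first factor, Lemma \ref{lm:K_z} gives $\|V_z\|\asymp n_1^{-2/3}$ as $n\to\infty$, hence $\|V_z\|^{3}\asymp n_1^{-2}$. For the second factor, Lemma \ref{lm:muk} applied with $q=3$ (which is an admissible value since $3\ge 2$) gives $\sum_{x\in\calX}|x|^3\mu_3(x)\asymp n_1^{(3+2)/3}=n_1^{5/3}$.

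Then I would observe that the relation $a_n\asymp b_n$ is multiplicative: if $a_n\asymp b_n$ and $a_n'\asymp b_n'$ then $a_n a_n'\asymp b_n b_n'$, which is immediate from the definition of $\asymp$ given in the ``Some notations'' paragraph (the liminf and limsup of the ratio of the products are controlled by the products of the corresponding liminf's and limsup's, all of which are finite and positive). Applying this to the two factors above yields
\begin{equation*}
L_z\asymp n_1^{-2}\cdot n_1^{5/3}=n_1^{-1/3},\qquad n\to\infty,
\end{equation*}
which is exactly the claimed estimate.

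There is essentially no obstacle here: the lemma is a bookkeeping corollary, and all the analytic work has already been done in Sections \ref{sec5} and \ref{sec6} (the delicate steps being the Möbius-inversion and Riemann-sum arguments behind Theorem \ref{th:2}, the Cauchy–Schwarz positivity of $K_z$, and the geometric-moment bounds feeding into Lemma \ref{lm:muk}). The only point worth a word of care is making sure that $q=3$ is covered by Lemma \ref{lm:muk}, whose hypothesis is ``any integer $q\ge 2$'', so there is no issue; and that $\|V_z\|$ is well defined, which was noted just before Lemma \ref{lm:K_z} since $\det K_z>0$ makes $K_z$ invertible. Hence the proof is a one-line multiplication of two already-proved asymptotic relations.
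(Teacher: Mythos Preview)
Your proposal is correct and follows exactly the same approach as the paper, which states the lemma as ``an immediate consequence of Lemmas \ref{lm:K_z} and \ref{lm:muk}.'' You have simply made explicit the one-line multiplication that the paper leaves implicit.
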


\section{Local limit theorem}\label{sec7}

The role of a local limit theorem in our approach is to yield the
asymptotics of the probability
$\QQ^\gamma_z\{\xi=n\}\equiv\QQ^\gamma_z(\CP_n)$ appearing in the
representation of the measure $\PP^\gamma_n$ as a conditional
distribution, $\PP^\gamma_n(\cdot)=\QQ^\gamma_z(\cdot\mypp|\CP_n)
=\QQ^\gamma_z(\cdot)/\QQ^\gamma_z(\CP_n)$.

As before, we denote $a_z:=\EE^\gamma_{z}(\xi)$ and $K_z:=
\Cov(\xi,\xi)= \EE^\gamma_{z}$.
%(\xi-a_z\myn)^{\myn\topp}(\xi-a_z\myn)$.
Let $f_{0,I}(\cdot)$ be the density of a standard two-dimensional
normal distribution $\mathcal{N}(0,I)$ (i.e., with zero mean and
identity covariance matrix),
\begin{equation*}
f_{0,I}(x)=\frac{1}{2\pi}\,\rme^{-|x|^2\myn/2},\qquad x\in\RR^2.
\end{equation*}
Then the density of the normal distribution $\mathcal{N}(a_z,K_z)$
is given by
\begin{equation}\label{eq:phi1}
f_{a_z\myn,K_z}(x)= (\det K_z)^{-1/2}f_{0,I}\bigl((x-a_z)\mypp
V_z\bigr),\qquad x\in\RR^2.
\end{equation}

%Furthermore, let $V_z$ be a symmetric positive definite matrix such
%that
%\begin{equation}\label{eq:V}
%V_z^2=K_z^{-1},\qquad K_z=V_z^{-2}
%end{equation} (the inverse matrix $K_z^{-1}$ does exist since $K_z$
%is non-degenerate by Theorem~\ref{th:2}).

%
%We are now in a position to state a local limit theorem for the
%asymptotics of the probability $\QQ_z^{\gamma}\{\xi=n\}$ as
%$n\to\infty$, which will play a crucial role in order to pass, using
%equation (\ref{u_1}), from the unconditional distribution
%\QQ_{z}^{\gamma}$ to the conditional one, $\PP_{n}^{\gamma}$.
%%Let
%%us point out that the random variables $\xi_1$, $\xi_2$ are given by
%%equations (\ref{xi_i}), so we deal here with a two-dimensional local
%%limit theorem for independent, non-identically distributed summands.

\begin{theorem}[Local limit theorem]\label{llt}
%Suppose that the parameters\/ $z_1(x)$\textup{,}  $z_2(x)$ $(x\in
%\mathcal{X})$ are chosen subject to formulas\/
%\textup{(\ref{z})}\textup{,} with\/ $\delta_1(t)$\textup{,}
%$\delta_2(t)$ defined in\/ \textup{(\ref{eq:delta12})}. Set\/
%$y_{m,n}:=(m-a_z)V_z$ \,$(m\in\ZZ_+^2)$.
%%, where $V_z^2=K_z^{-1}$.
%Then\textup{,}
Under Assumptions \textup{\ref{as2}} and
\textup{\ref{as:z}}\textup{,} uniformly in $m\in\ZZ_+^2$
%\textup{,}
\begin{equation}\label{eq:LCLT}
\QQ_z^{\gamma}\{\xi=m\}=f_{a_z\myn,K_z}(m)+O(n_1^{-5/3}).
\end{equation}
\end{theorem}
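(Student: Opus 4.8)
The plan is to prove the local limit theorem by the classical Fourier (characteristic‑function) method, following the strategy of \cite[\S7]{BZ4}. Since the $\nu(x)$ are independent and geometrically distributed, (\ref{gz}) gives the characteristic function of $\xi=\sum_{x\in\calX}x\mypp\nu(x)$ as
$$
\varphi_z(\theta):=\EE_z^{\gamma}\bigl[\rme^{\rmi\langle\theta,\xi\rangle}\bigr]=\prod_{x\in\calX}\frac{1-z^{x}}{1-z^{x}\rme^{\rmi\langle\theta,x\rangle}}\myp,\qquad\theta\in\RR^2 .
$$
Because $(1,0),(0,1)\in\calX$, the distribution of $\xi$ is aperiodic on $\ZZ^2$, so $\QQ_z^{\gamma}\{\xi=m\}$ is recovered by Fourier inversion over the torus, $\QQ_z^{\gamma}\{\xi=m\}=(2\pi)^{-2}\int_{[-\pi,\pi]^2}\rme^{-\rmi\langle\theta,m\rangle}\varphi_z(\theta)\,\dif\theta$, while $f_{a_z\myn,K_z}(m)=(2\pi)^{-2}\int_{\RR^2}\rme^{-\rmi\langle\theta,m\rangle}\rme^{\myp\rmi\langle\theta,a_z\rangle-\frac12\theta K_z\theta^{\topp}}\,\dif\theta$. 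Subtracting these, I would split the resulting integral of the difference into three ranges of $\theta$; since $|\rme^{-\rmi\langle\theta,m\rangle}|=1$, every estimate below is independent of $m$, which yields the asserted uniformity for free.

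\emph{Central region} $|\theta|\le T_n\myp n_1^{-2/3}$ with $T_n:=(\log n_1)^{1/2}$. Here I would expand $\log\varphi_z(\theta)=\sum_{x\in\calX}\log\EE_z^{\gamma}[\rme^{\rmi\langle\theta,x\rangle\nu(x)}]=\rmi\langle\theta,a_z\rangle-\tfrac12\myp\theta K_z\theta^{\topp}+R_z(\theta)$, where the cubic remainder obeys $|R_z(\theta)|\le C\myp|\theta|^3\sum_{x\in\calX}|x|^3\mu_3(x)=O\bigl(|\theta|^3\myp n_1^{5/3}\bigr)$ by Lemma \ref{lm:muk}; this is exactly the regime governed by the Lyapunov coefficient $L_z\asymp n_1^{-1/3}$ of Lemma \ref{lm:7.1}. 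Since $\theta K_z\theta^{\topp}\ge c_0\myp n_1^{4/3}|\theta|^2$ by the asymptotic positive‑definiteness of $K_z$ (Theorem \ref{th:2} and Lemma \ref{lm:detK_z}), on this region $|R_z(\theta)|\le\tfrac14\myp\theta K_z\theta^{\topp}$, so in particular $|\varphi_z(\theta)|\le\rme^{-\frac14\theta K_z\theta^{\topp}}$, and the elementary inequality $|\rme^{a}-\rme^{b}|\le|a-b|\max\{\rme^{\Re a},\rme^{\Re b}\}$ gives $\bigl|\varphi_z(\theta)-\rme^{\rmi\langle\theta,a_z\rangle-\frac12\theta K_z\theta^{\topp}}\bigr|=O\bigl(|R_z(\theta)|\bigr)\mypp\rme^{-\frac14\theta K_z\theta^{\topp}}$. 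Substituting $\theta=w\myp V_z$, so that $\theta K_z\theta^{\topp}=|w|^2$ and $\dif\theta=(\det K_z)^{-1/2}\dif w$, this integrates to $O\bigl((\det K_z)^{-1/2}L_z\bigr)=O\bigl(n_1^{-4/3}\cdot n_1^{-1/3}\bigr)=O(n_1^{-5/3})$ by Lemma \ref{lm:detK_z}; the tail mass $\int_{|w|>c\myp T_n}\rme^{-\frac12|w|^2}\dif w$ omitted from $f_{a_z\myn,K_z}(m)$ contributes only $O\bigl((\det K_z)^{-1/2}\rme^{-c\myp T_n^2}\bigr)=o(n_1^{-5/3})$ for this $T_n$.

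\emph{Intermediate annulus} $T_n\myp n_1^{-2/3}<|\theta|\le\varepsilon_0\myp n_1^{-1/3}$ with $\varepsilon_0$ a small fixed constant: the bound $|R_z(\theta)|\le\tfrac14\theta K_z\theta^{\topp}$, hence $|\varphi_z(\theta)|\le\rme^{-\frac14\theta K_z\theta^{\topp}}$, still holds for $|\theta|\le\varepsilon_0\myp n_1^{-1/3}$, and the same substitution bounds the contribution by $(\det K_z)^{-1/2}\int_{|w|>c\myp T_n}\rme^{-\frac14|w|^2}\dif w=o(n_1^{-5/3})$. \emph{Bulk region} $\varepsilon_0\myp n_1^{-1/3}<|\theta|$, $\theta\in[-\pi,\pi]^2$: here one must show that $|\varphi_z(\theta)|$ decays super‑polynomially in $n_1$. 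From $\bigl|\EE_z^{\gamma}[\rme^{\rmi t\nu(x)}]\bigr|^2=(1-z^x)^2\big/\bigl((1-z^x)^2+2\myp z^x(1-\cos t)\bigr)$, the $x$‑th factor is bounded away from $1$ whenever $z^x(1-\cos\langle\theta,x\rangle)/(1-z^x)^2$ is bounded below; and since $\calX$ contains points $x$ with $|x|\asymp n_1^{1/3}$ in a fixed positive proportion of directions — for which $z^x\asymp1$ and $(1-z^x)^{-2}\asymp1$ — one can exhibit $\asymp n_1^{2/3}$ such factors uniformly below $1$, forcing $|\varphi_z(\theta)|\le\rme^{-c\myp n_1^{2/3}}$ on this region. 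Consequently both $\int$ over the bulk and the matching Gaussian tail $\int_{|\theta|>\varepsilon_0 n_1^{-1/3}}\rme^{-\frac12\theta K_z\theta^{\topp}}\dif\theta$ are $O(\rme^{-c\myp n_1^{2/3}})=o(n_1^{-5/3})$.

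Collecting the estimates from the three regions and the two omitted Gaussian tails gives $\QQ_z^{\gamma}\{\xi=m\}-f_{a_z\myn,K_z}(m)=O(n_1^{-5/3})$ uniformly in $m\in\ZZ_+^2$, as claimed; the dominant error, of exact order $(\det K_z)^{-1/2}L_z\asymp n_1^{-5/3}$, comes from the central region. The genuinely delicate point — and the main obstacle — is the bulk estimate: because the summands $\nu(x)$ are strongly non‑identically distributed (their variances $z^x/(1-z^x)^2$ range from $o(1)$ to $\asymp n_1^{2/3}$), quantitative decay of $\varphi_z$ away from the origin cannot rest on a bare aperiodicity hypothesis but requires a careful count of how many directions $x\in\calX$ carry non‑negligible variance, i.e.\ genuine use of the arithmetic of $\calX$; this is carried out exactly as the analogous step in \cite[\S7]{BZ4}, and the remaining pieces are routine once Lemmas \ref{lm:muk}, \ref{lm:7.1} and \ref{lm:detK_z} and Theorem \ref{th:2} are in hand.
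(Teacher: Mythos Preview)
Your approach is essentially the paper's: Fourier inversion, split into a central region governed by the Lyapunov coefficient $L_z$ and a bulk region where $|\varphi_z|$ decays super-polynomially, then the change of variables $\theta=w V_z$ to extract the factor $(\det K_z)^{-1/2}L_z\asymp n_1^{-5/3}$ from the central piece. The paper packages the central estimate via ready-made lemmas (your Taylor-remainder bound is exactly their Lemma~\ref{lm:7.2_F}) and uses the single cutoff $|\lambda V_z^{-1}|=L_z^{-1}$ rather than your two cutoffs at $T_n\myp n_1^{-2/3}$ and $\varepsilon_0\myp n_1^{-1/3}$, but this is cosmetic.

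The one point of substantive difference is the bulk estimate. You sketch a two-dimensional counting argument (``$\asymp n_1^{2/3}$ factors uniformly below $1$''), and then defer to \cite[\S7]{BZ4} for the details. But the argument actually carried out in \cite{BZ4}, and reproduced in the paper via Lemma~\ref{lm:7.3} and the $I_3$-estimate, is \emph{not} that one: instead one restricts the sum $J_n(\lambda)=\frac14\sum_{x\in\calX}\rme^{-\alpha_n\langle\delta,x\rangle}(1-\cos\langle\lambda,x\rangle)$ to the one-dimensional family $x=(x_1,1)$ (respectively $x=(1,x_2)$), which sums to an explicit rational function and yields $J_n(\lambda)\gtrsim\alpha_n^{-1}\asymp n_1^{1/3}$ on the bulk --- weaker than your claimed $n_1^{2/3}$ but ample. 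This 1D trick sidesteps the equidistribution issue lurking in your sketch: for $|\theta|$ of order $1$ you need $\langle\theta,x\rangle$ bounded away from $2\pi\ZZ$ for a positive proportion of the relevant $x$, which you do not argue and which is not immediate. Your 2D heuristic can be made rigorous, but it is not what \cite{BZ4} does; the one-dimensional device is both what is cited and the path of least resistance.
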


Let us make some preparations for the proof. Recall that the random
variables $\{\nu(x)\}_{x\in \mathcal{X}}$
%, with respect to the distribution $\QQ_z^{\gamma}$,
are mutually independent and have
geometric distribution with parameter $z^{x}$, respectively.
%\begin{equation}\label{hargeom1}
%\EE_z^{\gamma}[\nu(x)]=\frac{z^{x}}{1-z^{x}}\myp,\qquad
%\Var_z^{\gamma}[\nu(x)]=\frac{z^{x}}{(1-z^{x})^2}\myp.
%\end{equation}
%Moreover,
In particular, the characteristic function
$\varphi_{\nu}(t):=\EE^\gamma_{z} (\rme^{\myp\rmi t\nu})$ of
$\nu(x)$ is given by
\begin{equation}\label{f_nu}
\varphi_{\nu}(t;x)=\frac{1-z^{x}}{1-z^{x} \mypp\rme^{\myp\rmi
t}}\myp;
\end{equation}
hence, the characteristic function $\varphi_{\xi}(\lambda):=\EE_{z}
(\rme^{\myp\rmi\langle\lambda,\,\xi\rangle})$ of the vector
$\xi=\sum_{x\in \mathcal{X}} x\nu(x)$ reads
\begin{equation}\label{f_xi}
\varphi_{\xi}(\lambda)=\prod_{x\in \mathcal{X}}
\varphi_{\nu}(\langle x,\lambda\rangle; x)=\prod_{x\in \mathcal{X}}
\frac{1-z^{x}} {1-z^{x}\mypp \rme^{\myp\rmi\langle
x,\lambda\rangle}}\mypp.
\end{equation}

Let us start with a general absolute estimate for the characteristic
function of a centered random variable (for a proof, see \cite[Lemma
7.10]{BZ4}).
%, p.~35].
\begin{lemma}\label{lm:7.2_f}
Let\/ $\varphi_{\nu_0}(t;x):=\EE^\gamma_{z} (\rme^{\myp\rmi\myp
t\myp \nu_0(x)})$ be the characteristic function of the random
variable $\nu_0(x):=\nu(x)-\EE^\gamma_{z}[\nu(x)]$. Then
\begin{align}\label{x.f_4}
|\varphi_{\nu_0}(t;x)|\le
\exp\Bigl\{-{\textstyle\frac12}\myp\mu_2(x)t^2+
{\textstyle\frac13}\myp\mu_3(x)|t|^3\Bigr\},\qquad t\in\RR\myp.
\end{align}
%where $\mu_q(x):=\EE^\gamma_{z}|\nu_0(x)^q|$.
\end{lemma}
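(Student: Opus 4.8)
The plan is to prove Lemma~\ref{lm:7.2_f} directly from the definition of the characteristic function of the centred variable $\nu_0(x)=\nu(x)-\EEE^\gamma_z[\nu(x)]$, by estimating $|\varphi_{\nu_0}(t;x)|$ through its logarithm (the second characteristic), using a Taylor expansion with control on the remainder via third moments. First I would fix $x\in\calX$ and abbreviate $\mu_2:=\mu_2(x)$, $\mu_3:=\mu_3(x)$, $\varphi:=\varphi_{\nu_0}(\cdot\myp;x)$. The key elementary inequality is that for any real random variable $Y$ with $\EEE Y=0$, $\EEE Y^2<\infty$, $\EEE|Y|^3<\infty$, one has for all real $t$
\begin{equation*}
\bigl|\EEE(\rme^{\myp\rmi t Y})-1+{\textstyle\frac12}t^2\EEE Y^2\bigr|\le {\textstyle\frac16}|t|^3\EEE|Y|^3,
\end{equation*}
which follows by applying the pointwise bound $|\rme^{\myp\rmi s}-1-\rmi s+\frac12 s^2|\le\frac16|s|^3$ with $s=tY$ and taking expectations (the linear term drops since $\EEE Y=0$). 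Hence $|\varphi(t)-1|\le\frac12\mu_2 t^2+\frac16\mu_3|t|^3$.

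The second step is to pass to the logarithm. Since $\varphi(t)$ lies within the stated distance of $1$, I would like to write $|\varphi(t)|\le\exp\{\Re\log\varphi(t)\}$ and bound $\Re\log\varphi(t)$ from above. Using $\Re\log w\le |w|-1$ for $\Re w>0$ — which holds because $\log|w|=\frac12\log(1+(|w|^2-1))\le\frac12(|w|^2-1)$ and a short convexity argument, or more simply because $\log r\le r-1$ for $r>0$ applied to $r=|w|$ — together with the estimate $|\varphi(t)|-1\le|\varphi(t)-1|\le\frac12\mu_2 t^2+\frac16\mu_3|t|^3$, one would get directly
\begin{equation*}
|\varphi(t)|\le\exp\Bigl\{-{\textstyle\frac12}\mu_2 t^2+{\textstyle\frac16}\mu_3|t|^3+(\text{error})\Bigr\}.
\end{equation*}
The issue is that $|\varphi(t)|-1$ need \emph{not} be dominated by $-\frac12\mu_2 t^2+\frac16\mu_3|t|^3$; that quantity can be positive. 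The fix, and the main obstacle, is to handle the two regimes separately. When $-\frac12\mu_2 t^2+\frac16\mu_3|t|^3\ge 0$ (equivalently $|t|\ge 3\mu_2/\mu_3$), the claimed bound has right-hand side $\ge 1$ and is trivial since $|\varphi(t)|\le 1$ always. When $|t|< 3\mu_2/\mu_3$, I would instead use the sharper expansion $|\varphi(t)|^2=1-\mu_2 t^2+O(\mu_3|t|^3)$ (obtained from $|\varphi|^2=\varphi\myp\overline{\varphi}$ and the same Taylor estimate), whence $\log|\varphi(t)|=\frac12\log(1-\mu_2 t^2+O(\mu_3|t|^3))\le -\frac12\mu_2 t^2+\frac13\mu_3|t|^3$ provided the constant is absorbed correctly; here one uses $\log(1-a)\le -a$ and keeps careful track of the constant $\frac13$ versus $\frac16$, which is exactly why the statement has $\frac13$ rather than $\frac16$.

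To make the constant work cleanly I would prefer the following streamlined route, which I expect to be what the authors intend. Write $\varphi(t)=1+r(t)$ with $|r(t)|\le\frac12\mu_2 t^2+\frac16\mu_3|t|^3$ and also $\Re r(t)=\EEE(\cos(tY)-1)\le -\frac12\mu_2 t^2+\frac16\mu_3|t|^3$ from the even Taylor expansion $|\cos s-1+\frac12 s^2|\le\frac{1}{24}s^4\le\frac16|s|^3\cdot\frac{|s|}{4}$ — actually the clean bound is $|\cos s - 1 + \frac12 s^2|\le\frac16|s|^3$ directly, giving $\Re r(t)\le-\frac12\mu_2 t^2+\frac16\mu_3|t|^3$. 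Then $|\varphi(t)|^2=(1+\Re r)^2+(\Im r)^2\le 1+2\Re r+|r|^2$. Since $|r|^2\le\bigl(\frac12\mu_2 t^2+\frac16\mu_3|t|^3\bigr)^2$ and by Lyapunov $\mu_2\le\mu_3^{2/3}$ so all these terms are $\le C\mu_3|t|^3$ once $|t|$ is bounded (and for large $|t|$ the lemma is trivial as above), one concludes $|\varphi(t)|^2\le 1-\mu_2 t^2+\frac23\mu_3|t|^3$ after collecting constants, hence $|\varphi(t)|\le\exp\{-\frac12\mu_2 t^2+\frac13\mu_3|t|^3\}$ using $1+u\le\rme^u$. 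The delicate part — and the only real obstacle — is verifying that all the cross and square terms coming from $|r|^2$ and from the cubic remainders can be consolidated into precisely the coefficient $\frac13$ in the exponent for \emph{all} $t$ simultaneously (not just asymptotically), which forces the two-regime split and careful use of $\mu_2\le\mu_3^{2/3}$; I would present that bookkeeping explicitly, since it is the whole content of the lemma.
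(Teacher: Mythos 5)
The paper does not actually prove this lemma itself: it quotes the bound from \cite[Lemma~7.10]{BZ4}. So your attempt has to stand on its own, and while its skeleton (third‑order Taylor expansion of $\varphi_{\nu_0}$, triviality of the bound when the exponent is nonnegative, Lyapunov's inequality $\mu_2\le\mu_3^{2/3}$) is a viable starting point, the decisive step — getting the coefficient exactly $\tfrac13$ — is precisely where your argument stops being a proof, and as sketched it fails. Write $a:=\mu_2(x)\myp t^2$, $b:=\mu_3(x)\myp|t|^3$. First, a small slip: the trivial regime $-\tfrac12 a+\tfrac13 b\ge0$ is $|t|\ge 3\mu_2/(2\mu_3)$, not $3\mu_2/\mu_3$. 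More seriously, on the complementary regime your chain $|\varphi|^2\le 1+2\Re r+|r|^2\le 1-a+\tfrac13 b+|r|^2$ needs $|r|^2\le\tfrac13 b$, and this is not true there: by Lyapunov the non‑trivial regime allows $a$ up to $9/4$ and $b$ as small as $a^{3/2}$ (your argument uses nothing about the geometric law, so you must cover e.g.\ an almost symmetric two‑point $\nu_0$, where Lyapunov is nearly tight); taking $a\approx 2.2$, $b\approx a^{3/2}\approx3.26$ gives $|r|^2\le(\tfrac a2+\tfrac b6)^2\approx 2.7$ versus the budget $\tfrac13 b\approx1.1$. The best your estimates yield on that regime is $|r|^2\le\tfrac{9}{16}a^2\le\tfrac{27}{32}\myp b$, and "collecting constants" then produces an exponent coefficient of about $0.59$, not $\tfrac13$. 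So the consolidation you defer as bookkeeping is not bookkeeping; the window roughly $2<\mu_2 t^2<9/4$ genuinely requires a different estimate.

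Two standard ways to close it. (i) Work with $|\varphi|$ directly rather than $|\varphi|^2$: for $a\le2$ one has $|\varphi|\le\bigl(1-\tfrac a2\bigr)+\tfrac b6\le \exp\{-\tfrac a2+\tfrac b6\}$, which is stronger than needed; for $b\ge\tfrac32 a$ the claim is trivial; and in the remaining window ($2\le a<\tfrac94$, $a^{3/2}\le b<\tfrac32 a$) one has $|\varphi|\le\tfrac a2-1+\tfrac b6\le\tfrac{11}{16}$, while $\exp\{-\tfrac a2+\tfrac b3\}\ge \exp\{-\tfrac98+\tfrac{2\sqrt2}{3}\}>0.8$, so the bound holds there as well. (ii) Use the symmetrization identity $|\varphi_{\nu_0}(t)|^2=\EEE\cos\bigl(t(\nu_0-\nu_0')\bigr)$ with an independent copy $\nu_0'$, bound $\cos u\le 1-\tfrac{u^2}{2}+\tfrac{|u|^3}{6}$, and then establish a third‑moment inequality of the form $\EEE|\nu_0-\nu_0'|^3\le 4\myp\mu_3$ (the crude $c_r$ bound gives $8\mu_3$ and only the coefficient $\tfrac23$). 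Either way an additional idea beyond your outline is required, so as it stands the proposal has a genuine gap at the heart of the lemma.
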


The next lemma provides two estimates (proved in \cite[Lemmas 7.11
and 7.12]{BZ4})
%, pp.~36--38]
for the characteristic function
$\varphi_{\xi_0}(\lambda):=\EE^\gamma_{z}(\rme^{\myp\rmi\langle\lambda,\,\xi_0\rangle})$
of the centered vector $\xi_0:=\xi-a_z
=\sum_{x\in\calX}x\myp\nu_0(x)$. Recall that the Lyapunov
coefficient $L_z$ is defined in \textup{(\ref{L3})}, and
$V_z:=K_z^{-1/2}$.
\begin{lemma}\label{lm:7.2_F}
\textup{(a)} \,For all\/ $\lambda\in\RR^2$\textup{,}
\begin{equation}\label{x.f_6}
|\varphi_{\xi_0}(\lambda V_z)|\le \exp\bigl\{-{\textstyle
\frac12}\myp|\lambda|^2+{\textstyle\frac{1}{3}}\myp
L_z|\lambda|^3\bigr\}.
\end{equation}

%\begin{lemma}[see \mbox{\cite[Lemma~7.12]{BZ4}}]\label{lm:7.2}
\textup{(b)} \,If\/ $|\lambda|\le L_z^{-1}$ then
\begin{equation}\label{16L}
\left|\varphi_{\xi_0}(\lambda V_z)-\rme^{-|\lambda|^2\myn/2}\right|
\le 16\myp L_z|\lambda|^3\mypp \rme^{-|\lambda|^2/6}.
\end{equation}
\end{lemma}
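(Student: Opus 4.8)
The plan is to exploit the product structure $\varphi_{\xi_0}(\lambda V_z)=\prod_{x\in\calX}\varphi_{\nu_0}(t_x;x)$, where $t_x:=\langle x,\lambda V_z\rangle$, which follows from $\xi_0=\sum_x x\myp\nu_0(x)$ and the independence of the $\nu_0(x)$. Everything rests on two elementary identities. First, since $V_z^2=K_z^{-1}$, the symmetric matrix $V_zK_zV_z$ is the identity, so by (\ref{D_zxi})
\begin{equation}\label{eq:id2}
\sum_{x\in\calX}\mu_2(x)\,t_x^2=\lambda\myp(V_zK_zV_z)\,\lambda^{\topp}=|\lambda|^2 .
\end{equation}
Second, by the Cauchy--Schwarz inequality $|t_x|\le|x|\,|\lambda V_z|\le\|V_z\|\,|x|\,|\lambda|$, so by the definition (\ref{L3}) of $L_z$
\begin{equation}\label{eq:id3}
\sum_{x\in\calX}\mu_3(x)\,|t_x|^3\le\|V_z\|^3|\lambda|^3\sum_{x\in\calX}|x|^3\mu_3(x)=L_z\,|\lambda|^3 .
\end{equation}

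Part (a) is then immediate: insert the bound of Lemma~\ref{lm:7.2_f}, namely $|\varphi_{\nu_0}(t_x;x)|\le\exp\{-\tfrac12\mu_2(x)t_x^2+\tfrac13\mu_3(x)|t_x|^3\}$, into the product and sum the exponents using (\ref{eq:id2}) and (\ref{eq:id3}); this yields $|\varphi_{\xi_0}(\lambda V_z)|\le\exp\{-\tfrac12|\lambda|^2+\tfrac13L_z|\lambda|^3\}$. In particular, when $|\lambda|\le L_z^{-1}$ one has $\tfrac13L_z|\lambda|^3\le\tfrac13|\lambda|^2$, hence $|\varphi_{\xi_0}(\lambda V_z)|\le\rme^{-|\lambda|^2/6}$ — a fact I will reuse below.

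For part (b) I would compare each factor $a_x:=\varphi_{\nu_0}(t_x;x)$ with its Gaussian surrogate $b_x:=\rme^{-\mu_2(x)t_x^2/2}$, observing that by (\ref{eq:id2}) the surrogates multiply to $\prod_x b_x=\rme^{-|\lambda|^2/2}$. Enumerating $\calX=\{x_1,x_2,\dots\}$, the telescoping identity
\begin{equation*}
\prod_j a_{x_j}-\prod_j b_{x_j}=\sum_k\Bigl(\prod_{j<k}a_{x_j}\Bigr)(a_{x_k}-b_{x_k})\Bigl(\prod_{j>k}b_{x_j}\Bigr)
\end{equation*}
is the backbone. On the partial products I would use Lemma~\ref{lm:7.2_f} once more, $|a_{x_j}|\le\rme^{-\frac12\mu_2(x_j)t_{x_j}^2}\rme^{\frac13\mu_3(x_j)|t_{x_j}|^3}$, together with $|b_{x_j}|=\rme^{-\frac12\mu_2(x_j)t_{x_j}^2}$, estimate (\ref{eq:id3}), and $|\lambda|\le L_z^{-1}$; the off-diagonal Gaussian exponents then assemble, via (\ref{eq:id2}), into a global factor $\rme^{-|\lambda|^2/6}$ (times a leftover $\rme^{+\frac12\mu_2(x_k)t_{x_k}^2}$ at the $k$-th slot). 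For the factor differences I would use the two standard Taylor remainders, $\bigl|a_x-(1-\tfrac12\mu_2(x)t_x^2)\bigr|\le\tfrac16\mu_3(x)|t_x|^3$ (Taylor bound for the characteristic function of a centred variable) and $0\le b_x-(1-\tfrac12\mu_2(x)t_x^2)\le\tfrac18\mu_2(x)^2t_x^4$, so that $|a_{x_k}-b_{x_k}|\le\tfrac16\mu_3(x_k)|t_{x_k}|^3+\tfrac18\mu_2(x_k)^2t_{x_k}^4$. Collecting, $\bigl|\varphi_{\xi_0}(\lambda V_z)-\rme^{-|\lambda|^2/2}\bigr|\le\rme^{-|\lambda|^2/6}\sum_k|a_{x_k}-b_{x_k}|\,\rme^{\frac12\mu_2(x_k)t_{x_k}^2}$.

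The main obstacle is to show that this remaining sum is $O(L_z|\lambda|^3)$ uniformly on $\{|\lambda|\le L_z^{-1}\}$: the naive bound $\mu_2(x_k)t_{x_k}^2\le|\lambda|^2$ from (\ref{eq:id2}) makes the weights $\rme^{\frac12\mu_2(x_k)t_{x_k}^2}$ as large as $\rme^{|\lambda|^2/2}$ and likewise gives only $\sum_x\mu_2(x)^2t_x^4\le|\lambda|^4=O(1)$, which is too weak since $o(1)$ is needed. To resolve this I would establish the uniform pointwise bound $\sup_{x\in\calX}|x|^2\mu_2(x)=O(n_1^{2/3})$ — a short direct estimate of $\mu_2(x)=z^x/(1-z^x)^2$ from the explicit form (\ref{z}) of $z(x)$ with $\alpha_n\asymp n_1^{-1/3}$ and the curvature lower bound (\ref{kr}) (which keeps the exponent of $z^x$ comparable to $\alpha_n|x|$, so that $|x|^2\mu_2(x)$ is maximised at $|x|\asymp n_1^{1/3}$ with value $O(n_1^{2/3})$). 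Combined with $\|V_z\|^2=O(n_1^{-4/3})$ from Lemma~\ref{lm:K_z}, this gives $\sup_x\mu_2(x)t_x^2\le\|V_z\|^2|\lambda|^2\sup_x|x|^2\mu_2(x)=O(1)$ on $\{|\lambda|\le L_z^{-1}\}$, so the weights are $O(1)$, and $\sum_x\mu_2(x)^2t_x^4\le\|V_z\|^2|\lambda|^4\sup_x|x|^2\mu_2(x)=O(L_z^2)|\lambda|^4=O(L_z|\lambda|^3)$ (using $L_z|\lambda|\le1$), while the cubic sum is $O(L_z|\lambda|^3)$ directly by (\ref{eq:id3}). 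Hence $\bigl|\varphi_{\xi_0}(\lambda V_z)-\rme^{-|\lambda|^2/2}\bigr|=O(L_z|\lambda|^3)\,\rme^{-|\lambda|^2/6}$, and a careful accounting of the elementary constants delivers the factor $16$ in (\ref{16L}).
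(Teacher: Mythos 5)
Your part (a) is correct and is the standard argument (the one behind the paper's citation to \cite{BZ4}): factor $\varphi_{\xi_0}(\lambda V_z)=\prod_x\varphi_{\nu_0}(t_x;x)$ with $t_x=\langle x,\lambda V_z\rangle$, apply Lemma \ref{lm:7.2_f} to each factor, and sum the exponents using $\sum_x\mu_2(x)t_x^2=\lambda V_zK_zV_z\lambda^{\topp}=|\lambda|^2$ and $\sum_x\mu_3(x)|t_x|^3\le\|V_z\|^3|\lambda|^3\sum_x|x|^3\mu_3(x)=L_z|\lambda|^3$. No complaints there.

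For part (b) the telescoping scheme is sound in outline, and your auxiliary estimate $\sup_x|x|^2\mu_2(x)=O(n_1^{2/3})$ is indeed provable from (\ref{z}) and (\ref{ogd}); but the way you close the argument does not prove the statement as written. Inequality (\ref{16L}) is a non-asymptotic bound with the absolute constant $16$, valid whenever $|\lambda|\le L_z^{-1}$ (it is a general Berry--Esseen-type lemma about sums of independent variables with third moments, which is how it is established in the cited source); your control of the weights $\rme^{\frac12\mu_2(x_k)t_{x_k}^2}$ and of $\sum_x\mu_2(x)^2t_x^4$ runs through $\|V_z\|\asymp n_1^{-2/3}$, $L_z\asymp n_1^{-1/3}$ and the new sup-bound, i.e.\ through implied constants depending on $\gamma$ and valid only for $n$ large. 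What this yields is $C(\gamma)\,L_z|\lambda|^3\rme^{-|\lambda|^2/6}$ for $n\ge n_0(\gamma)$ --- enough for the use made of the lemma in Theorem \ref{llt}, but not the lemma itself --- and the closing claim that ``a careful accounting of the elementary constants delivers the factor $16$'' is unsubstantiated: your bounds cannot produce any absolute constant. The missing idea that makes everything absolute and model-free is Lyapunov's inequality $\mu_2(x)\le\mu_3(x)^{2/3}$ (cf.\ Lemma \ref{lm:mu<m}) combined with a split of the range. If $L_z|\lambda|^3\le\tfrac18$, then each $\mu_2(x)t_x^2\le(\mu_3(x)|t_x|^3)^{2/3}\le(L_z|\lambda|^3)^{2/3}\le\tfrac14$, so your weights are at most $\rme^{1/8}$, and moreover $\sum_x\mu_2(x)^2t_x^4\le(L_z|\lambda|^3)^{1/3}\sum_x\mu_3(x)|t_x|^3\le\tfrac12\,L_z|\lambda|^3$, so the telescoping bound closes with explicit numerical constants. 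If instead $L_z|\lambda|^3>\tfrac18$ (still with $|\lambda|\le L_z^{-1}$), the bound is trivial from part (a): both $|\varphi_{\xi_0}(\lambda V_z)|$ and $\rme^{-|\lambda|^2/2}$ are at most $\rme^{-|\lambda|^2/6}$, while $2\le16\,L_z|\lambda|^3$. With this case split (and a routine truncation argument to justify the telescoping over the infinite set $\calX$) one obtains (\ref{16L}) with the stated constant, uniformly in $n$; as written, your proof leaves a genuine gap at exactly this point.
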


The next global bound is obtained by repeating the proof of
Lemma~7.14 in \cite{BZ4}.
%, p.~38]
\begin{lemma}\label{lm:7.3}
For all\/ $\lambda\in\RR^2$\textup{,}
\begin{equation}\label{f_J}
|\varphi_{\xi_0}(\lambda)|\le \rme^{-J_{n}(\lambda)},
%\exp\{-C_0 J_{n}(\lambda)\},\qquad
%\lambda\in\RR^2,
\end{equation}
where
%$C_0$ is a positive constant and
\begin{equation}\label{J_0}
J_{n}(\lambda):=\frac14\sum_{x\in \calX}
\rme^{-\alpha_n\langle\delta,\myp
x\rangle}\bigl(1-\cos\langle\lambda,x\rangle\bigr)\ge0.
%,\qquad \lambda\in\RR^2.
\end{equation}
\end{lemma}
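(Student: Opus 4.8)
\textbf{Proof plan for Lemma \ref{lm:7.3}.}
The plan is to use the product formula \eqref{f_xi} for the characteristic function, pass to moduli, and then bound each factor by a Gaussian-type exponential. From \eqref{f_xi} applied to the centered vector $\xi_0$, we have the factorization
\begin{equation*}
\varphi_{\xi_0}(\lambda)=\prod_{x\in\calX}\varphi_{\nu_0}(\langle x,\lambda\rangle;x),
\end{equation*}
so that $|\varphi_{\xi_0}(\lambda)|=\prod_{x\in\calX}|\varphi_{\nu_0}(\langle x,\lambda\rangle;x)|$. Since $\nu(x)$ has geometric distribution with parameter $z^x$, its (uncentered) characteristic function is given by \eqref{f_nu}, and a direct calculation of $|\varphi_\nu(t;x)|^2$ gives
\begin{equation*}
|\varphi_{\nu_0}(t;x)|^2=|\varphi_\nu(t;x)|^2=\frac{(1-z^x)^2}{1-2z^x\cos t+z^{2x}}
=\frac{(1-z^x)^2}{(1-z^x)^2+2z^x(1-\cos t)}.
\end{equation*}
Hence $|\varphi_{\nu_0}(t;x)|^2=\bigl(1+2z^x(1-z^x)^{-2}(1-\cos t)\bigr)^{-1}$, and using the elementary inequality $(1+a)^{-1}\le\rme^{-a/2}$ valid for $0\le a\le 2$ — or, more robustly, $(1+a)^{-1}\le\rme^{-a/(1+a)}$ for all $a\ge0$ — we can extract an exponential bound in terms of $1-\cos t$.

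The key step is then to get a \emph{uniform} lower bound on the prefactor $z^x(1-z^x)^{-2}$ that is robust across all $x\in\calX$. Since $z_j(x)=\rme^{-\alpha_n\delta_j(\tau(\tilde x))}$ with $\delta_j\ge\delta_{*}>0$ by \eqref{ogd}, we have $z^x=\rme^{-\alpha_n\langle\tilde x,\delta(\tau(\tilde x))\rangle}\le\rme^{-\const\cdot\alpha_n\langle x,\bfI\rangle}$, which stays bounded away from $1$ whenever $\langle x,\bfI\rangle\ge1$; and for these $x$ we also have $z^x\ge\rme^{-\const\cdot\alpha_n\langle x,\bfI\rangle}$, so that $z^x(1-z^x)^{-2}\ge \rme^{-\const\cdot\alpha_n\langle x,\bfI\rangle}$ up to a fixed constant. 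Matching constants (and recalling that $x=0$ is excluded from $\calX$ in the edge encoding, or contributes a trivial factor $1$), one arrives at a bound of the form $z^x(1-z^x)^{-2}\ge \tfrac14\rme^{-\alpha_n\langle\delta,x\rangle}$ with $\delta$ understood componentwise (after absorbing $\rho_n\to1$ and the constants into the definition of $J_n$). Multiplying the per-$x$ bounds gives
\begin{equation*}
|\varphi_{\xi_0}(\lambda)|\le\prod_{x\in\calX}\exp\Bigl\{-\tfrac14\rme^{-\alpha_n\langle\delta,x\rangle}(1-\cos\langle\lambda,x\rangle)\Bigr\}
=\rme^{-J_n(\lambda)},
\end{equation*}
which is exactly \eqref{f_J} with $J_n(\lambda)$ as in \eqref{J_0}; nonnegativity of $J_n$ is immediate since $1-\cos\le\cdot$ and each exponential is positive.

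The main obstacle will be bookkeeping the constants so that the clean coefficient $\tfrac14$ and the exponent $-\alpha_n\langle\delta,x\rangle$ in \eqref{J_0} come out exactly right, rather than merely up to constants — this requires carefully tracking how the substitution $\tilde x=(x_1,\rho_n x_2)$ and the exact form \eqref{eq:delta12} of $\delta_j$ interact with the crude bound $\delta_j\ge\delta_*$, and choosing whether to bound $z^x$ via the sharp exponent $\langle\tilde x,\delta(\tau(\tilde x))\rangle$ or the crude one $\delta_*\langle x,\bfI\rangle$. A secondary point is the behaviour of $z^x(1-z^x)^{-2}$ when $z^x$ is close to $1$: since $\alpha_n\to0$, the points $x$ with $\langle x,\bfI\rangle$ of order $1$ do have $z^x\to1$, so one must check that even there the quantity $z^x(1-z^x)^{-2}\gtrsim \alpha_n^{-2}$ dominates $\rme^{-\alpha_n\langle\delta,x\rangle}\approx1$, which it does for large $n$ — alternatively, restrict the product to a range of $x$ where the bound is transparent and note the omitted factors are $\le1$. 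Since the same computation is carried out in \cite[Lemma~7.14]{BZ4}, the adaptation here is routine once the parameter functions $z_j(x)$ are substituted.
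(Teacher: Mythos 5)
Your overall route --- factorize $\varphi_{\xi_0}$ via (\ref{f_xi}), compute
$|\varphi_{\nu_0}(t;x)|^2=(1-z^x)^2\bigl((1-z^x)^2+2z^x(1-\cos t)\bigr)^{-1}$ exactly, bound each factor by an exponential and multiply --- is the same as the paper's (which simply repeats Lemma~7.14 of \cite{BZ4}), and your opening computation is correct. But the step you call ``key'' is miscast, and as written it leaves a real gap. The quantity $a:=2z^x(1-z^x)^{-2}(1-\cos t)$ is \emph{not} bounded by $2$: for the lattice points with $|x|$ of order $1$ one has $1-z^x\asymp\alpha_n$, so $a$ can be of order $\alpha_n^{-2}\to\infty$; hence your first inequality $(1+a)^{-1}\le\rme^{-a/2}$ is inapplicable exactly where it matters. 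Moreover, a lower bound on the prefactor $z^x(1-z^x)^{-2}$ cannot repair this: such a bound is trivially true (the prefactor is $\ge z^x$ because $(1-z^x)^{-2}\ge1$) and by itself yields no exponential estimate, and the ``$\alpha_n^{-2}$ domination'' discussion is beside the point. What is actually needed is an \emph{upper} bound on the denominator. Using your robust inequality $(1+a)^{-1}\le\rme^{-a/(1+a)}$ together with $1-\cos t\le 2$,
\begin{equation*}
\frac{a}{1+a}=\frac{2z^x(1-\cos t)}{(1-z^x)^2+2z^x(1-\cos t)}
\ \ge\ \frac{2z^x(1-\cos t)}{(1+z^x)^2}\ \ge\ \frac{z^x(1-\cos t)}{2}\,,
\end{equation*}
since $(1-z^x)^2+2z^x(1-\cos t)\le(1-z^x)^2+4z^x=(1+z^x)^2\le4$. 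Taking square roots gives $|\varphi_{\nu_0}(t;x)|\le\exp\bigl\{-\tfrac14\myp z^x(1-\cos t)\bigr\}$ for every $x\in\calX$, with the constant $\tfrac14$ exact and with no constant-matching at all: the exponent $\rme^{-\alpha_n\langle\delta,\myp x\rangle}$ in (\ref{J_0}) is just shorthand for $z^x=\rme^{-\alpha_n\langle\tilde x,\myp\delta(\tau(\tilde x))\rangle}$ from (\ref{z}), so neither $\delta_{*}$ nor $\rho_n$ enters. Multiplying over $x\in\calX$ (all factors lie in $(0,1]$) yields (\ref{f_J}), and $J_n(\lambda)\ge0$ is immediate. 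With this estimate replacing your ``uniform lower bound on $z^x(1-z^x)^{-2}$'' step, the argument is complete and coincides with the paper's.
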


We can now proceed to the proof of Theorem \ref{llt}.

\begin{proof}[Proof of Theorem \textup{\ref{llt}}]
By the Fourier inversion formula, we can write
\begin{equation}\label{l_1}
\QQ^\gamma_{z}\{\xi=m\}=\frac{1}{4\pi^2}\int_{T^2}
\rme^{-\rmi\langle \lambda,\mypp{}m-a_z\myn\rangle}\mypp
\varphi_{\xi_0}(\lambda)\,\dif{}\lambda,\qquad m\in\ZZ_+^2\myp,
\end{equation}
%\begin{equation}\label{l_1}
%\begin{aligned} \QQ_z\{\xi=m\}&=\frac{1}{(2\pi)^2}\int_{T}
%\rme^{-\rmi\langle \lambda,m\rangle}
%f_{\xi}(\lambda)\,\dif\lambda=\frac{1}{(2\pi)^2}\int_{T}
%\rme^{-\rmi\langle \lambda,m-a_z\rangle}
%f_{\xi}^*(\lambda)\,\dif\lambda,
%\end{aligned}
%\end{equation}
where
$T^2:=\{\lambda=(\lambda_1,\lambda_2)\in\RR^2:|\lambda_1|\le\pi,\,
|\lambda_2|\le\pi\}$. On the other hand, the characteristic function
corresponding to the normal probability density $f_{a_z\myn,K_z}(x)$
(see (\ref{eq:phi1})) is given by
\begin{equation*}
\varphi_{a_z\myn,K_z}(\lambda)=
\rme^{\myp\rmi\langle\lambda,\myp{}a_z\myn\rangle-|\lambda
V_z^{-1}\myn|{\vphantom{(_z}}^2\myn/2},\qquad \lambda\in\RR^2,
\end{equation*}
so by the Fourier inversion formula
\begin{equation}\label{f_o}
f_{a_z\myn,K_z}(m)= \frac{1}{4\pi^2}
\int_{\RR^2}\rme^{-\rmi\langle\lambda,\mypp{}m-a_z\myn\rangle-|\lambda
V_z^{-1}\myn|{\vphantom{(_z}}^2\myn/2}\,\dif{}\lambda\myp,\qquad
m\in\ZZ^2_+\myp.
\end{equation}

Note that if $|\lambda V_z^{-1}\myn|\le L_z^{-1}$ then, according to
Lemmas \ref{lm:K_z} and \ref{lm:7.1},
\begin{equation*}
%\label{VL}
|\lambda|\le |\lambda V_z^{-1}\myn|\cdot\|V_z\| \le L_z^{-1}
\|V_z\|=O\bigl(n_1^{-1/3}\bigr)=o(1),
\end{equation*}
which of course implies that $\lambda\in T^2$. Using this
observation and subtracting (\ref{f_o}) from (\ref{l_1}), we get,
uniformly in $m\in\ZZ^2_+$\myp,
\begin{equation}\label{I}
\bigl|\QQ^\gamma_{z}\{\xi=m\}-f_{a_z\myn,K_z}(m)\bigr|\le
I_1+I_2+I_3\myp,
\end{equation}
where
\begin{align*}
&I_1:=\frac{1}{4\pi^2}\int_{\{\lambda\,:\,|\lambda V_z^{-1}\myn|
      \le L_z^{-1}\}}
      \bigl|\varphi_{\xi_0}(\lambda)-\rme^{-|\lambda V_z^{-1}\myn|{\vphantom{(_z}}^2\myn/2}
      \bigr|\,\dif{}\lambda\myp,\\
&I_2:=\frac{1}{4\pi^2}\int_{\{\lambda\,:\,|\lambda V_z^{-1}\myn|>
      L_z^{-1}\}}
      \rme^{-|\lambda V_z^{-1}\myn|{\vphantom{(_z}}^2\myn/2}\,\dif{}\lambda\myp,\\
&I_3:=\frac{1}{4\pi^2}
      \int_{T^2\cap\{\lambda\,:\,|\lambda V_z^{-1}\myn|>L_z^{-1}\}}
      |\varphi_{\xi_0}(\lambda)|\:\dif{}\lambda\myp.
\end{align*}

By the substitution $\lambda=y\myp V_z$, the integral $I_1$ is
reduced to
\begin{align}
\notag
I_1&=\frac{|\mynn\det V_z|}{4\pi^2} \int_{|y|\le L_z^{-1}}
\bigl|\varphi_{\xi_0}(y V_z)-
\rme^{-|y|^2\myn/2}\bigr|\,\dif{}y\\
&=O(1)\mypp(\det K_z)^{-1/2}\myp L_z\int_{\RR^2}
|y|^3\rme^{-|y|^2\myn/6}\,\dif{}y= O(n_1^{-5/3}),
\label{I1}
\end{align}
on account of Lemmas \ref{lm:detK_z},
\ref{lm:7.1} and \ref{lm:7.2_F}(b). Similarly, again putting
$\lambda=y\myp V_z$ and passing to the polar coordinates, we get,
due to Lemmas \ref{lm:detK_z} and \ref{lm:7.1},
\begin{equation}\label{I2}
\begin{aligned}
I_2&= \frac{|\mynn\det V_z|}{2\pi} \int_{L_z^{-1}}^\infty r\mypp
\rme^{-r^2\myn/2}\,\dif{r}=O(n_1^{-4/3})\,
\rme^{-L_z^{-2}\myn/2}=o(n_1^{-5/3}).
\end{aligned}
\end{equation}

Finally, let us turn to $I_3$. Using Lemma \ref{lm:7.3}, we obtain
\begin{align}\label{I3}
I_3&= O(1)\int_{T^2\cap \{|\lambda V_z^{-1}\myn|>L_z^{-1}\}}
\rme^{-J_{n}(\lambda)}\,\dif{}\lambda\myp,
\end{align}
where $J_{n}(\lambda)$ is given by (\ref{J_0}). The condition
$|\lambda V_z^{-1}\myn|>L_z^{-1}$ implies that
$|\lambda|>\sqrt{2}\,\eta\myp\alpha_n$ and hence
$\max\{|\lambda_1|,|\lambda_2|\}>\eta\myp\alpha_n$, where $\eta>0$
is suitable (small enough) constant. Indeed, assuming the contrary,
from (\ref{eq:alpha}) and Lemmas \ref{lm:K_z} and \ref{lm:7.1} it
would follow
\begin{align*}
1<L_z|\lambda V_z^{-1}\myn|&\le L_z\myp\eta\myp\alpha_n
%\,{\cdot}\,
\|K_z\|^{1/2}= O(\eta)\to0\quad\text{as}\ \ \eta\downarrow0,
\end{align*}
which is a contradiction. Hence, estimate (\ref{I3}) is reduced to
\begin{align}\label{I3.1}
I_3&=O(1) \left(
\int_{|\lambda_1|>\eta\myp\alpha_n}+\int_{|\lambda_2|>\eta\myp\alpha_n}\right)
\rme^{-J_{n}(\lambda)}\,\dif{}\lambda.
\end{align}

Note that, by Assumption \ref{as2} and formulas (\ref{eq:delta12}),
the functions $\delta_1(t)$, $\delta_2(t)$ are bounded above,
$\sup_t\delta_j(t)\le \delta^{*\mynn}<\infty$. Hence, (\ref{J_0})
implies
\begin{equation}\label{J1}
J_{n}(\lambda) \ge\sum_{x\in X} e^{-\alpha_n\delta^{*\myn}(x_1+x_2)}
\bigl(1-\cos\langle\lambda,x\rangle\bigr).
\end{equation}

To estimate the first integral in (\ref{I3.1}), by keeping in
summation (\ref{J1}) only pairs of the form $x=(x_1,1)$,
\,$x_1\in\ZZ_+$\myp, we obtain
\begin{align}
\notag J_{n}(\lambda)\,\rme^{\alpha_n\delta^*} \ge
\sum_{x_1=0}^\infty \rme^{-\alpha_n\delta^* x_1}\!\left(1-\Re\,
\rme^{\myp\rmi (\lambda_1 x_1+\lambda_2)}\right)
&=\frac{1}{1-\rme^{-\alpha_n}}-
\Re\left(\frac{\rme^{\myp\rmi\lambda_2}}{1-\rme^{- \alpha_n+\rmi\lambda_1}}\right)\\
\label{J_1}& \ge
\frac{1}{1-\rme^{-\alpha_n}}-\frac{1}{|1-\rme^{-\alpha_n +
\rmi\lambda_1}|}\myp,
\end{align}
because $\Re\myp u\le |u|$ for any $u\in\CC$. Since
$\eta\mypp\alpha_n\le|\lambda_1|\le \pi$, we have
\begin{align*}
|1-\rme^{-\alpha_n+\rmi\lambda_1}|
&\ge|1-\rme^{-\alpha_n+\rmi\myp\eta\myp\alpha_n}|\sim\alpha_n
(1+\eta^2)^{1/2} \qquad (\alpha_n\to0).
\end{align*}
Substituting this estimate into (\ref{J_1}), we conclude that
$J_n(\lambda)$ is asymptotically bounded from below by
$C(\eta)\mypp\alpha_n^{-1}\mynn \asymp n_1^{1/3}$ (with some
constant $C(\eta)>0$), uniformly in $\lambda$ such that
$\eta\mypp\alpha_n\le|\lambda_1|\le \pi$. Thus, the first integral
in (\ref{I3.1}) is bounded by
%\begin{equation*}
$O(1)\exp\bigl(-\myp\const \cdot n_1^{1/3}\bigr)=o(n_1^{-5/3})$.
%\end{equation*}

Similarly, the second integral in (\ref{I3.1}) is estimated by
reducing the summation in (\ref{J_0}) to that over $x=(1,x_2)$ only.
As a result, $I_3=o(n_1^{-5/3})$. Substituting this estimate,
together with (\ref{I1}) and (\ref{I2}), into (\ref{I}) we get
(\ref{eq:LCLT}), and so the theorem is proved.
\end{proof}

\begin{corollary}\label{cor:Q}
In addition to the conditions of Theorem
\textup{\ref{llt}}\textup{,} suppose that Assumption
\textup{\ref{as3}} holds. Then\textup{,} as $n\to\infty$,
\begin{equation}\label{sim}
\QQ_z^\gamma\{\xi=n\}\asymp n_1^{-4/3}.
\end{equation}
\end{corollary}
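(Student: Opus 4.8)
The plan is to deduce Corollary~\ref{cor:Q} directly from the local limit theorem (Theorem~\ref{llt}) applied at the specific point $m=n$, combined with the moment asymptotics collected in Section~\ref{sec6} and the refined first-order estimate of Section~\ref{sec5}. The key point is to show that the Gaussian density $f_{a_z,K_z}(n)$ is of order $n_1^{-4/3}$, which then dominates the error term $O(n_1^{-5/3})$ in \eqref{eq:LCLT}, yielding \eqref{sim}.

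First I would recall the explicit form \eqref{eq:phi1}:
\begin{equation*}
f_{a_z,K_z}(n)=(\det K_z)^{-1/2}\,f_{0,I}\bigl((n-a_z)\,V_z\bigr)
=\frac{1}{2\pi\sqrt{\det K_z}}\,\exp\Bigl\{-\tfrac12\,\bigl|(n-a_z)\,V_z\bigr|^2\Bigr\}.
\end{equation*}
By Lemma~\ref{lm:detK_z} we have $\det K_z\asymp n_1^{8/3}$, so the prefactor $(\det K_z)^{-1/2}$ is of order $n_1^{-4/3}$. Hence it remains to show that the exponential factor is bounded above and below by positive constants, i.e.\ that $\bigl|(n-a_z)\,V_z\bigr|^2$ stays bounded as $n\to\infty$. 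Here is where Assumption~\ref{as3} enters: by Theorem~\ref{th:5.4} we have $a_z-n=\EE_z^\gamma(\xi)-n=O(n_1^{2/3})$ componentwise, so $|n-a_z|=O(n_1^{2/3})$; on the other hand, by Lemma~\ref{lm:K_z} the norm $\|V_z\|\asymp n_1^{-2/3}$. Therefore
\begin{equation*}
\bigl|(n-a_z)\,V_z\bigr|\le |n-a_z|\cdot\|V_z\|=O(n_1^{2/3})\cdot O(n_1^{-2/3})=O(1),
\end{equation*}
so the exponential is bounded away from $0$ (it is of course also bounded above by $1$). Consequently $f_{a_z,K_z}(n)\asymp n_1^{-4/3}$.

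Finally I would combine this with Theorem~\ref{llt} at $m=n$: since the error term $O(n_1^{-5/3})$ is of strictly smaller order than $n_1^{-4/3}$, we conclude
\begin{equation*}
\QQ_z^\gamma\{\xi=n\}=f_{a_z,K_z}(n)+O(n_1^{-5/3})\asymp n_1^{-4/3},
\end{equation*}
which is \eqref{sim}.

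The only mildly delicate point — and the one I would flag as the ``main obstacle'' — is the lower bound on the exponential factor, i.e.\ ensuring $\bigl|(n-a_z)\,V_z\bigr|=O(1)$ rather than merely $o(n_1^{2/3})$ or so. This is precisely why Assumption~\ref{as3} ($g_\gamma\in C^3$) is invoked in the statement of the corollary: without the sharp bound $\EE_z^\gamma(\xi_j)-n_j=O(n_1^{2/3})$ from Theorem~\ref{th:5.4}, one could only control $|n-a_z|$ crudely and the Gaussian factor might decay. Everything else is a direct substitution of the already-established asymptotics for $\det K_z$, $\|V_z\|$ and the LLT error.
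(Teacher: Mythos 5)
Your argument is correct and is essentially the paper's own proof: both use Theorem~\ref{th:5.4} to get $|n-a_z|=O(n_1^{2/3})$, Lemma~\ref{lm:K_z} to bound $|(n-a_z)V_z|=O(1)$, Lemma~\ref{lm:detK_z} to get $(\det K_z)^{-1/2}\asymp n_1^{-4/3}$, and then Theorem~\ref{llt} at $m=n$ with the $O(n_1^{-5/3})$ error being of lower order. No changes needed.
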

\begin{proof}
By Theorem~\ref{th:5.4}, $a_z=\EE_z^\gamma(\xi)=n+O(n_1^{2/3})$.
Together with Lemma \ref{lm:K_z} this implies
%\begin{align*}
$|(n-a_z)\myp V_z|\le |n-a_z|\,{\cdot}\, \|V_z\| =O(1)$.
%\end{align*}
Hence, by Lemma \ref{lm:detK_z} we get
\begin{align*}
f_{a_z\myn,K_z}(n)&=\frac{1}{2\pi}\,(\det K_z)^{-1/2}
\,\rme^{-|(n-a_z\myn)V_z|^2\myn/2}\asymp n_1^{-4/3},
\end{align*}
and (\ref{sim}) now readily follows from (\ref{eq:LCLT}).
\end{proof}

\section{Limit shape}\label{sec8}
Throughout this section we work under Assumptions
\textup{\ref{as2}}\textup{,} \textup{\ref{as:z}} and
\textup{\ref{as3}}. Let us first establish that a given curve
$\gamma\in\mathfrak{G}$ is indeed the limit shape of polygonal lines
$\varGamma\in\CP$ with respect to the measure $\QQ_z^{\gamma}$
(under the scaling $\varGamma\mapsto n_1^{-1}\varGamma$).

\begin{theorem}\label{zbc1}
For any $\varepsilon>0$\textup{,}
$$
\lim_{n\to\infty} \QQ_z^{\gamma}\left\{\varGamma\in\CP:
\,d_{\mathcal{L}}(n_1^{-1}\varGamma,\gamma)\le\varepsilon\right\}=1.
$$
\end{theorem}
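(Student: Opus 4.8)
The plan is to derive the limit-shape statement from the uniform first-moment convergence (Theorem~\ref{rs}) together with a variance bound that forces the random length functions $\ell_\varGamma(t)$ to concentrate around their means $\EE_z^\gamma[\ell_\varGamma(t)]$, using the $d_{\mathcal L}$-metric, which is precisely the sup-distance between arc-length functions $t\mapsto\ell_\gamma(t)$. Thus, unravelling the definition \eqref{eq:rho}, the event in question is $\{\sup_{0\le t\le\infty}|n_1^{-1}\ell_\varGamma(t)-\ell_\gamma(t)|\le\varepsilon\}$, and by Theorem~\ref{rs} it suffices to control $\sup_t|n_1^{-1}\ell_\varGamma(t)-n_1^{-1}\EE_z^\gamma[\ell_\varGamma(t)]|$.

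First I would establish pointwise concentration: for fixed $t$, $\ell_\varGamma(t)=\sum_{x\in\calX(t)}|x|\,\nu(x)$ is a sum of independent terms, and by \eqref{eq:nu} its variance is $\sum_{x\in\calX(t)}|x|^2\Var[\nu(x)]\le\sum_{x\in\calX}|x|^2\mu_2(x)=O(n_1^{4/3})$ by Lemma~\ref{lm:muk} (with $q=2$). Hence by Chebyshev's inequality, $\QQ_z^\gamma\{|n_1^{-1}\ell_\varGamma(t)-n_1^{-1}\EE_z^\gamma[\ell_\varGamma(t)]|>\varepsilon\}=O(n_1^{4/3}/(n_1^2\varepsilon^2))=O(n_1^{-2/3})\to0$. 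Combined with Theorem~\ref{rs}, this gives $n_1^{-1}\ell_\varGamma(t)\to\ell_\gamma(t)$ in $\QQ_z^\gamma$-probability, for each fixed $t$.

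The main obstacle is upgrading this pointwise statement to the uniform (in $t$) statement required by $d_{\mathcal L}$. Here I would exploit monotonicity: both $t\mapsto n_1^{-1}\ell_\varGamma(t)$ and $t\mapsto\ell_\gamma(t)$ are non-decreasing, and the limit $\ell_\gamma$ is continuous (indeed bounded, with $\ell_\gamma(\infty)=1+c_\gamma$ finite by the rectifiability estimate in Section~\ref{sec2}). This is exactly the setting of a Glivenko--Cantelli / Pólya-type argument: choose a finite mesh $0=t^{(0)}<t^{(1)}<\dots<t^{(N)}=\infty$ such that $\ell_\gamma$ increases by at most $\varepsilon/3$ across each subinterval (possible since $\ell_\gamma$ is continuous and bounded, after first using Theorem~\ref{rs}'s tail estimate \eqref{e} to dispose of large $t$); apply the pointwise concentration above at each of the finitely many points $t^{(i)}$; then for arbitrary $t\in[t^{(i-1)},t^{(i)}]$ sandwich $n_1^{-1}\ell_\varGamma(t)$ between $n_1^{-1}\ell_\varGamma(t^{(i-1)})$ and $n_1^{-1}\ell_\varGamma(t^{(i)})$ and compare with $\ell_\gamma(t)$, picking up at most an extra $\varepsilon/3$ from each mesh gap. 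A union bound over the $N=N(\varepsilon)$ mesh points keeps the total failure probability at $O(N n_1^{-2/3})\to0$.

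Putting the pieces together: with $\QQ_z^\gamma$-probability tending to $1$, $\sup_{0\le t\le\infty}|n_1^{-1}\ell_\varGamma(t)-\ell_\gamma(t)|\le\varepsilon$, i.e.\ $d_{\mathcal L}(n_1^{-1}\varGamma,\gamma)\le\varepsilon$, which is the claim. I expect the only delicate points to be the bookkeeping of the three $\varepsilon/3$ contributions and the reduction to a finite interval via \eqref{e}; everything else is a direct consequence of the moment asymptotics already proved (Lemma~\ref{lm:muk}, Theorem~\ref{rs}) and the monotone-convergence (Pólya) device already recorded as Lemma~\ref{lm:8.1}, which in fact can be invoked almost verbatim once the pointwise convergence in probability is in hand.
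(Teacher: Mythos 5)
Your proposal is correct, but it handles the crucial uniformity-in-$t$ step by a different device than the paper. You and the paper agree on the skeleton: by definition (\ref{eq:rho}) the event is $\{\sup_{0\le t\le\infty}|n_1^{-1}\ell_\varGamma(t)-\ell_\gamma(t)|\le\varepsilon\}$, Theorem \ref{rs} removes the deterministic part, and the variance bound $\sum_{x\in\calX}|x|^2\Var[\nu(x)]=O(n_1^{4/3})$ (your Lemma \ref{lm:muk} with $q=2$; the paper reads it off Theorem \ref{th:2} via $\Var(\xi_1)+\Var(\xi_2)$) gives pointwise concentration at rate $O(n_1^{-2/3})$. Where you diverge is the supremum: you exploit monotonicity of $t\mapsto\ell_\varGamma(t)$ and continuity of $\ell_\gamma$ through a finite $\varepsilon$-mesh (including $t=\infty$, which is legitimate since $t_1=g'_\gamma(1)<\infty$ under Assumption \ref{as2}, so $\ell_\gamma$ is constant beyond $t_1$ and the mesh is finite), a sandwich on each subinterval, and a union bound costing a fixed factor $N(\varepsilon)$ — a stochastic Glivenko--Cantelli/P\'olya argument, essentially a probabilistic analogue of Lemma \ref{lm:8.1} rather than a verbatim invocation of it, since that lemma is deterministic. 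The paper instead observes that $\ell^{\myp0}_\varGamma(t)=\ell_\varGamma(t)-\EE_z^\gamma[\ell_\varGamma(t)]$ is a zero-mean c\`adl\`ag martingale in $t$ (independent increments) and applies the Kolmogorov--Doob submartingale inequality, so a single maximal inequality bounds the supremum by $\Var(\ell_\varGamma)/(n_1\varepsilon)^2=O(n_1^{-2/3})$ with no mesh and no $\varepsilon$-dependent constant. Your route is more elementary (no martingale or path-regularity considerations) at the price of the mesh bookkeeping; the paper's is shorter and gives the cleaner uniform rate, and its sixth-moment variant (used in Theorem \ref{zbc2}) is also easier to run through Doob's inequality than through a mesh. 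Either way the conclusion and the rate $O(n_1^{-2/3})\to0$ are the same, so your argument is sound.
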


\begin{proof}
In view of Theorem \ref{rs}, we only need to check that for each
$\varepsilon>0$
\begin{align}\label{>e}
\lim_{n\to\infty}\QQ_z^{\gamma}\left\{\frac{1}{n_1}\sup_{0\le
t\le\infty}\bigl|
\ell_{\varGamma}(t)-\EE_z^{\gamma}[\ell_{\varGamma}(t)]\bigr|>
\varepsilon\right\}=0.
\end{align}
Note that the random process
\begin{equation}\label{eq:tilde-l}
\ell^{\myp0}_{\varGamma}(t):=\ell_{\varGamma}(t)-\EE_z^\gamma[\ell_{\varGamma}(t)]\qquad(0\le
t\le\infty)
\end{equation}
has independent increments and zero mean, hence it is a martingale
with respect to the filtration ${\calF}_t:=\sigma\{\nu(x),\, x\in
\mathcal{X}(t), \, t\in[0,\infty]\}$. From the definition of
$\ell_\varGamma(t)$ (see (\ref{ell_Gamma})), it is also clear that
$\ell^{\myp0}_{\varGamma}(t)$ is c\`adl\`ag (i.e, its paths are
everywhere right-continuous and have left limits). Therefore,
Kolmogorov--Doob's submartingale inequality (see, e.g.,
\cite[Corollary 2.1]{Yeh})
% 1995,
%Ch.~2, \S~2.2,
%Corollary 2.1)
gives
\begin{equation}\label{eq:KD+2}
\QQ_z^{\gamma}\left\{\sup_{0\le t\le\infty}
|\ell^{\myp0}_{\varGamma}(t)|> n_1\varepsilon\right\}\le
\frac{1}{(n_1\varepsilon)^2} \sup_{0\le t\le\infty}
\Var[\ell_{\varGamma}(t)]\le \frac{1}{n_1^2\myp\varepsilon^2}
\Var(\ell_{\varGamma}).
\end{equation}
Furthermore, using decomposition (\ref{ell_Gamma}) and Theorem
\ref{th:2}, we have
\begin{align}
\notag
\Var(\ell_{\varGamma})=\sum_{x\in\calX}|x|\mypp\Var[\nu(x)]&\le
\sum_{x\in\calX}(x_1+x_2)\mypp\Var[\nu(x)]\\
\label{eq:<n^4/3}&=\Var(\xi_1)+\Var(\xi_2)=O(n_1^{4/3}).
\end{align}
Finally, substituting (\ref{eq:<n^4/3}) into (\ref{eq:KD+2}), we see
that the probability on the left-hand side is bounded by
$O(n_1^{-2/3})\to0$, which proves (\ref{>e}).
\end{proof}

Let us now prove a limit shape result under the measure
$\PP_n^{\gamma}$ (cf.\ Theorem \ref{th:main1}).

\begin{theorem}\label{zbc2}
For any\/ $\varepsilon>0$
\begin{align}\label{<e}
\lim_{n\to\infty}\PP_{n}^{\gamma}
\left\{\varGamma\in\CP_n:,d_{\mathcal{L}}(n_1^{-1}
\varGamma,\gamma)\le\varepsilon\right\}=1.
\end{align}
\end{theorem}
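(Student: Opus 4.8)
The plan is to deduce the limit shape result under $\PP_n^\gamma$ from the already-established result under $\QQ_z^\gamma$ (Theorem~\ref{zbc1}), using the conditioning relation $\PP_n^\gamma(\cdot)=\QQ_z^\gamma(\cdot\mypp|\CP_n)=\QQ_z^\gamma(\cdot)/\QQ_z^\gamma(\CP_n)$ together with the local limit theorem. The key point is that any ``bad'' event $A_n:=\{\varGamma\in\CP_n:d_{\mathcal L}(n_1^{-1}\varGamma,\gamma)>\varepsilon\}$ sits inside $\CP_n$, so
\begin{equation*}
\PP_n^\gamma(A_n)=\frac{\QQ_z^\gamma(A_n)}{\QQ_z^\gamma(\CP_n)}\le\frac{\QQ_z^\gamma\{\varGamma\in\CP:d_{\mathcal L}(n_1^{-1}\varGamma,\gamma)>\varepsilon\}}{\QQ_z^\gamma(\CP_n)}.
\end{equation*}
By Corollary~\ref{cor:Q} the denominator is $\asymp n_1^{-4/3}$, so it suffices to show the numerator is $o(n_1^{-4/3})$; i.e.\ I need a \emph{polynomial rate} in Theorem~\ref{zbc1}, not merely convergence to $0$.

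First I would revisit the proof of Theorem~\ref{zbc1} to extract such a rate. That proof bounds the probability of $\{\sup_t|\ell_\varGamma^{\myp0}(t)|>n_1\varepsilon\}$ by $O(n_1^{-2/3})$ via the Kolmogorov--Doob inequality with a second moment. To beat $n_1^{-4/3}$ I would instead use a higher moment: by the $L^q$ version of Doob's submartingale inequality,
\begin{equation*}
\QQ_z^\gamma\Bigl\{\sup_{0\le t\le\infty}|\ell_\varGamma^{\myp0}(t)|>n_1\varepsilon\Bigr\}\le\frac{C_q}{(n_1\varepsilon)^q}\,\EE_z^\gamma\bigl|\ell_\varGamma-\EE_z^\gamma(\ell_\varGamma)\bigr|^q,
\end{equation*}
and Lemma~\ref{lm:6.6} gives $\EE_z^\gamma|\ell_\varGamma-\EE_z^\gamma(\ell_\varGamma)|^q=O(n_1^{2q/3})$, so the right-hand side is $O(n_1^{-q/3})$. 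Choosing $q\ge 5$ makes this $o(n_1^{-4/3})$. Combined with Theorem~\ref{rs} (which controls the deterministic deviation $\sup_t|n_1^{-1}\EE_z^\gamma[\ell_\varGamma(t)]-\ell_\gamma(t)|\to0$, so that for large $n$ it is below $\varepsilon/2$), the triangle inequality in the metric $d_{\mathcal L}$ (recall $d_{\mathcal L}(\gamma_1,\gamma_2)=\sup_t|\ell_{\gamma_1}(t)-\ell_{\gamma_2}(t)|$) yields
\begin{equation*}
\QQ_z^\gamma\{\varGamma\in\CP:d_{\mathcal L}(n_1^{-1}\varGamma,\gamma)>\varepsilon\}=O(n_1^{-q/3})=o(n_1^{-4/3}).
\end{equation*}

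Putting the pieces together: divide by $\QQ_z^\gamma(\CP_n)\asymp n_1^{-4/3}$ to obtain $\PP_n^\gamma(A_n)=o(1)$, which is exactly \textup{(\ref{<e})}. The main obstacle, and the only substantive point, is making sure Lemma~\ref{lm:6.6} is available for an exponent $q$ large enough (here $q=5$ suffices, and the lemma is stated for all $q\in\NN$) and that the $L^q$-maximal inequality applies to the c\`adl\`ag martingale $\ell_\varGamma^{\myp0}(t)$ with independent increments on $[0,\infty]$ — which it does, by the same reasoning already used for $q=2$ in Theorem~\ref{zbc1}. Everything else is bookkeeping. Finally, since $d_{\mathcal L}$ and $d_{\mathcal H}$ are equivalent metrics (Corollary~\ref{cor:d-d1}), \textup{(\ref{<e})} immediately implies the statement of Theorem~\ref{th:main1} with $\tilde\varGamma_n=n_1^{-1}\varGamma$.
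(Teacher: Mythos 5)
Your proposal is correct and follows essentially the same route as the paper: bound $\PP_n^\gamma$ of the bad event by the ratio $\QQ_z^\gamma(\cdot)/\QQ_z^\gamma\{\xi=n\}$, use Corollary~\ref{cor:Q} for the denominator ($\asymp n_1^{-4/3}$), and beat it in the numerator via a higher-moment Kolmogorov--Doob maximal inequality combined with Lemma~\ref{lm:6.6} (the paper takes the sixth central moment, giving $O(n_1^{-2})$, whereas you take $q\ge5$ --- an immaterial difference).
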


\begin{proof}
Similarly to the proof of Theorem \ref{zbc1}, it suffices to show
that for each $\varepsilon>0$
\begin{align*}
\lim_{n\to\infty}\PP_n^{\gamma}\left\{\sup_{0\le t\le\infty}
|n_1^{-1}\ell^{\myp0}_{\varGamma}(t)|>\varepsilon\right\}=0,
\end{align*}
where the random process $\ell^{\myp0}_\varGamma(t)$ is defined in
(\ref{eq:tilde-l}). Recalling formula (\ref{u_1}), we obtain
\begin{align}\label{c/z}
\PP_n^{\gamma}\left\{\sup_{0\le
t\le\infty}|\ell^{\myp0}_{\varGamma}(t)|>\varepsilon n_1\right\}
&\le \frac{\displaystyle \QQ_z^{\gamma}\left\{\sup_{0\le
t\le\infty}|\ell^{\myp0}_{\varGamma}(t)|>\varepsilon n_1\right\}}
{\displaystyle \QQ_z^{\gamma}\{\xi=n\}}\myp.
\end{align}
To estimate the numerator in (\ref{c/z}), similarly to the proof of
Theorem \ref{zbc1} we use Kolmogorov--Doob's submartingale
inequality, but now with the sixth order central moment. Combining
this with Lemma \ref{lm:6.6} (with $q=3$), we obtain
\begin{align}\label{P<e^6}
\QQ_z^{\gamma}\left\{\sup_{0\le t\le\infty}
|\ell^{\myp0}_{\varGamma}(t)|> n_1\myp\varepsilon\right\}
&\le\frac{1}{n_1^6\myp\varepsilon^6}\,
\EE_z^{\gamma}\bigl|\ell_{\varGamma}-\EE_z^{\gamma}
(\ell_{\varGamma})\bigr|^6=O(n_1^{-2}).
\end{align}
On the other hand, by Corollary \ref{cor:Q} the denominator in
(\ref{c/z}) decays no faster than at order $n^{-4/3}$. Together with
(\ref{P<e^6}), this implies that the right-hand side of (\ref{c/z})
admits an asymptotic bound $O(n_1^{-2/3})\to0$. Hence, Theorem
\ref{zbc2} is proved.
\end{proof}

\section*{Acknowledgements}

The authors wish to thank Ya.\,G.\ Sinai for stimulating
discussions.

\end{document}